\theoremstyle{plain}
\newtheorem{thm}{Theorem}[section]
\newtheorem{lem}[thm]{Lemma}
\newtheorem{prop}[thm]{Proposition}
\newtheorem*{propA}{Proposition A}
\theoremstyle{definition}
\newtheorem{defn}{Definition}[section]
\theoremstyle{remark}
\newtheorem{remark}{Remark}
\newcommand{\R}{\mathbb{{R}}}
\newcommand{\N}{\mathbb{{N}}}
\newcommand{\Lp}{L^p}
\newcommand{\Ho}{H_{0}^{1}(\Omega)}
\newcommand{\Hoo}{H_{0}^{2}(\Omega)}
\newcommand{\Omegabar}{\overline{\Omega}}
\newcommand{\Caratheodory}{\text{Carath\'{e}odory} }
\newcommand{\dOmega}{{\partial\Omega}}
\newcommand{\B}{\mathcal B}
\newcommand{\dB}{{\partial \B}}
\newcommand{\omegar}{\omega_{\bar r}}
\newcommand{\varphitilde}{\tilde\varphi_1}
\newcommand{\lambdatilde}{\tilde\lambda_1}
\newcommand{\intOmega}{\int_\Omega}
\newcommand{\xki}{x_{k,i}}
\newcommand{\XI}{x^{(i)}}
\newcommand{\muki}{\mu_{k,i}}
\newcommand{\Ox}{\mathcal{O}_x}
\newcommand{\Oz}{\mathcal{O}_z}
\newcommand{\OR}{\mathcal{R}}
\newcommand{\lambdabar}{\bar{\lambda}}
\begin{document}
	\title[Uniform bounds for higher-order problems]{Uniform bounds for higher-order semilinear problems in conformal dimension}
	
	\author[G.~Mancini]{Gabriele Mancini}
	\address[G.~Mancini]{Universit\`{a} Sapienza di Roma, Dipartimento di Scienze di Base e Applicate per l'Ingegneria, via Antonio Scarpa 16, 00161 Roma, Italy}
	\email{gabriele.mancini@uniroma1.it}
	
	\author[G.~Romani]{Giulio Romani}
	\address[G.~Romani]{Institut f\"{u}r Mathematik, Martin-Luther-Universit\"{a}t Halle-Wittenberg, 06099 Halle (Saale), Germany}
	\email{giulio.romani@mathematik.uni-halle.de}
	
	\keywords{Higher-order elliptic problems, a-priori estimates, positive solutions, blow-up}
	\subjclass[2010]{35J40, 35B45 (primary), 35J91, 35B44 (secondary).}
	
\begin{abstract}
	\noindent We establish uniform a-priori estimates for solutions of the semilinear Dirichlet problem
	\begin{equation*}
		\begin{cases}
			(-\Delta)^mu=h(x,u)\quad&\mbox{in }\Omega,\\%
			u=\partial_nu=\cdots=\partial_n^{m-1}u=0\quad&\mbox{on }\dOmega,
		\end{cases}
	\end{equation*}
	where $h$ is a positive superlinear and subcritical nonlinearity in the sense of the Trudinger-Moser-Adams inequality, either when $\Omega$ is a ball or, provided an energy control on solutions is prescribed, when $\Omega$ is a smooth bounded domain.
	Our results are sharp within the class of distributional solutions. The analogous problem with Navier boundary conditions is also studied. Finally, as a consequence of our results, existence of a positive solution is shown by degree theory.
\end{abstract}
\maketitle

\section{Introduction and main results}
A-priori estimates play an important role in the theory of elliptic boundary value problems. In particular \textit{uniform} bounds, namely a-priori estimates in $L^\infty$ norm, are a key tool in order to show existence of solutions by degree theory.\\ 

Concerning second-order problems, the major contributions in this direction have been given by Brezis and Turner \cite{BT}, Gidas and Spruck \cite{GS} and de Figueiredo, Lions and Nussbaum \cite{dFLN}. They proved uniform bounds for positive classical solutions of the second-order semilinear Dirichlet problem 
\begin{equation}\label{DIRfsecondorder}
\begin{cases}
-\Delta u=h(x,u)\quad&\mbox{in }\Omega,\\%
u=0\quad&\mbox{on }\dOmega,
\end{cases}
\end{equation}
where $\Omega$ is a bounded smooth domain in $\R^N$, $N\geq2$. The main assumptions on the nonlinearity are positivity, superlinearity and a \textit{polynomial} subcritical growth at $\infty$, namely $0<f(x,s)<cs^p$ for some constant $c>0$ and $1<p<2^*-1$, where $2^*:=\frac{2N}{N-2}$ denotes the Sobolev critical exponent for $\Ho$. It is remarkable that the authors exploited different important techniques: Hardy-Sobolev inequalities in \cite{BT}, a blow-up method in \cite{GS}, a moving planes argument in \cite{dFLN}.

If we consider the problem \eqref{DIRfsecondorder} in the special case of $N=2$, then formally $2^*=\infty$ and one can replace the critical Sobolev growth $t\mapsto t^{2^*}$ with the \textit{exponential} map $t\mapsto e^{t^2}$, as the well-known Trudinger-Moser inequality shows:
\begin{align}\label{TM}
	\sup_{u\in H^1_0(\Omega),\;\|\nabla u\|_2\leq1}\int_\Omega e^{\beta|u|^2}dx\quad
	\begin{cases}
		\leq C|\Omega|,\quad&\mbox{if }\beta\leq 4\pi,\\
		=+\infty\quad&\mbox{if }\beta>4\pi.       
	\end{cases}
\end{align}
Hence one might expect that an uniform a-priori bound holds for nonlinearities up to this new critical growth. This question was investigated by Brezis and Merle in their seminal paper \cite{BM}. Indeed, as a consequence of a concentration-compactness result, the authors established local uniform bounds for solutions $(u_k)_k$ of the problems
\begin{equation}\label{BMeq}
\begin{cases}
-\Delta u=V_k(x)e^{u}\quad&\mbox{in}\,\Omega\subset\R^2,\\
u=0\quad&\mbox{on}\,\dOmega,
\end{cases}
\end{equation}
but the question about the existence of global a-priori bounds, considering for instance $0<a\leq V_k\leq b<+\infty$, even with the additional assumption $\|e^{u_k}\|_{L^1(\Omega)}\leq C$ for any $k$, was left open. However, at least for convex domains, the a-priori bound can be achieved thanks to some uniform boundary estimates in the already cited \cite{dFLN}; see also \cite{ChenLi}. In any case, as again the work of Brezis and Merle surprisingly shows, one may expect to find a-priori estimates for solutions in the uniform norm \textit{only} if the growth of the nonlinearity is at most $t\mapsto e^t$.
\vskip0.2truecm
If we consider now the higher-order counterpart of problem \eqref{DIRfsecondorder}, namely the Dirichlet problem
\begin{equation}\label{DIRf_generalpoly}
\begin{cases}
(-\Delta)^mu=h(x,u)\quad&\mbox{in }\Omega,\\%
u=\partial_nu=\cdots=\partial_n^{m-1}u=0\footnotemark\quad&\mbox{on }\dOmega,
\end{cases}
\end{equation}
\footnotetext{Here and in the sequel, the unit outer normal vector in $x$ is denoted by $n(x)$, thus $\partial_n u$ or alternatively $u_n$ stand for the normal derivative of $u$.}
\noindent \hspace{-0.43em}for $m\geq2$ and with a positive nonlinearity $h$ with power-growth at $\infty$, then the results in \cite{GS,dFLN} have been generalized to this context by Oswald \cite{Osw} and Soranzo \cite{Soranzo}, both dealing with positive radially symmetric solutions of \eqref{DIRf_generalpoly} for homogeneous nonlinearities in the ball. Some years later, Reichel and Weth \cite{RW} extended the theory admitting also nonpositive solutions, nonhomogeneous nonlinearities and general smooth domains, using a blow-up technique inspired by \cite{GS}. A growth condition in the spirit of \cite{GS} was required, namely that
\begin{equation}\label{GSgrowth}
\lim_{s\to\infty}\frac{h(x,s)}{s^p}=a(x)\qquad1<p<\tfrac{N+2m}{N-2m}=:2^*_{m}-1,\quad\,\mbox{uniformly in $\Omegabar$,}
\end{equation}
where $2^*_{m}$ is the critical Sobolev exponent for $H^m_0(\Omega)$ and $a>0$ in $\Omegabar$.\\
\indent In the special case of the conformal dimension $N=2m$, the well-known generalization of \eqref{TM} established by Adams \cite{Adams} for higher-order Sobolev spaces shows that the critical growth is still given by the map $t\mapsto e^{t^2}$. The fourth-order counterpart of the aforementioned concentration-compactness result in \cite{BM} was proved in \cite{ARS} highlighting also some new features, and then extended in the general context in \cite{Mart_cc}. In particular, this provides local uniform bounds in case of positive solution of the polyharmonic analogue of problem \eqref{BMeq}.
\vskip0.2truecm
Here, we want to complement these last results and investigate more deeply problem \eqref{DIRf_generalpoly} in the case of the conformal dimension, considering positive superlinear nonlinearities up to the map $t\mapsto e^t$. More precisely, we shall prove that if the nonlinearity $t\mapsto h(x,t)$ is controlled in a suitable sense by $t\mapsto e^{\gamma t}$ for some $\gamma>0$, then the a-priori bound holds for any solution of problem \eqref{DIRf_generalpoly}.

It is worth to mention that a related problem was also addressed in \cite{LRU}, where the operator involved is the $N$-laplacian in dimension $N$ and the nonlinearity either is growing less than $e^{t^\alpha}$ for some $\alpha\in(0,1)$, or behaves like $e^t$. The authors use Orlicz spaces techniques to cover the first alternative and some arguments inspired by Brezis and Merle for the second case.
Although in his Ph.D. Thesis \cite[Chapter 6]{Pass} Passalacqua improved the result, the gap between the growths $e^{t^\alpha}$ and $e^t$ was not completely filled: for instance, the growth $f(t)=e^t(1+t)^{-\alpha}$ with $\alpha>1$ was not allowed. Covering the remaining cases seems not attainable with such techniques. Let us also mention that a similar gap occurs also when dealing with coupled elliptic systems in critical dimension, see \cite{dFdOR}.

Here, instead, we manage to deal with all these nonlinearities applying a blow-up strategy inspired mainly by the work of Robert and Wei \cite{RobWei}. We may sketch our main argument as follows. Firstly we obtain that $\int_\Omega h(x,u)dx$ is uniformly bounded for any solution of the problem \eqref{DIRf_generalpoly}. Here the main obstruction is the lack of a maximum principle and good Green function estimates. Then, supposing the existence of an unbounded sequence of solutions, we appropriately rescale them and find a limit problem in the whole $\R^{2m}$. The $L^1$ control previously obtained yields a characterization of the limit profile and, furthermore, a contradiction with the concentration of the energy around each blow-up point.

\vskip0.3truecm

\noindent Throughout all the paper, the nonlinearity $h:\Omega\times\R\to\R^+$ is a \Caratheodory  function satisfying the following conditions:
\begin{enumerate}[H1)]
	\item $h\in L^\infty(\Omega\times[0,\tau])$ for all $\tau\in\R^+$; 
	\item there exist functions $f\in C^1([0,+\infty))$ satisfying assumption (\textit{A}) below and $0<a\in L^\infty(\Omega)\cap C(\Omega)$ such that $$\lim_{t\rightarrow+\infty}\frac{h(x,t)}{a(x)f(t)}=1\qquad\mbox{uniformly in}\,\Omega.$$
\end{enumerate}

\begin{defn}
	A function $f\in C^1([0,+\infty))$ satisfies \textit{assumption (A)} if
	\begin{enumerate}[\text{A}1)]
		\item $f>0$ and $f'\big|_{[M,+\infty)}\geq0$ for some $M>0$;
		\item $f$ is superlinear at $\infty$, that is, $\displaystyle\lim_{t\to+\infty}{\tfrac{f(t)}{t}}=+\infty$;
		\item there exists $\displaystyle\lim_{t\to+\infty}{\tfrac{f'(t)}{f(t)}}\in[0,+\infty)$.
	\end{enumerate}
\end{defn}

The assumption (\textit{H2}) may be seen as a generalization of \eqref{GSgrowth}, as we are prescribing a sort of separation of variables at $\infty$, and one retrieves that form provided the potential $a$ stays bounded from 0 near the boundary. However, it is worth to stress here that the potential $a$ might also vanish at $\dOmega$ and, more important, that our analysis is not restricted to a precise profile at $\infty$ as in \cite{Soranzo,RW}, but we include \textit{almost any} growth in $t$ which is, in some sense, controlled from above by $e^{\gamma t}$ for some $\gamma>0$. Indeed, it is possible to show that assumption (\textit{A3}) is equivalent to require a control from above by a suitable exponential function (see Lemma \ref{fexpbeta_lem} below).

\begin{remark}
	\noindent From (\textit{H1})-(\textit{H2}), it follows that for any $\varepsilon>0$ there exists a constant $d_\varepsilon$ such that
	\begin{equation}\label{asymptotic_GROWTH}
	(1-\varepsilon)a(x)f(t)-d_\varepsilon\leq h(x,t)\leq (1+\varepsilon)a(x)f(t)+d_\varepsilon\quad\;\mbox{for all \,$t\geq0$, $x\in\Omega$}.
	\end{equation}
\end{remark}
Although our main result applies for all nonlinearities, it will be useful in the sequel to distinguish among the admissible growths, in dependence of how far they are from being exponential, according to the following definition.

\begin{defn}
	Let $h$ satisfy assumption (\textit{H2}) and $f$ be as therein. We say that $h$ is \textit{subcritical} if
	\begin{equation}\label{subcritical}
		\lim_{t\to+\infty}{\frac{f'(t)}{f(t)}}=0.
	\end{equation}
	On the other hand, we say that $h$ is \textit{critical} if
	\begin{equation}\label{quasicritical}
	\lim_{t\to+\infty}{\frac{f'(t)}{f(t)}}\in(0,+\infty).
	\end{equation}
\end{defn}
\noindent Roughly speaking, the behaviour of the nonlinearities of \textit{subcritical} type largely differs from the exponential map. An example is $h(x,t)=a(x)\log^{\theta}(t+1)t^p e^{t^\alpha}$ with $\theta\geq 0$, $p>1$, $\alpha\in[0,1)$ and $a$ as in (\textit{H2}). The class of \textit{critical} functions gathers instead maps which are quite close to $e^t$ with respect to the second variable, not affecting too much its exponential behaviour; model nonlinearities for this case are $h(x,t)=a(x)\frac{e^{\gamma t}}{(t+1)^q}$ with $q\in\R$, $\gamma>0$. These behaviours are clarified by the following lemma.
\begin{lem}\label{fexpbeta_lem}
	Let $f$ be a nonlinearity satisfying assumption (\textit{A}) and denote $\beta:=\displaystyle\lim_{t\to+\infty}{\tfrac{f'(t)}{f(t)}}$. Then for any $\varepsilon>0$ there are constants $C_\varepsilon,D_\varepsilon>0$ such that
	\begin{equation}\label{fexpbeta}
	D_\varepsilon e^{(\beta-\varepsilon)t}-C_\varepsilon\leq f(t)\leq D_\varepsilon e^{(\beta+\varepsilon)t}+C_\varepsilon.
	\end{equation}
\end{lem}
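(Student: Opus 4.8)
The plan is to derive the two-sided exponential bound directly from the definition $\beta=\lim_{t\to+\infty}f'(t)/f(t)$ by integrating the logarithmic derivative of $f$. Fix $\varepsilon>0$. By definition of the limit there exists $T_\varepsilon\geq M$ (with $M$ as in \textit{(A1)}, so that $f>0$ on $[M,+\infty)$) such that
\begin{equation*}
\beta-\varepsilon\leq \frac{f'(t)}{f(t)}\leq \beta+\varepsilon\qquad\text{for all }t\geq T_\varepsilon.
\end{equation*}
Since $f>0$ on $[T_\varepsilon,+\infty)$, the function $t\mapsto\log f(t)$ is well-defined and $C^1$ there, with derivative $f'(t)/f(t)$. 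Integrating the inequality above from $T_\varepsilon$ to $t$ gives $(\beta-\varepsilon)(t-T_\varepsilon)\leq \log f(t)-\log f(T_\varepsilon)\leq(\beta+\varepsilon)(t-T_\varepsilon)$, hence, exponentiating,
\begin{equation*}
f(T_\varepsilon)e^{-(\beta-\varepsilon)T_\varepsilon}\,e^{(\beta-\varepsilon)t}\leq f(t)\leq f(T_\varepsilon)e^{-(\beta+\varepsilon)T_\varepsilon}\,e^{(\beta+\varepsilon)t}\qquad\text{for all }t\geq T_\varepsilon.
\end{equation*}

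The next step is to absorb the behaviour on the compact interval $[0,T_\varepsilon]$ into additive constants. On $[0,T_\varepsilon]$ the continuous function $f$ is bounded, say $0\leq f(t)\leq \|f\|_{L^\infty([0,T_\varepsilon])}=:A_\varepsilon$; this takes care of the upper bound there, since $f(t)\leq A_\varepsilon\leq A_\varepsilon+D_\varepsilon e^{(\beta+\varepsilon)t}$ trivially. For the lower bound on $[0,T_\varepsilon]$ one simply notes $f(t)\geq 0\geq D_\varepsilon e^{(\beta-\varepsilon)t}-C_\varepsilon$ as soon as $C_\varepsilon$ is chosen at least $D_\varepsilon e^{(\beta-\varepsilon)T_\varepsilon}$ (or $D_\varepsilon$ if $\beta-\varepsilon\leq 0$). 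Thus, setting
\begin{equation*}
D_\varepsilon:=\min\bigl\{f(T_\varepsilon)e^{-(\beta-\varepsilon)T_\varepsilon},\,f(T_\varepsilon)e^{-(\beta+\varepsilon)T_\varepsilon},\,1\bigr\},
\end{equation*}
and then choosing $C_\varepsilon>0$ large enough (depending on $D_\varepsilon$, $T_\varepsilon$, $\beta$, $\varepsilon$ and $A_\varepsilon$) so that both $D_\varepsilon e^{(\beta-\varepsilon)t}-C_\varepsilon\leq 0\leq f(t)$ and $f(t)\leq A_\varepsilon\leq D_\varepsilon e^{(\beta+\varepsilon)t}+C_\varepsilon$ hold on $[0,T_\varepsilon]$, while on $[T_\varepsilon,+\infty)$ the bounds from the integration above give \eqref{fexpbeta} (possibly after replacing $D_\varepsilon$ by the displayed minimum and noting the one-sided inequalities only weaken when we shrink the coefficient of the lower exponential and enlarge that of the upper one — here one should be slightly careful about whether $\beta-\varepsilon$ is positive or negative, but in either case enlarging $C_\varepsilon$ fixes the finitely many problematic $t$). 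This yields \eqref{fexpbeta}.

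Strictly speaking there is no genuine obstacle: the statement is essentially a Gronwall/ODE-comparison computation, and the only mildly delicate point is bookkeeping the constants so that a \emph{single} pair $C_\varepsilon,D_\varepsilon$ works on all of $[0,+\infty)$ rather than separately on $[0,T_\varepsilon]$ and $[T_\varepsilon,+\infty)$; this is handled by taking $D_\varepsilon$ as the minimum of the three quantities above and then $C_\varepsilon$ correspondingly large. A cleaner alternative, which I would actually prefer to write, is to prove the one-sided estimates for $t\geq T_\varepsilon$ first and then simply remark that, since $f$ is bounded on $[0,T_\varepsilon]$, enlarging the additive constant makes the inequalities valid for all $t\geq 0$; this avoids the explicit minimum and keeps the argument transparent. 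Note also that in the subcritical case $\beta=0$ the lemma reduces to saying $f$ grows slower than any exponential $e^{\varepsilon t}$ (and the lower bound is vacuous up to constants), while in the critical case it pins $f$ between $e^{(\beta-\varepsilon)t}$ and $e^{(\beta+\varepsilon)t}$, matching the model nonlinearities discussed above; these are worth recording as immediate consequences but need no separate proof.
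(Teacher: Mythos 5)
Your proposal is correct and follows essentially the same route as the paper: fix $T_\varepsilon$ from the definition of $\beta$, integrate the two-sided differential inequality $(\beta-\varepsilon)f\leq f'\leq(\beta+\varepsilon)f$ on $[T_\varepsilon,t]$ (you do this via the logarithmic derivative, the paper via Gronwall's lemma applied to the integrated inequality --- the same computation in this linear setting), and absorb the compact interval $[0,T_\varepsilon]$ into the additive constants. The only delicate point you flag, arranging a \emph{single} $D_\varepsilon$ for both sides, is handled no more carefully in the paper (which takes a maximum where your minimum has the symmetric defect); in both cases the clean fix is to first apply the estimate with $\varepsilon/2$ so that the exponential with exponent $\beta\pm\varepsilon$ dominates, respectively is dominated by, $f$ outside a compact set.
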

\begin{proof}
	By definition of $\beta$, for any $\varepsilon>0$ there exists $T_\varepsilon>0$ such that
	$$(\beta-\varepsilon)f(t)\leq f'(t)\leq (\beta+\varepsilon)f(t)$$
	for any $t>T_\varepsilon$. Integrating then on $[T_\varepsilon,t]$, we get
	$$(\beta-\varepsilon)\int_{T_\varepsilon}^tf(s)ds+f(T_\varepsilon)\leq f(t)\leq (\beta+\varepsilon)\int_{T_\varepsilon}^tf(s)ds+f(T_\varepsilon).$$
	By Gronwall lemma from the second inequality we infer
	$f(t)\leq D_\varepsilon^+ e^{(\beta+\varepsilon)t}$ where $D_\varepsilon^+:=f(T_\varepsilon)e^{-(\beta+\varepsilon)T_\varepsilon}$ and, since $f$ is positive and bounded in $[0,T_\varepsilon]$, then we may set $C_\varepsilon^+:=\max_{[0,T_\varepsilon]} f$.
	The first inequality in \eqref{fexpbeta} is proved similarly, applying Gronwall lemma to $-f$, showing $f(t)\geq D_\varepsilon^- e^{(\beta-\varepsilon)t}-C_\varepsilon^-$. Then it is enough to define $D_\varepsilon:=\max\{D_\varepsilon^+,D_\varepsilon^-\}$ and $C_\varepsilon:=\max\{C_\varepsilon^+,C_\varepsilon^-\}$.
\end{proof}
\vskip0.2truecm

\noindent We mainly focus on the biharmonic problem 
\begin{equation}\label{DIRf}\tag{P}
	\begin{cases}
		\Delta^2u=h(x,u)\quad&\mbox{in }\Omega\subset\R^4,\\%
		u=u_n=0\quad&\mbox{on }\dOmega,
	\end{cases}
\end{equation}
\noindent that is \eqref{DIRf_generalpoly} with $m=2$, addressing in Section \ref{SectionPOLY} the discussion of the general problem \eqref{DIRf_generalpoly}, the argument being similar.

\begin{defn}
	We say $u$ is a \textit{weak solution} of \eqref{DIRf} if $u\in\Hoo$ and, for every $\varphi\in\Hoo$:
	\begin{equation}\label{def}
	\int_\Omega\Delta u\Delta\varphi=\int_\Omega h(x,u)\varphi.
	\end{equation}
\end{defn}

\noindent Notice that that \eqref{asymptotic_GROWTH}, \eqref{fexpbeta} and the Adams inequality \cite{Adams} imply that $h(\cdot,u(\cdot))\in L^p(\Omega)$ for any $u\in H^2_0(\Omega)$, $p\ge 1$.
\vskip0.2truecm
The following assumption will be needed to control the behaviour of solutions near the boundary. 
\begin{enumerate}[H3)]
	\item there exist $\bar r,\bar\delta>0$ such that
	\begin{enumerate}[H3a)]
		\item $h(x,\cdot)$ is increasing for $t\geq 0$ for all $x\in\omega_{\bar r}:=\{x\in\ \B\,|\,d(x,\partial\B)<\bar r\}$;
		\item $h(\cdot,t)\in C^1(\omega_r)$ for all $t\geq 0$ and $\nabla_xh(x,t)\cdot\theta\leq 0$ for all $x\in\omega_{\bar r}$, $t\geq0$ and unit vectors $\theta$ such that $|\theta-n(x)|\leq\bar\delta$.
	\end{enumerate}
\end{enumerate}

\noindent Our main results may be then summarized as follows:
\begin{thm}\label{uniformboundblowupBALL}
	Let $\B$ be a ball in $\R^4$ and $h$ satisfy (\textit{H1})-(\textit{H2})-(\textit{H3}). Suppose moreover that one of the following holds:
	\begin{enumerate}[i)]
		\item $a(\cdot)\geq a_0>0$;
		\item $\displaystyle\lim_{t\to+\infty}\tfrac{f(t)}{t^\alpha}=+\infty$, for some $\alpha>1$.
	\end{enumerate}
	Then there exists $C>0$ such that $\|u\|_{L^\infty(\B)}\leq C$ for all weak solutions of \eqref{DIRf}.
\end{thm}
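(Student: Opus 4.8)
The plan is to follow the two-step scheme of \cite{RobWei}: first establish a uniform bound on the mass $\int_\B h(x,u)\,dx$, and then exclude unbounded sequences of solutions by a blow-up analysis. Since $\B$ is a ball, Boggio's formula gives the positivity of the Green function $G$ of $\Delta^2$ under homogeneous Dirichlet conditions; hence every weak solution satisfies $u=\int_\B G(\cdot,y)h(y,u)\,dy\ge0$, and since $h(\cdot,u)\in L^p(\B)$ for all $p$ (by \eqref{asymptotic_GROWTH}, Lemma~\ref{fexpbeta_lem} and Adams' inequality), elliptic regularity gives $u\in C^{3,\gamma}(\overline\B)\cap C^\infty(\B)$. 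So weak solutions are classical and nonnegative, and we may argue pointwise.

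\emph{Step 1 — the $L^1$ bound.} Let $\varphi_1>0$ be the positive first Dirichlet eigenfunction of $\Delta^2$ in $\B$ (positive by Boggio), with eigenvalue $\lambda_1$, so $\varphi_1\asymp d(\cdot,\dB)^2$. Testing \eqref{def} with $\varphi_1$ yields $\int_\B h(x,u)\varphi_1=\lambda_1\int_\B u\varphi_1$. By the superlinearity of $f$ and \eqref{asymptotic_GROWTH}, for every $K$ there is $C_K$ with $h(x,t)\ge Kt-C_K$ for $t\ge0$: immediately under (i) since $a\ge a_0>0$, whereas under (ii) the super-polynomial growth $f(t)\gg t^\alpha$, $\alpha>1$, absorbs a possible vanishing of $a$ near $\dB$ via Hölder's inequality against $\varphi_1$. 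Taking $K>\lambda_1$ gives $\int_\B u\varphi_1\le C$, hence the weighted bound $\int_\B h(x,u)\,d(x,\dB)^2\,dx\le C$. To remove the weight one invokes (H3): a moving-plane argument on the ball — the monotonicity conditions (H3a)--(H3b) furnishing the correct sign of $\Delta^2(u-u^\sigma)$ on caps adjacent to $\dB$, equivalently the positivity together with the sharp two-sided estimates of Boggio's kernel — produces a fixed neighbourhood $\omega$ of $\dB$ and $C_0>0$ with $\|u\|_{L^\infty(\omega)}\le C_0$ for all solutions. Then $\int_\omega h(x,u)\le\|h\|_{L^\infty(\B\times[0,C_0])}|\omega|$, while on $\B\setminus\omega$ the weight is bounded away from $0$; adding, $\int_\B h(x,u)\,dx\le C$.

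\emph{Step 2 — the limit profile.} Suppose, by contradiction, there are solutions $u_k$ with $M_k:=\|u_k\|_{L^\infty(\B)}=u_k(x_k)\to+\infty$. By Step 1, $x_k\in\B\setminus\omega$, so $x_k\to x_0\in\B$ with $a(x_0)>0$. Put $\mu_k:=f'(M_k)/f(M_k)\to\beta:=\lim_{t\to\infty}f'(t)/f(t)$ and choose $r_k\to0$ by $\mu_k\,r_k^4\,a(x_0)\,f(M_k)=1$. Then
\[
v_k(y):=\mu_k\bigl(u_k(x_k+r_ky)-M_k\bigr),\qquad y\in\B_k:=r_k^{-1}(\B-x_k),
\]
satisfies $v_k\le0=v_k(0)$ and $0\le\Delta^2 v_k\le C$ on $\B_k$ (using the monotonicity of $f$ and the normalisation of $r_k$). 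Writing $v_k$ as the logarithmic potential of $\Delta^2 v_k$ (bounded in $C^{3,\gamma}_{\rm loc}$) plus a biharmonic remainder — which is bounded above on compacts and finite at $0$, hence locally bounded — elliptic estimates yield $v_k\to v$ in $C^4_{\rm loc}(\R^4)$; using (H2), Lemma~\ref{fexpbeta_lem} and the identity $\log\frac{f(M_k+s/\mu_k)}{f(M_k)}=\int_0^s\frac{(f'/f)(M_k+\xi/\mu_k)}{(f'/f)(M_k)}\,d\xi$, the limit solves $\Delta^2 v=e^v$ in $\R^4$ — except when the nonlinearity is of \emph{polynomial type}, i.e.\ $\ell:=\lim_t tf'(t)/f(t)<\infty$ (so $\ell>1$): there $v_k\ge-\ell$ by $u_k\ge0$, $\Delta^2 v=(1+v/\ell)^\ell$, and $w:=1+v/\ell$ is a nonnegative, bounded, nonconstant solution of $\Delta^2 w=\ell^{-1}w^\ell$ in $\R^4$, contradicting the Liouville theorem for the polyharmonic Lane--Emden equation in the conformal dimension (cf.\ \cite{RW}). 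So from now on $\Delta^2 v=e^v$ in $\R^4$, $v\le0=v(0)$; in particular $\int_{B_R}e^{v_k}\,dy\to\int_{B_R}e^v>0$ for every $R\ge1$.

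\emph{Step 3 — conclusion.} A change of variables gives $\int_{B_{Rr_k}(x_k)}h(x,u_k)\,dx=\bigl(1+o(1)\bigr)\mu_k^{-1}\int_{B_R}e^{v_k}\,dy$; since $\int_{B_R}e^{v_k}\to\int_{B_R}e^v\ge\int_{B_1}e^v>0$ for $R\ge1$ while $\int_\B h(x,u_k)\,dx\le C$, we get $\mu_k^{-1}\le C'$, hence $\beta>0$, $v$ has finite mass, and the mass of $h(\cdot,u_k)$ concentrating at $x_0$ equals $\beta^{-1}\int_{\R^4}e^v$. This already rules out the \emph{subcritical} case $\beta=0$. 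In the \emph{critical} case $\beta>0$, one shows — here lies the main difficulty — that $\int_{\R^4}e^v=64\pi^2$: the uniform control of $\int_{\B_k}e^{v_k}$ excludes, by a Pohozaev identity on the rescaled balls as in \cite{RobWei}, the ``deficient'' (non-spherical) profiles, so $v$ is the standard bubble. Consequently $u_k\to u_\infty$ in $C^4_{\rm loc}(\B\setminus S)$ for a finite set $S\ni x_0$, and the weak-$*$ limit of the uniformly bounded measures $h(x,u_k)\,dx$ is a \emph{finite} measure $\mu_\infty=h(\cdot,u_\infty)\,dx+\sum_{x_i\in S}m_i\delta_{x_i}$ with $m_i\ge64\pi^2/\beta$; in particular $h(\cdot,u_\infty)\in L^1(\B)$. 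But $u_\infty=\int_\B G(\cdot,y)\,d\mu_\infty(y)\ge m_i\,G(\cdot,x_i)$, and $G(x,x_i)\ge\tfrac1{8\pi^2}\log\tfrac1{|x-x_i|}-C$ near $x_i$, whence $u_\infty(x)\ge\tfrac8\beta\log\tfrac1{|x-x_i|}-C$; then \eqref{asymptotic_GROWTH} and Lemma~\ref{fexpbeta_lem} give, for $\varepsilon\in(0,\beta/2)$,
\[
h(x,u_\infty)\ \ge\ c_\varepsilon\,a(x)\,e^{(\beta-\varepsilon)u_\infty}-C_\varepsilon\ \ge\ c\,|x-x_i|^{-(8-8\varepsilon/\beta)}-C\qquad\text{near }x_i,
\]
which is not integrable over any neighbourhood of $x_i$ in $\R^4$ — contradicting $h(\cdot,u_\infty)\in L^1(\B)$. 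In all cases $\|u_k\|_{L^\infty(\B)}$ is bounded, proving Theorem~\ref{uniformboundblowupBALL}. The two steps carrying the bulk of the work are the $L^1$ bound — where the absence of a maximum principle for $\Delta^2$ is overcome only thanks to Boggio's formula on the ball, and where (i)/(ii) and (H3) are exactly what is needed to control solutions up to $\dB$ — and, in the critical regime, the exclusion of the deficient blow-up profiles, which depends on sharp pointwise and energy estimates in the neck region, in the spirit of \cite{RobWei}.
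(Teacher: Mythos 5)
Your overall architecture coincides with the paper's: eigenfunction test plus moving planes for the boundary control and the $L^1$ bound, then blow-up, Lin's classification, quantization $\theta\beta=64\pi^2$, and a contradiction from the Green-function lower bound $u_\infty\geq \theta G(\cdot,x_i)$ making $h(\cdot,u_\infty)$ non-integrable (the paper phrases this as $\int e^{(\beta-\varepsilon)\phi}=+\infty$ versus $\le C(\Lambda)$, but it is the same exponent $8>4$). Steps 1 and 3 are essentially the paper's proof, modulo the fact that under hypothesis (ii) your ``H\"older against $\varphi_1$'' hides the real ingredient, namely the pointwise lower bound $G_\B(x,y)\geq c\,d_\B(x)^2d_\B(y)^2$ and the resulting inequality $u\geq c\varphi_1\int_\B\varphi_1 h$, which is what actually absorbs the possible vanishing of $a$ at $\dB$.

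The genuine gap is in Step 2, and it comes from your choice of normalization. You set $\mu_k:=f'(M_k)/f(M_k)$ and define $v_k=\mu_k(u_k(x_k+r_ky)-M_k)$ so that the limit equation is always $\Delta^2v=e^v$. Identifying that limit requires, via your own identity, that
\begin{equation*}
\frac{(f'/f)(M_k+\xi/\mu_k)}{(f'/f)(M_k)}\longrightarrow 1\qquad\text{uniformly for }\xi\text{ in compacta.}
\end{equation*}
In the critical case $\beta>0$ this is automatic from (\textit{A3}). In the subcritical case $\beta=0$ it is a \emph{self-neglecting} (regular-variation type) property of $f'/f$ over increments of length $\sim 1/\mu_k\to\infty$, which does \emph{not} follow from (\textit{A3}): one can build $f$ satisfying (\textit{A}) with $f'/f\to0$ oscillating on that scale, for which your ratio does not converge, and then neither the limit equation nor the lower bound $\liminf\int_{B_R}e^{v_k}>0$ used in Step 3 to force $\mu_k^{-1}\le C$ is justified. (There is also the minor defect that $f'(M_k)$ may vanish along the sequence, making $\mu_k=0$.) The paper sidesteps all of this by scaling only in space, $\mu_k=f(M_k)^{-1/4}$, $v_k=u_k(x_k+\mu_kx)-M_k$, and Taylor-expanding $\log f$ at $M_k$: the limit is $\Delta^2v=a(x_\infty)e^{\beta v}$, which for $\beta=0$ is the constant $a(x_\infty)>0$, immediately contradicting $\int_{\R^4}a(x_\infty)=+\infty$ against the finite energy. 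This also removes your separate ``polynomial type'' branch and its appeal to a Lane--Emden Liouville theorem, which is an unnecessary (and not exhaustive) case distinction. Finally, in the critical case your claim that the non-spherical (``deficient'') profiles are excluded ``by a Pohozaev identity on the rescaled balls'' is not the actual mechanism and is not substantiated: what is needed is the estimate $\int_{B_R}|\Delta v_k|\le CR^2$ coming from the Green-function bounds (Lemma \ref{MPlemma}), which combined with Lin's dichotomy yields $v(x)=o(|x|^2)$ and hence the spherical profile via \cite{Lin}. You should replace the Pohozaev remark by this argument.
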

\indent Let us now define
\begin{equation}\label{F}
F(t):=\int_{0}^{t}f(s)ds\qquad\mbox{and}\qquad H(x,t):=\int_{0}^{t}h(x,s)ds.
\end{equation}

\begin{thm}\label{uniformboundblowupOMEGA}
	Let $\Omega\subset\R^4$ be a bounded $C^{4,\gamma}$ smooth domain, $\gamma\in(0,1)$ and $h$ satisfy (\textit{H1})-(\textit{H2}) with $0<a_0\leq a(\cdot)\in C(\Omegabar)$. Assume that one of the following conditions holds:
	\begin{enumerate}[\indent 1)]
		\item $h$ is subcritical according to \eqref{subcritical};
		\item $h$ is critical according to \eqref{quasicritical} and there exists a neighborhood $\omega$ of $\dOmega$ and functions $0\leq B\in L^\infty(\omega)$, $0\leq D\in L^1(\omega)$, such that
		\begin{equation}\tag{H4}\label{hpnablaH}
			|\nabla_x H(x,t)|\leq B(x)F(t)+D(x),\quad\mbox{for any}\quad t\geq0,\,x\in\omega.
		\end{equation}
	\end{enumerate}
	Then, for any $\Lambda>0$ there exists $C>0$ depending on $\Lambda$ such that  $\|u\|_{L^\infty(\Omega)}\leq C$ for any solution of \eqref{DIRf} such that $\int_\Omega h(x,u)dx\leq\Lambda$.
\end{thm}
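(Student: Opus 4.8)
The plan is to argue by contradiction through a blow-up analysis in the spirit of Robert--Wei \cite{RobWei}. Suppose there exist $\Lambda>0$ and weak solutions $u_k$ of \eqref{DIRf} with $\int_\Omega h(x,u_k)\,dx\le\Lambda$ but $\|u_k\|_{L^\infty(\Omega)}\to+\infty$; since the lower bound $\inf_\Omega u_k\ge -C$ follows from a simpler, concentration-free variant of the same argument, we may assume $M_k:=\max_\Omega u_k=u_k(x_k)\to+\infty$ and, up to a subsequence, $x_k\to x_0\in\Omegabar$. First I would record that each $u_k$ is classical: by \eqref{asymptotic_GROWTH}, Lemma \ref{fexpbeta_lem} and the Adams inequality, $h(\cdot,u_k)\in L^p(\Omega)$ for every $p<\infty$, so $L^p$- and Schauder estimates (recall $\dOmega\in C^{4,\gamma}$) give $u_k\in C^4(\Omegabar)$. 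Writing $u_k=\int_\Omega G(\cdot,y)\,h(y,u_k(y))\,dy$, where $G$ is the Dirichlet Green function of $\Delta^2$ — which in the conformal dimension has an at most logarithmic diagonal singularity — the bound $\int_\Omega h(\cdot,u_k)\le\Lambda$ yields $(u_k)$ bounded in $L^q(\Omega)$ for all $q<\infty$, and, up to a subsequence, $h(\cdot,u_k)\,dx\rightharpoonup\mu$ weakly-$*$ as measures on $\Omegabar$, $\mu(\Omegabar)\le\Lambda$. Letting $\Lambda_1$ denote the mass of the standard entire solution of $\Delta^2v=e^v$ on $\R^4$ and $S:=\{x\in\Omegabar:\mu(\{x\})\ge\Lambda_1\}$, I would adapt the fourth-order Brezis--Merle concentration--compactness analysis of \cite{ARS,Mart_cc} to the class (\textit{H1})--(\textit{H2}) — using \eqref{asymptotic_GROWTH} and Lemma \ref{fexpbeta_lem} to reduce to the model exponential nonlinearity — to conclude that $S$ is finite, $(u_k)$ is locally bounded on $\Omegabar\setminus S$, and, since $M_k\to\infty$, $x_0\in S\ne\emptyset$.

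Next I would localize at $x_0$ and rescale. Set $\sigma_k:=f'(M_k)/f(M_k)\to\beta$, choose $\mu_k\to0^+$ by $\mu_k^{-4}=\sigma_k\,a(x_k)\,f(M_k)$, and put $v_k(y):=\sigma_k\big(u_k(x_k+\mu_k y)-M_k\big)$ for $|y|<d(x_k,\dOmega)/\mu_k$ (in the polynomial-type subcritical regime, i.e.\ when $\sigma_kM_k\not\to\infty$, one uses instead $v_k(y):=u_k(x_k+\mu_k y)/M_k$ with $\mu_k^{-4}=h(x_k,M_k)/M_k$; the two cases run parallel). Then $v_k\le0=v_k(0)=\max v_k$ and $\Delta^2 v_k(y)=\sigma_k\mu_k^4\,h\big(x_k+\mu_k y,\,M_k+v_k(y)/\sigma_k\big)$. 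Using (\textit{H2}), \eqref{asymptotic_GROWTH} and assumption (\textit{A3}) — through the limit $f(M_k+t/\sigma_k)/f(M_k)\to e^{t}$, obtained by integrating $f'/f$ over the bounded interval with endpoints $M_k$ and $M_k+t/\sigma_k$ — together with interior $\Delta^2$-estimates (the right-hand side being bounded in $L^\infty_{loc}$ once $v_k$ is bounded below on compacta, which is where the biharmonic Brezis--Merle lemma of \cite{ARS} enters), I expect $v_k\to v$ in $C^3_{loc}$, with $\Delta^2v=e^v$, $v(0)=\max v=0$ and $\int e^v<\infty$, on $\R^4$ if $d(x_k,\dOmega)/\mu_k\to+\infty$ (interior blow-up) and on a half-space $\mathbb H$ with $v=\partial_n v=0$ on $\partial\mathbb H$ otherwise (boundary blow-up). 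Moreover, for every fixed $R>0$, $\int_{B_R}e^{v_k}\to\int_{B_R}e^v$ while $\int_{B_R}e^{v_k}\le\sigma_k\int_{B_{R\mu_k}(x_k)}h(z,u_k)\,dz\le\sigma_k\Lambda$.

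If $h$ is \textit{subcritical} then $\sigma_k\to0$, and one reaches the absurdity $0<\int_{B_R}e^v=\lim_k\int_{B_R}e^{v_k}\le\Lambda\lim_k\sigma_k=0$ (the left inequality because $v(0)=0$); the same computation — on $B_R\cap\mathbb H$, or with $\Lambda/M_k$ in the polynomial regime, where the limiting right-hand side is still positive at $y=0$ — also rules out boundary blow-up, so case 1) is settled. If $h$ is \textit{critical} then $\sigma_k\to\beta>0$ and this only gives $\int_{\R^4}e^v\le\beta\Lambda$, so more is needed. Boundary blow-up I would exclude via a localized Pohozaev identity: testing $\Delta^2u_k=h(x,u_k)$ against $(x-x_k)\cdot\nabla u_k$ on $B_\delta(x_k)\cap\Omega$, the bulk term $\big(\tfrac N2-m\big)\int u_kh$ vanishes since $N=2m=4$, leaving $\int_{B_\delta(x_k)\cap\Omega}\!\big(4H(x,u_k)+(x-x_k)\cdot\nabla_xH(x,u_k)\big)\,dx$ balanced against definite-sign boundary terms on $\dOmega$ carrying $(\partial_n^2u_k)^2$ and remainders on $\partial B_\delta(x_k)$; here the gradient term is controlled by (\textit{H4}) and $H$ by $h$ — hence by $\Lambda$ — because $F\le Cf$, which holds precisely in the critical regime by Lemma \ref{fexpbeta_lem}. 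Passing to the limit $k\to\infty$, then $\delta\to0$, and comparing with the corresponding identity for the half-space profile $v$, whose clamped conditions force an incompatible sign, gives the contradiction. (For the ball, Theorem \ref{uniformboundblowupBALL}, this step is carried out instead from (\textit{H3}) by a moving-planes/Kelvin-transform argument, which is why (\textit{H4}) is not needed there.)

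The remaining, and most delicate, case is \textit{critical interior blow-up}. Here I would perform the refined analysis of \cite{RobWei}: study the neck regions $R\mu_k<|x-x_k|<\delta$, show that $\sigma_ku_k+4\log\mu_k+\log(\sigma_k a(x_k))$ converges, away from $S$, to a biharmonic Green-type function of $\Omega$ with clamped boundary conditions, and extract from a localized Pohozaev identity both the concentrated mass and the boundary behaviour of this limit; the conditions $u_k=\partial_n u_k=0$ then force a relation incompatible with $\int_\Omega h(x,u_k)\le\Lambda$. I expect the two genuine obstacles to be (i) pushing the concentration--compactness dichotomy and the identification of the limit profile through for the full class (\textit{H1})--(\textit{H2}) rather than a pure exponential — for which \eqref{asymptotic_GROWTH} and Lemma \ref{fexpbeta_lem} are the right tools — and (ii) the boundary- and neck-Pohozaev bookkeeping in the critical case, where (\textit{H4}) and the estimate $F\le Cf$ play an essential role; the subcritical interior case, as seen above, is by contrast soft. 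Finally, the general polyharmonic problem \eqref{DIRf_generalpoly} is treated by the same scheme, with $4$ replaced by $2m=N$ throughout.
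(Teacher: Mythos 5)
Your overall strategy (blow-up by contradiction, rescaling to a Liouville-type limit, energy quantization, and a boundary Poho\v{z}aev identity exploiting (\textit{H4})) is the same as the paper's, but there are two places where your sketch has genuine gaps and one place where you make the argument harder than it needs to be.

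First, the half-space alternative you set up is superfluous. The paper shows that $d(x_k,\dOmega)/\mu_k\to+\infty$ \emph{always}: the Green-function estimates of Lemma \ref{Greenspropetry} together with $\int_\Omega h(x,u_k)\le\Lambda$ give $|\nabla v_k|\le C(R)$ on $B_R(0)\cap\Omega_k$, and if the boundary stayed at distance $O(\mu_k)$ one could pick $x\in B_R(0)\cap\partial\Omega_k$ with $v_k(x)=-M_k$ and get $M_k=|v_k(x)-v_k(0)|\le CR$, a contradiction. So the limit equation always lives on all of $\R^4$ and no half-space Liouville classification (which is not available for $\Delta^2v=e^v$ with clamped conditions) is needed. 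Note this does \emph{not} dispense with boundary concentration: the point $x_\infty$ may still lie on $\dOmega$, and that is the case the Poho\v{z}aev identity is for.

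Second, and most seriously, your treatment of \emph{critical interior} blow-up is not a proof. You defer to a Robert--Wei neck analysis and assert that the clamped conditions "force a relation incompatible with $\int_\Omega h\le\Lambda$", but you never identify that relation. The paper's mechanism is quantitative and local: near an interior concentration point, $u_k=\phi_k+w_k$ with $w_k$ uniformly bounded and $\phi_k\to\phi\ge\theta\, G_{NAV,B_r(x_0)}(\cdot,x_0)$ where $\theta=64\pi^2/\beta$ is the quantized mass from Lemma \ref{charact}; since the Green function has leading term $\frac{1}{8\pi^2}\log\frac{1}{|x-x_0|}$, the exponent $(\beta-\varepsilon)\theta/(8\pi^2)\ge 8-\varepsilon'$ exceeds the dimension, so $\int_{B_r(x_0)}e^{(\beta-\varepsilon)\phi}=+\infty$, contradicting $\int e^{(\beta-\varepsilon)u_k}\le C\int f(u_k)\le C(\Lambda)$. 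The inequality $\theta\beta=64\pi^2>4\cdot 8\pi^2$ is the whole point, and it is absent from your sketch.

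Third, in the boundary Poho\v{z}aev step two details go the wrong way in your write-up. The term $\int_{\dOmega\cap B_r(x_0)}(\Delta u_k)^2\langle x-y,n(x)\rangle\,d\sigma$ does \emph{not} have a definite sign, because $\langle x-x_0,n(x)\rangle$ changes sign on $\dOmega\cap B_r(x_0)$; the paper kills it by choosing $y_k=x_0+\rho_{k,r}n(x_0)$ with $\rho_{k,r}$ a quotient of boundary integrals, and controls the remaining $\partial B_r(x_0)\cap\Omega$ terms through decay estimates on the limit $U$ (Lemma \ref{LemmabehaviourU}). And the comparison you need between $F$ and $f$ is $F\ge\frac1m(f-f(0))$ (a \emph{lower} bound, from $\sup f'/f<\infty$), used to bound the bulk term from \emph{below} by the concentrated mass $\theta$; your stated $F\le Cf$, used to bound $H$ by $\Lambda$ from above, points the contradiction in the wrong direction. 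With these corrections the scheme closes as in the paper.
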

\noindent Before going into the details of the proofs, let us make some remarks about these two results.
\begin{enumerate}
	\item Theorem \ref{uniformboundblowupOMEGA} extends the a-priori bound of Theorem \ref{uniformboundblowupBALL} to the case of a smooth bounded domain, provided a control on the energy of solutions is assumed. This is a fairly natural assumption in the framework of the conformal geometry: see for instance \cite{ARS,BM,JY,Mart_cc,MP,RobWei}.
	\item The conditions (\textit{H3a})-(\textit{H3b}) provide a control near the boundary of $h$ and in broad terms they state that $h$ should be decreasing in $x$ along outer directions and increasing in $t$.
	\item The additional assumption \eqref{hpnablaH}, when applied to a model nonlinearity  of the form $h(x,t)=a(x)f(t)$, is nothing but a uniform control on $\nabla a$, and is thus equivalent to require $a\in W^{1,\infty}$ in a neighborhood of the boundary.
	\item By standard elliptic theory and assumption (\textit{H1}), the uniform bounds hold also in $C^{3,\alpha}$ for any $\alpha\in(0,1)$.
	\item As mentioned before, the assumptions on $f$ gather all superlinear profiles which can be controlled by a map $t\to e^{\gamma t}$ for some $\gamma>0$. This bound from above on the growth of $f$ reveals to be \textit{sharp} in the class of \textit{distributional} solutions. Indeed, in the spirit of Brezis and Merle, in Section \ref{Sezcounterex} we consider a problem of the kind \eqref{DIRf} with a nonlinearity having faster growth at $\infty$ and we show a distributional solution which is not even bounded on $\Omega$. However, the behaviour of sequences of \textit{weak} solutions for such nonlinearities is still an open question.

\end{enumerate}

\vskip0.2truecm
As briefly recalled at the beginning, the main motivation to study a-priori bounds for semilinear elliptic problems is to infer the existence of solutions. The subsequent Theorem \ref{Existence} is in fact established combining the Krasnosel'skii topological degree theory with Theorems \ref{uniformboundblowupBALL} and \ref{uniformboundblowupOMEGA}. We remark that the existence of solutions for semilinear Dirichlet problems like \eqref{DIRf_generalpoly} can be obtained up to the critical growth $t\mapsto e^{t^2}$ by variational methods, see \cite{Lakkis,LamLu}.\\
\indent To fix the notation, here and in the sequel, we denote by $\tilde\lambda_1(\Omega)$ (resp. $\lambda_1(\Omega)$) the first eigenvalue of $\Delta^2$ (resp. $-\Delta$) in the domain $\Omega$ subjected to Dirichlet boundary conditions, omitting the domain whenever it is clear from the context.
\begin{thm}\label{Existence}
	Let $\B$ be a ball in $\R^4$ and $h$ satisfy the assumptions of Theorem \ref{uniformboundblowupBALL}. Suppose moreover that $h(\cdot, t)\in C(\bar\B)$ (resp. $C^{0,\gamma}(\bar\B)$ for some $\gamma\in(0,1)$) for any $t\geq0$ and
	\begin{equation}\label{HP}
	\limsup_{t\to0^+}\frac{h(x,t)}{t}<\tilde\lambda_1\quad\mbox{uniformly in $x\in\B$}.
	\end{equation}
	Then, problem \eqref{DIRf} admits a positive strong (resp. classical) solution.
\end{thm}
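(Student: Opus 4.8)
The plan is to recast \eqref{DIRf} as a fixed point problem for a compact operator on the positive cone $\mathcal P:=\{u\in C(\overline{\B})\,:\,u\ge 0\ \text{in}\ \B\}$ and to compute its Leray--Schauder index on a small and on a large open ball $B_\rho:=\{u:\|u\|_{L^\infty(\B)}<\rho\}$ of $C(\overline{\B})$, the two values differing by $1$; Krasnosel'skii's degree theory then yields a nonzero fixed point. The place where the ball is essential is Boggio's positivity principle, which provides a \emph{positive} Green function $G$ for $\Delta^2$ under homogeneous Dirichlet conditions — precisely the ingredient unavailable on a general domain, and the reason the statement is restricted to $\B$. Let $K\colon C(\overline{\B})\to C(\overline{\B})$, $K(g)(x)=\int_{\B}G(x,y)g(y)\,dy$, be the solution operator of $\Delta^2 w=g$ in $\B$ with $w=w_n=0$ on $\dB$: by $L^p$--elliptic regularity and the compact embedding $W^{4,p}(\B)\hookrightarrow C(\overline{\B})$ for $p>4$ it is compact, and by Boggio's principle $K(g)\ge 0$ whenever $g\ge 0$, with $K(g)>0$ in $\B$ if moreover $g\not\equiv 0$. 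Put $Tu:=K(h(\cdot,u))$; by (\textit{H1}) the map $x\mapsto h(x,u(x))$ stays bounded in $L^\infty(\B)$ as $u$ runs over a bounded subset of $C(\overline{\B})$, so $T$ is compact and $T(\mathcal P)\subset\mathcal P$. A function $u\in\mathcal P$ is a fixed point of $T$ if and only if it is a weak solution of \eqref{DIRf}; any \emph{nonzero} fixed point is $>0$ in $\B$ (otherwise $\Delta^2u=h(\cdot,u)\equiv 0$ forces $u\equiv 0$), and, since $h(\cdot,u(\cdot))\in L^p(\B)$ for every $p$ and $h(\cdot,t)$ is continuous (resp.\ of class $C^{0,\gamma}$), elliptic $L^p$ (resp.\ Schauder) theory upgrades it to a strong (resp.\ classical) solution. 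Hence it suffices to produce a nonzero fixed point of $T$ in $\mathcal P$.

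\emph{Index near the origin.} By \eqref{HP} there are $\mu<\tilde\lambda_1$ and $\delta>0$ with $h(x,0)=0$ and $h(x,t)\le\mu t$ for all $x\in\B$ and $0\le t\le\delta$. Fix $\varepsilon\in(0,\delta]$. If $u=sTu$ for some $s\in[0,1]$ with $\|u\|_{L^\infty(\B)}=\varepsilon$, then $u\in\mathcal P\cap H^2_0(\B)$, $0\le u\le\delta$, and testing $\Delta^2u=s\,h(\cdot,u)$ against $u$ yields
\[
\int_{\B}|\Delta u|^2\,dx=s\int_{\B}h(x,u)\,u\,dx\le\mu\int_{\B}u^2\,dx\le\frac{\mu}{\tilde\lambda_1}\int_{\B}|\Delta u|^2\,dx,
\]
whence $u\equiv 0$, a contradiction. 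Therefore $s\mapsto sT$ is an admissible homotopy on $\mathcal P\cap\partial B_\varepsilon$ and $i(T,\mathcal P\cap B_\varepsilon,\mathcal P)=1$, the index of the null operator.

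\emph{Index on a large ball.} Fix $\chi\in C^\infty_c(\B)$ with $0\le\chi\not\equiv 0$ and a ball $\B_0$ with $\overline{\B_0}\subset\B$; set $a':=\min_{\overline{\B_0}}a>0$, $c_\chi:=\min_{\overline{\B_0}}K\chi>0$ and $\gamma':=\min_{x\in\overline{\B_0}}\int_{\B_0}G(x,y)\,dy>0$ (the last two positive by Boggio). For $\tau\ge 0$ put $T_\tau u:=K(h(\cdot,u)+\tau\chi)$, so that a fixed point $u\in\mathcal P$ solves $\Delta^2u=h(x,u)+\tau\chi$ and, since $K(h(\cdot,u))\ge 0$, satisfies $u\ge\tau K\chi\ge\tau c_\chi$ on $\B_0$. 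I claim: \emph{(a)} there is $\tau^\ast>0$ such that $T_\tau$ has no fixed point in $\mathcal P$ for $\tau>\tau^\ast$; \emph{(b)} there is $R_0>0$, independent of $\tau\in[0,\tau^\ast]$, bounding in $L^\infty(\B)$ all fixed points of $T_\tau$ with $\tau\in[0,\tau^\ast]$. For (a): by (\textit{H2}) pick $t_0$ with $h(x,t)\ge\tfrac12 a(x)f(t)$ for $t\ge t_0$; $f$ is nondecreasing on $[M,+\infty)$ by (\textit{A1}), and by superlinearity (\textit{A2}) there is $m_\ast\ge\max\{t_0,M\}$ with $\tfrac12 a'\gamma'f(m)>m$ for every $m\ge m_\ast$; let $\tau^\ast$ be such that $\tau c_\chi\ge m_\ast$ for $\tau>\tau^\ast$. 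A fixed point $u$ of such a $T_\tau$ then satisfies $u\ge m_0:=\tau c_\chi$ on $\B_0$, hence $h(\cdot,u)\ge\tfrac12 a'f(m_0)$ on $\B_0$, hence $u\ge K(h(\cdot,u))\ge\tfrac12 a'\gamma'f(m_0)$ on $\B_0$; setting $m_{k+1}:=\tfrac12 a'\gamma'f(m_k)$ and iterating, $u\ge m_k$ on $\B_0$ for all $k$, while $m_k\uparrow+\infty$ — contradicting $\sup_{\overline{\B_0}}u<\infty$. For (b): each $h_\tau(x,t):=h(x,t)+\tau\chi(x)$ still satisfies (\textit{H1})--(\textit{H2})--(\textit{H3}) with the same $a$ and $f$ — the added term is absorbed in (\textit{H2}) because $f\to+\infty$, is constant in $t$, and vanishes near $\dB$ so that (\textit{H3b}) is unaffected — and it meets hypothesis (i) or (ii) of Theorem \ref{uniformboundblowupBALL} whenever $h$ does, with all the data entering that theorem uniform for $\tau\in[0,\tau^\ast]$; thus Theorem \ref{uniformboundblowupBALL}, applied to the family $\{h_\tau\}_{\tau\in[0,\tau^\ast]}$, provides a single $R_0$. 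Now choose $R>\max\{R_0,\varepsilon\}$. By (a)--(b) no $T_\tau$ with $\tau\ge 0$ has a fixed point on $\mathcal P\cap\partial B_R$, so the homotopy $\tau\mapsto T_\tau$, $\tau\in[0,\tau^\ast+1]$, is admissible there and $i(T_\tau,\mathcal P\cap B_R,\mathcal P)$ is constant in $\tau$; since $T_{\tau^\ast+1}$ has no fixed point in $\mathcal P\cap B_R$, that common value is $0$, i.e. $i(T,\mathcal P\cap B_R,\mathcal P)=0$.

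By additivity of the index, $i(T,\mathcal P\cap(B_R\setminus\overline{B_\varepsilon}),\mathcal P)=0-1=-1\ne 0$, so $T$ has a fixed point $u\in\mathcal P$ with $\varepsilon<\|u\|_{L^\infty(\B)}\le R$; in particular $u\not\equiv 0$, hence $u>0$ in $\B$, and $u$ is the sought positive strong (resp.\ classical) solution of \eqref{DIRf}. I expect the main obstacle to be the index computation on the large ball. One cannot simply take $R$ larger than an a-priori bound and then deform $h$ to a nonlinearity with no solutions, because the natural ``pushing-off'' deformation $h\mapsto h+\tau\chi$ raises the energy and hence breaks every fixed $L^\infty$ bound as $\tau\to\infty$; the circularity is broken only by the \emph{nonexistence} statement (a), which holds with no restriction on $\|u\|$ and whose proof — the Boggio iteration above — is precisely where the superlinearity (\textit{A2}) is indispensable. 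The second delicate point is the uniformity asserted in (b): one must re-examine the proof of Theorem \ref{uniformboundblowupBALL} to confirm that the shifted nonlinearities fall under it with constants independent of $\tau$, which is routine (the blow-up scheme is by contradiction, hence insensitive to a parameter ranging in a compact set with uniformly controlled data) but must be handled with some care because of condition (\textit{H3}). By contrast, the index computation near the origin is a soft consequence of \eqref{HP} and the variational characterization of $\tilde\lambda_1$.
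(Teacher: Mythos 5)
Your argument is correct and lands on the same conclusion, but the degree-theoretic bookkeeping and both homotopies differ from the paper's. The paper feeds the problem into Krasnosel'skii's cone fixed-point theorem (Lemma \ref{Krasno}): near the origin it exhibits the linear majorant $A=\alpha T$ with spectral radius $\alpha/\tilde\lambda_1<1$, whereas you run the homotopy $s\mapsto sT$ and exclude fixed points on $\partial B_\varepsilon$ by testing against $u$ and using the variational characterization of $\tilde\lambda_1$ --- equivalent in substance. The genuine divergence is at infinity: the paper deforms through $\Psi(u,t)=T(h(\cdot,u+t))$ (a translation in the $t$-slot), so that the estimate of Lemma \ref{lemmaDir1-B} holds with a constant independent of $t$ and immediately forces $t\le\tilde T$, giving nonexistence for large $t$ almost for free; you instead add a forcing term $\tau\chi$ and prove nonexistence for large $\tau$ by the Boggio-positivity iteration $m_{k+1}=\tfrac12 a'\gamma' f(m_k)\uparrow+\infty$, which uses only the positivity of the Dirichlet Green function on the ball and the superlinearity (\textit{A2}), not the eigenfunction estimate. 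What each buys: the paper's translation keeps the perturbed problem verbatim inside the class covered by Section \ref{SectionNaPR}, while your additive perturbation needs a separate (but elementary and robust) nonexistence proof plus a check that $h_\tau$ still satisfies (\textit{H1})--(\textit{H3}); for the latter you must choose $\operatorname{supp}\chi$ disjoint from the boundary strip $\omega_{\bar r}$ of (\textit{H3}) and use that $a$ is bounded below on $\operatorname{supp}\chi$ so that (\textit{H2}) survives with uniform constants --- both points you essentially address. Both routes then hinge on the identical delicate step, which you correctly flag and which the paper handles in exactly the same hand-waving-then-retrace manner: the a-priori bound of Theorem \ref{uniformboundblowupBALL} must be rerun uniformly over the compact parameter range ($\tau\in[0,\tau^*]$ for you, $t\in[0,\tilde T]$ for the paper). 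Two cosmetic repairs: the inequality $h(x,t)\ge\tfrac12 a(x)f(t)$ for large $t$ is not literally what \eqref{asymptotic_GROWTH} gives (there is a $-d_\varepsilon$), but on $\overline{\B_0}$, where $a\ge a'>0$ and $f\to+\infty$, the constant can be absorbed at the price of replacing $\tfrac12$ by $\tfrac14$; and $h(x,0)=0$ does follow from \eqref{HP} together with the continuity of $h(x,\cdot)$, as your small-ball computation implicitly requires.
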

\begin{remark}
	The assumptions of Theorem \ref{Existence} are, for instance, satisfied by $h(x,t)=a(x)t^pe^{t^\alpha}$ for any $\alpha\in[0,1)$, $p>1$ regarding the subcritical context, or $h(x,t)=a(x)t^p \frac{e^{\theta t}}{(t+1)^\gamma}$ for any $\gamma\geq0$ and $p,\theta>0$ for the critical framework.
\end{remark}
\vskip0.2truecm
This article is organized as follows. In Section \ref{SectionNaPR} we establish the uniform boundedness of solutions near the boundary and the $L^1$ control for the energy of solutions. In Section \ref{SectionBlowup} the blow-up technique is outlined both for subcritical and critical nonlinearities, leading to the proof of Theorem \ref{uniformboundblowupBALL}. In Sections \ref{SectionDomains} and \ref{SectionPOLY} we consider some extensions and in particular we prove Theorem \ref{uniformboundblowupOMEGA}. Theorem \ref{Existence} is established in Section \ref{SectionEXISTENCE}, while a counterexample concerning the behaviour of solutions above the critical case is shown in Section \ref{Sezcounterex}. Finally, we collect in Section \ref{SectionOP} some problems which are left open.

\vskip0.2truecm
\underline{\textbf{Notation:}} Let $f,g:\Omega\to\R^+$: we say that $f\preceq g$ if there exists a constant $c>0$ such that $f(t)\leq cg(t)$ for all $t\in\Omega$, and we write $f\simeq g$ if both $f\preceq g$ and $g\preceq f$ hold. We define $f\wedge g:=\min\{f,g\}$. Moreover, $d_\Omega(x)$ denotes the distance from $x\in\Omega$ to the boundary $\dOmega$. Finally, $c,C$ indicate generic constants, whose value may vary from line to line, and also within the same line.
\vskip0.4truecm
\underline{\textbf{Acknowledgements:}} The paper was prepared when the first author was employed at Universit\`{a} degli Studi di Padova and the second author at Aix-Marseille Universit\'{e}. G.Mancini was supported by the Swiss National Science Foundation, projects nr. PP00P2-144669 and PP00P2-170588/1. G.Romani has received funding from Excellence Initiative of Aix-Marseille Universit\'{e} - A*MIDEX, a French "Investissements d'Avenir" programme.

\section{Uniform estimates near the boundary and energy estimates on a ball} \label{SectionNaPR}

The main aim of this section is to prove the following result.
\begin{prop}\label{NEWSection1}
	Let $\mathcal B$ the unit ball in $\R^4$. Let $h$ satisfy the assumptions of Theorem \ref{uniformboundblowupBALL}. Then there exist $C_1,\Lambda>0$ and  a small neighborhood $\omega\subset\B$ of $\dB$ such that
	\begin{equation*}
	\|u\|_{L^\infty(\omega)}\leq C_1\qquad\mbox{and}\qquad\int_\B h(x,u)dx\leq\Lambda
	\end{equation*}
	for all (positive) weak solutions $u$ of \eqref{DIRf}.
\end{prop}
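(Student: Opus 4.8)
The plan is to derive both bounds by combining an integral estimate with weight $d_\B^2$, obtained by testing the equation against the first Dirichlet eigenfunction of $\Delta^2$ on $\B$, with a monotonicity property of solutions near $\dB$ coming from the moving plane method. The latter is where the main difficulty lies, since $\Delta^2$ has no comparison principle on general domains and Green function estimates are poor.

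\emph{Step 1: a weighted $L^1$ bound.} Since $\B$ is a ball, by Boggio's positivity the first Dirichlet eigenfunction $\varphitilde$ of $\Delta^2$ on $\B$ is positive and satisfies $\varphitilde\simeq d_\B^2$ on $\overline\B$. Testing \eqref{def} with $\varphi=\varphitilde$ and integrating by parts gives $\lambdatilde\int_\B u\,\varphitilde\,dx=\int_\B h(x,u)\,\varphitilde\,dx$. By (\textit{A2}) and the lower bound in \eqref{asymptotic_GROWTH}, for every $K>0$ there is $C_K>0$ with $h(x,t)\ge K\,a(x)\,t-C_K$ for all $x\in\B$, $t\ge0$. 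In case (i), where $a\ge a_0>0$, choosing $K>\lambdatilde/a_0$ and absorbing the linear term yields $\int_\B h(x,u)\,\varphitilde\,dx\le C$, hence $\int_\B h(x,u)\,d_\B^2\,dx\le C$. In case (ii), where the potential may degenerate at $\dB$, I would instead exploit $f(t)\ge t^\alpha-C$ with $\alpha>1$: the eigenfunction identity then gives $\int_\B a\,u^\alpha\varphitilde\,dx\le C\bigl(\int_\B u\,\varphitilde\,dx+1\bigr)$, and on a fixed interior region $\{d_\B\ge\rho\}$, where $a\ge a_\rho>0$, one absorbs $\int_{\{d_\B\ge\rho\}}u\,\varphitilde\,dx$ into a small multiple of $\int_\B u\,\varphitilde\,dx$ by Young's inequality with conjugate exponents $\alpha,\alpha'$, the complementary near--boundary contribution being dominated by an interior slice thanks to the monotonicity of Step 2 below; a suitable choice of $\rho$ and of the Young parameter again gives $\int_\B h(x,u)\,d_\B^2\,dx\le C$.

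\emph{Step 2: near--boundary monotonicity.} Using (\textit{H3a})--(\textit{H3b}) and the fact that $\B$ is a ball, a moving plane argument --- performed, as in the classical second--order setting \cite{dFLN}, only on caps adjacent to $\dB$ thin enough for a suitable comparison principle for $\Delta^2$ to hold --- shows that there is $\bar r>0$, which we may take as in (\textit{H3}), so that every positive weak solution of \eqref{DIRf} is non--decreasing along each inward unit vector $\theta$ with $|\theta+n(x)|\le\bar\delta$, as long as the corresponding segment stays in $\omegar$.

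\emph{Step 3: $L^\infty$ bound near $\dB$ and conclusion.} Given $x\in\B$ close to $\dB$, let $K_x\subset\omegar$ be the truncated cone of inward directions from $x$ of half--opening $\bar\delta$ and length $\simeq\bar r$. By Step 2, $u\ge u(x)$ on $K_x$; hence, by (\textit{H3a}) and (\textit{H3b}), $h(y,u(y))\ge h(y,u(x))\ge h(x,u(x))$ for all $y\in K_x$. Integrating over the part of $K_x$ lying at distance $\ge c\,\bar r$ from $\dB$, whose measure is bounded below in terms of $\bar r$ and $\bar\delta$ only, and invoking Step 1, gives $h(x,u(x))\le C\int_\B h(y,u(y))\,d_\B(y)^2\,dy\le C$. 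Picking then a point $y_x\in K_x$ at a fixed positive distance from $\dB$, so that $a(y_x)\ge a_0'>0$ by positivity and continuity of $a$ in the interior, and repeating the estimate from $y_x$, one obtains $h(y_x,u(y_x))\le C$; since $h(y_x,t)\ge(1-\varepsilon)a_0'f(t)-d_\varepsilon$ and $f$ is superlinear, this bounds $u(y_x)$, so $u(x)\le u(y_x)\le C_1$ by Step 2. Thus $\|u\|_{L^\infty(\omega)}\le C_1$ for a suitable neighbourhood $\omega$ of $\dB$. Finally, writing $\int_\B h(x,u)\,dx=\int_\omega h(x,u)\,dx+\int_{\B\setminus\omega}h(x,u)\,dx$, the first term is bounded by (\textit{H1}) and the $L^\infty$ bound just obtained, the second by $\sup_{\B\setminus\omega}d_\B^{-2}<\infty$ together with Step 1, so $\int_\B h(x,u)\,dx\le\Lambda$. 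The step I expect to be the main obstacle is Step 2, namely making the moving plane method work for $\Delta^2$ near $\dB$ in spite of the lack of a maximum principle.
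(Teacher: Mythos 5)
Your overall architecture matches the paper's: a weighted $L^1$ estimate obtained by testing against $\varphitilde$, a moving-plane monotonicity statement near $\dB$, a cone argument transferring the interior integral bound to a pointwise bound near the boundary, and a final splitting of $\int_\B h(x,u)$. Step 3 is essentially the paper's argument (the paper bounds $u(x)$ directly by $\int_{I_x}u\le C\int_\B u\varphitilde$ rather than going through $h(x,u(x))$, but both work). Your treatment of case (ii) in Step 1 is genuinely different: the paper does not use the monotonicity there, but instead the two-sided Boggio estimate $G_\B(x,y)\ge C d_\B^2(x)d_\B^2(y)$ and the representation formula, which give $\int_\B h\varphitilde+C\ge C\bigl(\int_\B h\varphitilde\bigr)^{\alpha}$ with $\alpha>1$ and hence the bound by a purely nonlinear absorption; your route (reduce the near-boundary contribution to an interior slice via monotonicity, then absorb using superlinearity where $a\ge a_\rho>0$) is admissible as a forward reference since the moving-plane lemma is independent of the energy estimate, but it makes case (ii) of Step 1 contingent on Step 2.

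The genuine gap is Step 2. You propose to run the moving plane ``as in the classical second-order setting'' on caps $\Sigma_\lambda$ thin enough for ``a suitable comparison principle for $\Delta^2$ to hold.'' No such comparison principle exists: the maximum principle in domains of small measure (or thin domains) is a second-order phenomenon, and for $\Delta^2$ with Dirichlet conditions positivity preserving fails in general domains irrespective of their size or thickness, so the standard comparison of $u$ with its reflection on the cap cannot be started this way. The paper circumvents this entirely by never invoking a maximum principle on $\Sigma_\lambda$: it writes $\partial_{x_1}u(x)$ and $u(x)-u(\bar x)$ through the Green representation $u=\int_\B G_\B(\cdot,y)h(y,u(y))\,dy$, folds the integral over the reflected half, and concludes from the sign and reflection inequalities for the Dirichlet Green function of the ball, namely $\partial_{x_1}G_\B(x,y)<0$ and $\partial_{x_1}G_\B(x,y)+\partial_{x_1}G_\B(x,\bar y)\le0$ for $x\in T_\lambda$ (Lemma \ref{BGW_lemma}), and $G_\B(x,y)>\max\{G_\B(x,\bar y),G_\B(\bar x,y)\}$, $G_\B(x,y)-G_\B(\bar x,\bar y)>|G_\B(x,\bar y)-G_\B(\bar x,y)|$ (Lemma \ref{FGW_lemma}), together with (\textit{H3a})--(\textit{H3b}) to order $h(y,u(y))\ge h(\bar y,u(\bar y))$. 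This is also exactly why the result is stated only for the ball: these kernel inequalities are known only there. As written, your Step 2 would fail, and with it the near-boundary part of Step 1(ii) and all of Step 3.
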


\noindent This result prevents boundary blow-up and yields an energy control on solutions. This will be essential for the blow-up technique in Section \ref{SectionBlowup}. The proof is inspired by some arguments in \cite{dFLN} for second-order elliptic problems.
Since for higher-order problems the maximum principle does not hold in general, we have to restrict to the class of positivity preserving domains. However, in oder to apply a moving-planes technique, we shall also need pointwise estimates of the Green function of $\Delta^2$ with Dirichlet boundary conditions, which are known only when $\Omega$ is a ball.

\vskip0.2truecm
In the following, we will often make use of the Green function $G_{\Omega}$ of the biharmonic operator with Dirichlet boundary conditions. We recall that $G_{\Omega}:\Omegabar\times\Omegabar\setminus \{(x,x) :
x\in \overline{\Omega}\}\rightarrow\R$ is defined as the unique function such that for any $g\in L^2(\Omega)$,
\begin{equation*}
u(x):=\int_\Omega G_{\Omega}(x,y)\,g(y)\,dy
\end{equation*}
is the unique solution in $\Hoo$ of the equation $\Delta^2u=g$ in $\Omega$.\\
We collect here some results about pointwise estimates of $G_{\Omega}$ and of its derivatives. They go back to the work of Krasovski{\u\i} \cite{Kraso} later on refined by Dall'Acqua and Sweers in \cite{DAS}. However, very smooth boundaries are required therein. This formulation which assumes less regularity can be deduced combining \cite[Theorem 4]{GR} and \cite[Theorem 2]{GRholes}.
\begin{lem}\label{Greenspropetry}
	Let $\Omega\subset\R^4$ be a bounded domain of class $C^{4,\gamma}$ for some $\gamma\in(0,1)$. There exists a positive constant $C$ depending on the domain, such that for all $x,y\in\Omega$, $x\neq y$, there holds
	\begin{equation}\label{Greenspropetry_0}
	|G_\Omega(x,y)|\leq C\log\bigg(2+\dfrac{1}{|x-y|}\bigg),
	\end{equation}
	$$|\nabla^iG_\Omega(x,y)|\leq \frac{C}{|x-y|^i},\quad\mbox{for any}\,\,i\geq1.$$
\end{lem}
\noindent In the special case of $\Omega=\B$, thanks to the explicit Boggio's formula, also the estimate from below of $G_\B$ may be obtained. Therefore, the following sharp two-sided estimate holds:

\begin{lem}[\cite{GGS}, Theorem 4.6]\label{2sidedestimateball}
	In $\overline\B\times\overline\B$ we have
	\begin{equation}\label{2sided}
	G_\B(x,y)\simeq \log\bigg(1+\dfrac{d_\B(x)^2d_\B(y)^2}{|x-y|^4}\bigg).
	\end{equation}
\end{lem}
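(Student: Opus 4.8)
The plan is to derive \eqref{2sided} directly from Boggio's explicit formula for the Green function of $(-\Delta)^m$ on the unit ball, specialized to the conformal dimension $n=2m$ (here $m=2$, $n=4$), and then reduce everything to an elementary one-variable estimate. Boggio's formula reads
\[
G_\B(x,y)=c_{m,n}\,|x-y|^{2m-n}\int_1^{[XY]/|x-y|}(v^2-1)^{m-1}v^{1-n}\,dv,\qquad [XY]:=\sqrt{|x|^2|y|^2-2\,x\cdot y+1},
\]
and in the conformal dimension the prefactor $|x-y|^{2m-n}$ equals $1$. A direct computation gives the identity $[XY]^2-|x-y|^2=(1-|x|^2)(1-|y|^2)$, valid for all $x,y\in\B$; hence, writing
\[
t:=\frac{[XY]^2}{|x-y|^2}-1=\frac{(1-|x|^2)(1-|y|^2)}{|x-y|^2}\ge 0,
\]
the Green function reduces, for $m=2$, $n=4$, to the elementary expression
\[
G_\B(x,y)=c_2\int_1^{\sqrt{1+t}}\Bigl(\frac1v-\frac1{v^3}\Bigr)\,dv=\frac{c_2}{2}\Bigl(\log(1+t)+\frac1{1+t}-1\Bigr)=:\frac{c_2}{2}\,g(t),
\]
with $c_2>0$.

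The core of the proof is then the purely one-variable comparison $g(t)\simeq\log(1+t^2)$ on $[0,+\infty)$. First, $g$ is smooth with $g(0)=0$ and $g'(t)=t/(1+t)^2>0$ for $t>0$, so $g>0$ on $(0,+\infty)$ — which, incidentally, recovers the positivity $G_\B>0$. Next one examines the two relevant regimes: a Taylor expansion at the origin gives $g(t)=\tfrac12 t^2+O(t^3)$, so that $g(t)/\log(1+t^2)\to\tfrac12$ as $t\to0^+$, while $g(t)\sim\log(1+t)\sim\log t$ as $t\to+\infty$, so that again $g(t)/\log(1+t^2)\to\tfrac12$. Since $t\mapsto g(t)/\log(1+t^2)$ is continuous and strictly positive on $(0,+\infty)$ and has strictly positive finite limits at both endpoints, it is bounded above and below by positive constants, which is precisely $g\simeq\log(1+t^2)$.

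It remains to translate $t$ back into boundary distances and to match the form in \eqref{2sided}. From $1-|z|^2=(1-|z|)(1+|z|)$ and $1+|z|\in[1,2]$ for $z\in\B$ we get $1-|z|^2\simeq d_\B(z)$, whence $t\simeq d_\B(x)\,d_\B(y)/|x-y|^2$ and $t^2\simeq d_\B(x)^2 d_\B(y)^2/|x-y|^4$. Finally, since for each fixed $c>0$ the maps $u\mapsto\log(1+cu)$ and $u\mapsto\log(1+u)$ are comparable on $[0,+\infty)$ (both behave like $u$ for $u$ bounded and like $\log u$ for $u$ large), we obtain $\log(1+t^2)\simeq\log\bigl(1+d_\B(x)^2 d_\B(y)^2/|x-y|^4\bigr)$, and chaining the three steps yields \eqref{2sided}.

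The one delicate point — and the reason a crude bound is not enough — is the cancellation in $g(t)=\log(1+t)-t/(1+t)$ near $t=0$, i.e.\ near the boundary where $d_\B(x)\,d_\B(y)\ll|x-y|^2$: dropping the subtracted term yields the correct \emph{upper} bound $g(t)\le\log(1+t)$, but $\log(1+t)$ is not comparable to $\log(1+t^2)$ as $t\to0^+$, so one genuinely has to use that $g$ vanishes quadratically, not linearly, at the origin. Away from this regime — i.e.\ for $t$ bounded away from $0$, and likewise for the logarithmic singularity of $G_\B$ along the diagonal — the comparison is routine.
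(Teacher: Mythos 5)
Your proof is correct. Note first that the paper itself offers no argument for this lemma: it is quoted verbatim from [GGS, Theorem~4.6], so there is no ``paper proof'' to compare against line by line. Your derivation is essentially the standard one behind that reference, specialized efficiently to the conformal case $m=2$, $n=4$: Boggio's formula with vanishing prefactor $|x-y|^{2m-n}=1$, the algebraic identity $[XY]^2-|x-y|^2=(1-|x|^2)(1-|y|^2)$, and the reduction to the one-variable function $g(t)=\log(1+t)+\tfrac{1}{1+t}-1$. All the computations check out: $g'(t)=t/(1+t)^2$, the expansion $g(t)=\tfrac12 t^2+O(t^3)$ at the origin, the asymptotics $g(t)\sim\log t$ at infinity, and the continuity-plus-endpoint-limits argument for $g\simeq\log(1+t^2)$ are each correct, as are the final substitutions $1-|z|^2\simeq d_\B(z)$ and $\log(1+cu)\simeq\log(1+u)$. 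You have also correctly isolated the only genuinely delicate point, namely that the quadratic (not linear) vanishing of $g$ at $t=0$ is what produces the squared distances in \eqref{2sided}; the crude bound $g(t)\le\log(1+t)$ would only give the weaker estimate with $d_\B(x)d_\B(y)/|x-y|^2$ inside the logarithm. The one advantage of the treatment in [GGS] is that it handles all $m$ and $n$ simultaneously (which the paper needs again in Section~\ref{SectionPOLY} for general $m$ with $n=2m$); your argument extends to that case with the same structure, since for $n=2m$ the integrand $(v^2-1)^{m-1}v^{1-2m}$ still integrates to $\log v$ plus negative powers of $v$, and the resulting $g$ again vanishes quadratically at $0$.
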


\vskip0.2truecm
\indent In the whole section, we suppose $\B$ and $h$ as in Proposition \ref{NEWSection1} and we denote by $\tilde\varphi_1$ the first eigenfunction of $\Delta^2$ in $\B$ subjected to Dirichlet boundary conditions. Notice that $\tilde\varphi_1>0$ (see \cite[Theorem 3.7]{GGS}).

\vskip0.2truecm
\indent We split the proof of Proposition \ref{NEWSection1} in several steps. First, we obtain a local $L^1$ estimate for the right-hand side of equation \eqref{DIRf}. Here the assumptions (\textit{i})-(\textit{ii}) of Theorem \ref{uniformboundblowupBALL} play a role, together with pointwise Green function estimates from below.
\begin{lem}[Local energy estimate]\label{lemmaDir1-B}
	There exists a constant $C>0$ such that
	\begin{equation}\label{localL1bdd}
	\int_\B h(x,u)\tilde\varphi_1= \tilde\lambda_1\int_\B u\tilde\varphi_1\leq C,
	\end{equation}
	for all weak solutions $u$ of \eqref{DIRf}.
\end{lem}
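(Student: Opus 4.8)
The plan is to test the weak formulation \eqref{def} with $\varphi = \tilde\varphi_1$. Since $\Delta^2\tilde\varphi_1 = \tilde\lambda_1\tilde\varphi_1$ with Dirichlet conditions, we get immediately
$$\int_\B h(x,u)\tilde\varphi_1 = \int_\B \Delta u\,\Delta\tilde\varphi_1 = \tilde\lambda_1\int_\B u\tilde\varphi_1,$$
which is the equality in \eqref{localL1bdd}. So everything reduces to bounding $\int_\B u\tilde\varphi_1$ — equivalently $\int_\B h(x,u)\tilde\varphi_1$ — by a constant independent of the solution.

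**The core estimate.**

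The idea is a "Kaplan-type" argument combined with the growth of $h$. Using \eqref{asymptotic_GROWTH}, for a fixed small $\varepsilon$ we have $h(x,t)\ge (1-\varepsilon)a(x)f(t) - d_\varepsilon$, and by Lemma \ref{fexpbeta_lem}, $f(t)\ge D_\varepsilon e^{(\beta-\varepsilon)t} - C_\varepsilon$. Hence pointwise $h(x,u)\ge c_1 a(x)e^{(\beta-\varepsilon)u} - c_2$ for constants $c_1,c_2>0$. On the other hand, since $\tilde\varphi_1>0$ is bounded, $u\tilde\varphi_1 \le C_3 e^{(\beta-\varepsilon)u}$ up to adjusting constants (because $t\preceq e^{(\beta-\varepsilon)t}+1$ when $\beta>\varepsilon$; in the subcritical case $\beta=0$ one instead uses a genuinely exponential lower bound for $f$ coming from A2, superlinearity, to absorb the linear term — this is where alternative (ii), $f(t)/t^\alpha\to\infty$, enters, giving $f(t)\ge c\,t^\alpha$ with $\alpha>1$ so that $u\tilde\varphi_1 \preceq f(u)$ directly). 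Under alternative (i), $a\ge a_0>0$, so $a(x)e^{(\beta-\varepsilon)u}$ dominates $u\tilde\varphi_1$ on all of $\B$; under alternative (ii) we instead compare $u\tilde\varphi_1$ with $a(x)f(u)$ directly using $f(u)\succeq u^\alpha \succeq u$, but now the possible vanishing of $a$ near $\dB$ must be handled, which is precisely what the boundary hypotheses (H3) and the Green function lower bound of Lemma \ref{2sidedestimateball} are for. Writing $u(x) = \int_\B G_\B(x,y) h(y,u(y))\,dy$ and using \eqref{2sided}, one shows $u(x)\ge c\, d_\B(x)^2 \int_\B d_\B(y)^2 h(y,u(y))\,dy$ on a fixed interior region, so that a lower bound on $\int_\B h\tilde\varphi_1$ forces $u$ to be large in the interior, which via the superlinear growth of $h$ feeds back a strictly larger lower bound on $\int_\B h\tilde\varphi_1$ — a bootstrap that can only terminate if the integral was bounded to begin with.

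**Carrying it out and the main obstacle.**

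Concretely, set $I := \int_\B u\tilde\varphi_1 = \tilde\lambda_1^{-1}\int_\B h(x,u)\tilde\varphi_1$. Combining the pointwise lower bound $h(x,u)\tilde\varphi_1 \ge c_1 a(x)\,e^{(\beta-\varepsilon)u}\,\tilde\varphi_1 - c_2\tilde\varphi_1$ with Jensen's inequality applied to the probability measure $\tilde\varphi_1\,dx / \|\tilde\varphi_1\|_1$, one gets
$$\tilde\lambda_1 I = \int_\B h(x,u)\tilde\varphi_1 \ge c_1 \Big(\!\int_\B a\tilde\varphi_1\Big)\exp\!\Big((\beta-\varepsilon)\,\frac{\int_\B u a\tilde\varphi_1}{\int_\B a\tilde\varphi_1}\Big) - c_2\|\tilde\varphi_1\|_1,$$
and relating $\int_\B u a\tilde\varphi_1$ to $I$ (again via the Green representation and Lemma \ref{2sidedestimateball} to bound it below by $c\,I$ on the interior, after discarding the boundary layer where $u$ is already controlled if we had a bound there — but we do not yet, so one must be careful and instead use that $a\tilde\varphi_1 \ge c\,\tilde\varphi_1^2$ under (i), or exploit (ii)). This yields an inequality of the form $I \ge c_4 e^{c_5 I} - c_6$, forcing $I\le C$. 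The honest statement is: either $I$ is bounded by an absolute constant, or $e^{c_5 I}$ grows faster than $I$ and the inequality is violated. The main obstacle is the bookkeeping around the two alternatives (i)/(ii) and the possibly degenerate potential $a$: under (ii) the argument must genuinely use the interior lower Green estimate \eqref{2sided} to propagate size from the bulk to a neighborhood of $\dB$, whereas under (i) the uniform positivity of $a$ makes the Jensen step essentially immediate. A secondary subtlety is that in the subcritical case $\beta=0$, so the "exponential" in Jensen degenerates to a constant and one must instead invoke A2 and alternative (ii) to obtain a superlinear — in fact super-quadratic — lower bound $f(t)\succeq t^\alpha$, making $I \ge c\,(\text{something})^\alpha$ with $\alpha>1$, which still closes by the same convexity mechanism.
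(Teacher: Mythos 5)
The equality step and your treatment of alternative \emph{(ii)} (Green-function lower bound from Lemma \ref{2sidedestimateball} giving $u(x)\geq c\,\tilde\varphi_1(x)\int_\B \tilde\varphi_1 h(y,u)\,dy$, fed back through $f(t)\succeq t^\alpha$ to get an inequality of the form $J+C\geq cJ^\alpha$ with $\alpha>1$) match the paper's proof in substance. The problem is your treatment of alternative \emph{(i)}. Your Jensen argument, based on the lower bound $f(t)\geq D_\varepsilon e^{(\beta-\varepsilon)t}-C_\varepsilon$ of Lemma \ref{fexpbeta_lem}, is valid only when $\beta>0$: for subcritical nonlinearities $\beta=0$, the exponent $\beta-\varepsilon$ is negative and $e^{(\beta-\varepsilon)u}$ dominates nothing. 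You acknowledge this, but your stated fallback is to ``invoke alternative \emph{(ii)}'' --- which is not available, since the hypotheses of Theorem \ref{uniformboundblowupBALL} are \emph{(i)} \textbf{or} \emph{(ii)}. A nonlinearity such as $h(x,t)=t\log^2(2+t)$ with $a\equiv 1$ satisfies \emph{(i)}, is subcritical ($\beta=0$), is superlinear, but fails \emph{(ii)} for every $\alpha>1$; your argument does not cover it.

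The gap is easily closed, and the paper's route shows how: under \emph{(i)} one never needs the exponential lower bound at all. Superlinearity (\textit{A2}) alone gives, for \emph{every} $M>0$, a threshold $t_0(M)$ with $f(t)\geq Mt$ for $t>t_0$. Splitting $\int_\B u\tilde\varphi_1$ over $\{u\leq t_0\}$ and $\{u>t_0\}$ and using $a\geq a_0$ together with \eqref{asymptotic_GROWTH} yields
\begin{equation*}
\int_\B u\tilde\varphi_1\;\leq\; C(t_0,M)+\frac{2}{a_0M}\int_\B h(x,u)\tilde\varphi_1\;=\;C(t_0,M)+\frac{2\tilde\lambda_1}{a_0M}\int_\B u\tilde\varphi_1,
\end{equation*}
where the last equality is precisely the identity you already established by testing with $\tilde\varphi_1$. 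Choosing $M=4\tilde\lambda_1/a_0$ absorbs the last term into the left-hand side. This works uniformly in $\beta\geq0$ and replaces your Jensen step entirely; your Jensen argument remains a correct (if more elaborate) alternative in the critical sub-case $\beta>0$.
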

\begin{proof}
	The equality in \eqref{localL1bdd} follows simply by testing \eqref{def} with $\varphi=\tilde\varphi_1$:
	$$\int_\B h(x,u)\tilde\varphi_1=\int_\B\Delta u\Delta\varphitilde=\lambdatilde\int_\B u\varphitilde.$$
	\indent \textit{Suppose (i) holds.}	By the superlinearity of $f$, for any $M>0$, there exists $t_0(M)$ such that $f(t)>Mt$ for any $t>t_0$. Therefore by \eqref{asymptotic_GROWTH},
	\begin{align*}
	\int_\B u\tilde\varphi_1&=\int_{\{u\leq t_0\}}u\tilde\varphi_1+\int_{\{u>t_0\}}u\tilde\varphi_1\leq t_0\|\tilde\varphi_1\|_1+\frac1M\int_\B f(u)\tilde\varphi_1\\
	&\leq C_1(t_0)+\frac 1{Ma_0}\int_\B {a(x)}f(u)\tilde\varphi_1\leq C_2(t_0,M)+\frac 2{a_0{M}}\int_\B h(x,u)\tilde\varphi_1.
	\end{align*}
	Choosing now $M=\tfrac{4\tilde\lambda_1}{a_0}$, \eqref{localL1bdd} follows.\\
	\indent\textit{Suppose (ii) holds.} We follow here some ideas of \cite{DST}, see also \cite{Soup}. Let $c_1,c_2$ be positive constants such that
	$$c_1d_\B^2(x)\leq\tilde\varphi_1(x)\leq c_2d_\B^2(x)$$
	for all $x\in\B$, see \cite[Lemma 3]{CS}. By \eqref{2sided} we get the pointwise estimate of the Green function (see \cite[Remark 3]{GS_nav2})
	$$G_\B(x,y)\geq C\log\bigg(2+\frac{d_\B(y)}{|x-y|}\bigg)\bigg(1\wedge\frac{d_\B^2(x)d_\B^2(y)}{|x-y|^{4}}\bigg)\geq C d_\B^2(x)^2 d_\B^2(y).$$
 Using the representation formula, for all solutions $u$ of \eqref{DIRf} there holds
	\begin{equation}\label{u_rappr}
	\begin{split}
	u(x)&
	\geq Cd_\B^2(x)\int_{\B} d_\B^2(y)h(y,u(y))dy\geq Cc_2^{-2}\tilde\varphi_1(x)\int_{\B} \tilde\varphi_1(y)h(y,u)dy.
	\end{split}
	\end{equation}
	Moreover, by \eqref{asymptotic_GROWTH} with $\varepsilon=\tfrac12$ and (\textit{ii}), we have
	\begin{equation*}
	\begin{split}
	\int_{\B} h(x,u)\tilde\varphi_1(x)dx&\geq\frac12\int_{\B} a(x)f(u)\tilde\varphi_1(x)dx-d\int_{\B}\tilde\varphi_1(x)dx\\
	&\geq\int_{\B} a(x)(Cu^\gamma(x)-D)\tilde\varphi_1(x)dx-d\int_{\B}\tilde\varphi_1(x)dx,
	\end{split}
	\end{equation*}
	where $C$ and $D$ are suitable positive constant.
	Therefore, by \eqref{u_rappr},
	\begin{equation*}
	\begin{split}
	&\int_{\B} h(x,u)\tilde\varphi_1(x)dx+C(\|a\|_1,\gamma)\geq C\int_{\B} a(x)u^\gamma(x)\tilde\varphi_1(x)dx\\
	&\geq C\bigg(\int_{\B} a(x)\tilde\varphi_1^{1+\gamma}(x)dx\bigg)\bigg(\int_{\B} h(x,u)\tilde\varphi_1(x)dx\bigg)^\gamma.
	\end{split}
	\end{equation*}
	The inequality \eqref{localL1bdd} follows, noticing that $\gamma>1$ and all constants are positive.
\end{proof}

The aim now is to prove that there exists a uniform neighborhood of the boundary where solutions are decreasing along suitable outer directions. The main tool is a moving-planes technique inspired by \cite{BGW}, which is strongly based on pointwise Green functions estimates of the ball. Indeed, these ensure to get rid of problems connected to the lack of a maximum principle for domains of small measure.\\
\indent Let us introduce some notation: for each $\lambda\in(0,1)$ we set
\begin{equation*}
\Sigma_\lambda:=\{x\in\B\,|\,x_1>1-\lambda\},\quad T_\lambda:=\{x\in\B\,|\,x_1=1-\lambda\},\quad
\Sigma_{\lambda'}:=\{x\in\B\,|\,\bar x\in\Sigma_\lambda\},
\end{equation*}
where $\bar x$ is defined as the reflection of $x$ through $T_\lambda$.
By convexity, it is easy to see that $\Sigma_\lambda\cup\Sigma_{\lambda'}\subset\omegar$ provided $\lambda$ is small enough, where $\bar r$ is defined in (\textit{H3}).

\begin{lem}\label{MovingPlanes}
	There exists $r\in(0,\bar r)$ and $\delta\in (0,\bar\delta)$ such that in $\omega_r:=\{x\in\B\,|\,d(x,\dB)<r\}$ there holds $\nabla u(x)\cdot\theta\leq 0$ for all $|\theta-n(x)|<\delta$ for any solution of \eqref{DIRf}.
\end{lem}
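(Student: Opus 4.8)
The plan is to run the Alexandrov--Serrin moving planes method in the $x_1$-direction, starting from planes near $T_0 = \{x_1 = 1\}$ and pushing them inward, exploiting the fact that for a ball we have the genuine two-sided Green function estimate of Lemma~\ref{2sidedestimateball}, which lets us bypass the failure of the maximum principle on sets of small measure. Fix the direction $e_1$; for $\lambda\in(0,1)$ small set $w_\lambda(x) := u(x) - u(\bar x)$ on $\Sigma_\lambda$, where $\bar x = (2(1-\lambda)-x_1,x_2,x_3,x_4)$ is the reflection through $T_\lambda$. Since $u=u_n=0$ on $\dB$ and $\B$ is a ball, a standard computation gives that $w_\lambda \in H^2_0(\Sigma_\lambda)$ and satisfies $\Delta^2 w_\lambda = h(x,u(x)) - h(\bar x, u(\bar x))$ in $\Sigma_\lambda$. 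The goal is to show $w_\lambda \le 0$ in $\Sigma_\lambda$ for all small $\lambda$, which immediately yields $\partial_{x_1} u \le 0$ on $T_\lambda$ for $\lambda \le r$; a rotation of the argument around the boundary, using the uniform geometry of the sphere, then upgrades this to $\nabla u(x)\cdot\theta\le 0$ for all $|\theta - n(x)| < \delta$ and $x\in\omega_r$, which is exactly the claim.

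The core estimate is the following: writing $w = w_\lambda$ and using the Green representation on $\Sigma_\lambda$ (with its own biharmonic Green function $G_{\Sigma_\lambda}$, which by domain monotonicity is dominated by $G_\B$), one has
\begin{equation*}
	w(x) = \int_{\Sigma_\lambda} G_{\Sigma_\lambda}(x,y)\,\big(h(y,u(y)) - h(\bar y, u(\bar y))\big)\,dy.
\end{equation*}
On the region where $w(y) > 0$ one splits $h(y,u(y)) - h(\bar y, u(\bar y)) = [h(y,u(y)) - h(y,u(\bar y))] + [h(y,u(\bar y)) - h(\bar y,u(\bar y))]$; the second bracket is $\le 0$ by the monotonicity-in-$x$ hypothesis (\textit{H3b}) since $\bar y$ is obtained from $y$ by moving in a near-outer-normal direction (here is where $\lambda$ must be small so that the reflection stays in $\omegar$ and the relevant direction lies in the $\bar\delta$-cone), while the first bracket is controlled by the local Lipschitz-in-$t$ bound from (\textit{H1}) together with the $L^p$ control on $h(\cdot,u)$ coming from the Adams inequality. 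Combining with the pointwise bound $|G_{\Sigma_\lambda}(x,y)| \le G_\B(x,y) \le C\log(2 + |x-y|^{-1}) \in L^q$ for every $q<\infty$, one gets, for the positive part $w^+$,
\begin{equation*}
	\|w^+\|_{L^\infty(\Sigma_\lambda)} \le C(u)\,|\Sigma_\lambda^+|^{\sigma}\,\|w^+\|_{L^\infty(\Sigma_\lambda)}
\end{equation*}
for some $\sigma>0$, where $\Sigma_\lambda^+ = \{w>0\}\subset\Sigma_\lambda$. Since $|\Sigma_\lambda|\to 0$ as $\lambda\to 0$, for $\lambda$ below a threshold $r$ the constant $C(u)|\Sigma_\lambda|^\sigma < 1$, forcing $w^+\equiv 0$, i.e. $w_\lambda\le 0$.

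The delicate point — and the one I expect to be the main obstacle — is that the constant $C(u)$ above depends on the solution $u$ through the local Lipschitz constant of $h(y,\cdot)$ on the range of $u$, which a priori is not uniform: this is precisely the circularity one must avoid, since we do not yet know $u$ is bounded. The resolution, following \cite{BGW}, is that the threshold $r$ is allowed to depend on $u$ at this stage — Lemma~\ref{MovingPlanes} as stated does not claim uniformity of $r$ in $u$, only existence of \emph{some} $r(u)$; the uniform neighborhood in Proposition~\ref{NEWSection1} is then extracted afterwards by a separate compactness/contradiction argument combined with the energy bound of Lemma~\ref{lemmaDir1-B}. Alternatively, one replaces the crude Lipschitz bound by the monotonicity (\textit{H3a}): since $h(y,\cdot)$ is increasing, on $\{w>0\}$ (where $u(y) > u(\bar y)$) we have $h(y,u(y)) - h(y,u(\bar y)) \ge 0$, so this term has the \emph{wrong} sign for a direct maximum principle but can instead be absorbed by testing the equation for $w$ against $w^+$ and using that $\Delta^2$ is coercive on $H^2_0(\Sigma_\lambda)$ with constant blowing up like a negative power of $|\Sigma_\lambda|$ — again giving smallness for $\lambda$ small. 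Either route closes the argument; the remaining routine work is the rotation/covering step turning the one-directional monotonicity into the cone statement, which is purely geometric and uses only that $\partial\B$ is smooth and compact.
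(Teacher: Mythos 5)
Your overall plan (move planes in the $x_1$-direction, then rotate) matches the paper's, but the core analytic step you propose is precisely the one that breaks down for fourth-order operators, and the paper's proof is built to avoid it. First, the claim that $w_\lambda=u-u(\bar\cdot)\in H^2_0(\Sigma_\lambda)$ is false: on $T_\lambda$ one has $w_\lambda=0$ but not $\partial_n w_\lambda=0$, and on $\partial\Sigma_\lambda\cap\partial\B$ one only has $w_\lambda=-u(\bar x)\le 0$, not $w_\lambda=0$. For $-\Delta$ a signed boundary datum is enough; for $\Delta^2$ on a spherical cap there is no comparison principle, so sign information on the boundary gives nothing. Second, the inequality $G_{\Sigma_\lambda}\le G_\B$ is not available: the biharmonic Dirichlet Green function is not monotone under domain inclusion, and positivity of $G_{\Sigma_\lambda}$ on a cap is not even known. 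Third, your fallback of testing against $w^+$ fails because truncation does not preserve $H^2$: $w^+\notin H^2_0(\Sigma_\lambda)$ in general, which is the classical obstruction to energy/truncation methods in the polyharmonic setting. Finally, the lemma \emph{does} assert a radius $r$ uniform over all solutions (``there exists $r$ \dots for any solution''), so deferring uniformity to a later compactness step is not consistent with the statement being proved.

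The paper circumvents all of this by never posing a boundary value problem on the cap. Following \cite{BGW}, it represents $u$ and $\partial_{x_1}u$ by the Green function of the \emph{whole ball}, folds the integral over the reflection so that each $y\in S_\lambda$ is paired with $\bar y$, uses (\textit{H3a})--(\textit{H3b}) to get $h(y,u(y))\ge h(\bar y,u(\bar y))$ on $S_\lambda$ (note the orientation: the comparison is run on the large piece $S_\lambda$, where $u(y)\ge u(\bar y)$, so the monotonicity in $t$ enters with the \emph{right} sign, not the wrong one as in your sketch), and then invokes the explicit pointwise reflection inequalities for $G_\B$ and $\partial_{x_1}G_\B$ (Lemmas \ref{BGW_lemma} and \ref{FGW_lemma}, which rely on Boggio's formula). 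The starting point of the plane motion is obtained from the strict positivity of the second outer-normal derivative at the boundary (a consequence of the two-sided estimate \eqref{2sided}), a step absent from your argument. This yields the threshold $\bar r/2$ depending only on (\textit{H3}), hence uniform in $u$, with no Lipschitz constant of $h$ and no smallness-of-measure absorption entering anywhere.
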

\begin{proof}
	 Using a compactness argument, the statement may be reduced to prove that there exists a neighborhood $\mathcal U$ of $X_1:=(1,0,0,0)$ such that $\frac{\partial u}{\partial x_1}(x)<0$ for any $x\in\mathcal U$ and any $u$ solution of \eqref{DIRf}. Define the set
	 $$\Gamma_u:=\left\{\lambda\in(0,1)\,|\,u(x)<u(\bar x)\; \forall x\in\Sigma_\lambda,\;\frac{\partial u}{\partial x_1}(x)<0\; \forall x\in T_\lambda\right\}.$$
	 Our aim is to prove that for any solution $u$ there holds $\Gamma_u\supseteq(0,\tfrac{\bar r}2)$. Setting $S_\lambda:=\B\setminus(\Sigma_\lambda\cup T_\lambda)$, and extending $u$ by 0 outside $\B$, then we can rewrite $\Gamma_u$ as
	 $$\Gamma_u:=\left\{\lambda\in(0,1)\,|\,u(x)>u(\bar x)\; \forall x\in S_\lambda,\;\frac{\partial u}{\partial x_1}(x)<0\; \forall x\in T_\lambda\right\}.$$
	 The asymptotic behaviour of the Green function in Lemma \ref{2sidedestimateball} implies that for any $x_0\in\dB$ and for any outer direction $\theta$ we have $\left(\frac{\partial}{\partial\theta}\right)^2u(x_0)>0$ (see \cite[Theorem 3.2]{GS_pp}). In particular, choosing $x_0=X_1$ and $\theta=x_1$, one finds $\frac{\partial^2 u}{\partial x_1^2}(x)>0$ in a small neighborhood of $X_1$. Thus, by zero Dirichlet boundary conditions, one infers $\frac{\partial u}{\partial x_1}(x)<0$ up to a smaller neighborhood. This means that for any $u$, the set $\Gamma_u$ is nonempty. Let us suppose by contradiction that
	 $$\lambdabar:=\sup\big\{\lambda\in (0,1)\, | \,(0,\lambda)\subseteq \Gamma_u \big\}<\frac{\bar r}2.$$
	 Then, for any $x\in S_{\lambdabar}$ we have $u(x)\geq u(\bar x)$. We firstly claim that $\frac{\partial u}{\partial x_1}(x)<0$ on $T_{\lambdabar}$. Indeed, after extending $h$ and $G_\B$ to 0 outside their domains,
	 \begin{equation}\label{dudx1}
	 \begin{split}
	 \frac{\partial u}{\partial x_1}(x)&=\int_{\Sigma_	{\lambdabar}}\frac{\partial G_\B}{\partial x_1}(x,y)h(y,u(y))dy+\int_{S_{\lambdabar}}\frac{\partial G_\B}{\partial x_1}(x,y)h(y,u(y))dy\\
	 &=\int_{S_{\lambdabar}}\left(\frac{\partial G_\B}{\partial x_1}(x,y)h(y,u(y))+\frac{\partial G_\B}{\partial x_1}(x,\bar y)h(\bar y,u(\bar y))\right)dy.
	 \end{split}
	 \end{equation}
	 As in $S_{\lambdabar}$ there holds $u(y)\geq u(\bar y)$ and noticing that $\Sigma_{\lambdabar}\cup\Sigma_{\lambdabar}'\subset\omegar$ as $\lambdabar<\frac{\bar r}2$, then by assumption (\textit{H3}) we deduce
	 \begin{equation*}
	 h(y,u(y))\stackrel{(H3a)}{\geq} h(y,u(\bar y))\stackrel{(H3b)}{\geq} h(\bar y,u(\bar y))\qquad\mbox{for all }y\in S_{\lambdabar}.
	 \end{equation*}
	 More precisely, we may also infer that one of the two inequalities
	 \begin{equation}\label{h_hbar}
	 h(y,u(y))\geq h(\bar y,u(\bar y))\geq 0\qquad\mbox{for all }y\in S_{\lambdabar}
	 \end{equation}
	 is strict for a set $\mathcal O_{\lambdabar}$ of positive measure. Indeed, if not, we would have $h(y,u(y))=0$ a.e. on $S_{\lambdabar}$ and in turns by \eqref{h_hbar} $h(y,u(y))=0$ a.e. on $\B$. Thus, $u$ would be a positive solution for $\Delta^2u=0$ in $\B$ with zero Dirichlet boundary conditions, a contradiction.\\
	 \indent Moreover, we need the following pointwise properties of the Green function $G_\B$.
	 \begin{lem}[\cite{BGW}, Lemma 3]\label{BGW_lemma}
	 	Let $\lambda\in(0,1)$. For all $x\in T_{\lambda}$ and $y\in S_\lambda$ there hold
	 	\begin{equation}\label{BGW_1}
	 	\frac{\partial G_\B}{\partial x_1}(x,y)<0;
	 	\end{equation}
	 	\begin{equation}\label{BGW_2}
	 	\frac{\partial G_\B}{\partial x_1}(x,y)+\frac{\partial G_\B}{\partial x_1}(x,\bar y)\leq 0.
	 	\end{equation}
	 \end{lem}
	 \noindent Hence, if the first inequality in \eqref{h_hbar} is strict in $\mathcal O_{\lambdabar}$, from \eqref{dudx1} we have
	 \begin{equation*}
	 \frac{\partial u}{\partial x_1}(x)\stackrel{\eqref{BGW_1}}{<}\int_{S_{\lambdabar}}\left(\frac{\partial G_\B}{\partial x_1}(x,y)+\frac{\partial G_\B}{\partial x_1}(x,\bar y)\right)h(\bar y,u(\bar y))dy\stackrel{\eqref{BGW_2}}{\leq}0;
	 \end{equation*}
	 otherwise, if the second inequality in \eqref{h_hbar} is strict in $\mathcal O_{\lambdabar}$,
	 \begin{equation*}
	 \frac{\partial u}{\partial x_1}(x)\stackrel{\eqref{BGW_1}}{\leq}\int_{S_{\lambdabar}}\left(\frac{\partial G_\B}{\partial x_1}(x,y)+\frac{\partial G_\B}{\partial x_1}(x,\bar y)\right)h(\bar y,u(\bar y))dy\stackrel{\eqref{BGW_2}}{<}0;
	 \end{equation*}
	 As in both cases the sign is strict, our claim is proved. Moreover, with the same compactness argument as in \cite[Lemma 8]{BGW}, one can actually slide a little bit inwards the hyperplane and obtain the same sign for the derivative of $u$: one shows indeed that there exists $\gamma\in(0,1-\lambdabar)$ such that
	 \begin{equation}\label{dudx1_big}
	 \frac{\partial u}{\partial x_1}(x)<0\;\; \mbox{on}\; T_s \quad\mbox{for any}\; s\in (\lambdabar,\lambdabar+\gamma).
	 \end{equation}
	 With such information, we want to prove a contradiction with the maximality of $\lambdabar$. Again, we need some pointwise properties of $G_\B$.
	 \begin{lem}[\cite{FGW}, Lemma 3]\label{FGW_lemma}
	 	Let $\lambda\in(0,1)$. For all $x,y\in S_\lambda$, $x\not=y$, there hold
	 	\begin{equation}\label{FGW_1}
	 	G_\B(x,y)>\max\{G_\B(x,\bar y),G_\B(\bar x,y)\};
	 	\end{equation}
	 	\begin{equation}\label{FGW_2}
	 	G_\B(x,y)-G_\B(\bar x,\bar y)>|G_\B(x,\bar y)-G_\B(\bar x,y)|.
	 	\end{equation}
	 \end{lem}
	 \noindent We claim that $u(x)>u(\bar x)$ for all $x\in S_{\lambdabar}$. Indeed,
	 \begin{equation*}
	 \begin{split}
	 u(x)-u(\bar x)&=\int_\B \left[G_\B(x,y)-G_\B(\bar x,y)\right]h(y,u(y))dy\\
	 &=\int_{S_{\lambdabar}}\big(\left[G_\B(x,y)-G_\B(\bar x,y)\right]h(y,u(y))+\left[G_\B(x,\bar y)-G_\B(\bar x,\bar y)\right]h(\bar y,u(\bar y))\big)dy.
	 \end{split}
	 \end{equation*}
	 Then, if the first inequality in \eqref{h_hbar} is strict in $\mathcal O_{\lambdabar}$, then we get
	 \begin{equation}\label{ux_ubarx}
	 u(x)-u(\bar x)\stackrel{\eqref{FGW_1}}{>}\int_{S_{\lambdabar}}\left[G_\B(x,y)-G_\B(\bar x,y)+G_\B(x,\bar y)-G_\B(\bar x,\bar y)\right]h(\bar y,u(\bar y))dy\stackrel{\eqref{FGW_2}}{\geq} 0.
	 \end{equation}
	 Otherwise, if it is the second inequality in \eqref{h_hbar} to be strict in $\mathcal O_{\lambdabar}$, we obtain the same result exchanging the strict signs in \eqref{ux_ubarx}. This, combined with \eqref{dudx1_big}, {by means of a standard compactness argument}, shows that $\Gamma_u\supseteq(0,\lambdabar+\varepsilon)$ for a suitable $\varepsilon\in(0,\gamma)$, contradicting the maximality of $\lambdabar$.
\end{proof}

\vskip0.2truecm
The behaviour of solutions near the boundary found in Lemma \ref{MovingPlanes} and the local $L^1$ bound of Lemma \ref{lemmaDir1-B} allow to prove Proposition \ref{NEWSection1} in the following two lemmas.

\begin{lem}\label{boundaryestDIR}
	There exists a neighborhood $\omega$ of $\dB$ and $C_1>0$ such that $\|u\|_{L^\infty(\omega)}\leq C_1$ for all weak solutions $u$ of \eqref{DIRf}.
\end{lem}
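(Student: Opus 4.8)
The plan is to follow \cite{dFLN}: I will use the boundary monotonicity of Lemma \ref{MovingPlanes} to show that a solution which is large at some point near $\dB$ must be large on a \emph{fixed} portion of the interior, where $\varphitilde$ is bounded away from $0$; this contradicts the local energy estimate of Lemma \ref{lemmaDir1-B}. I will use freely that every weak solution $u$ is nonnegative (because $h\geq0$ and $\Delta^2$ with Dirichlet conditions is positivity preserving on the ball) and that $u\in C^{3}(\overline\B)$: indeed $h(\cdot,u(\cdot))\in L^p(\B)$ for all $p$ by \eqref{asymptotic_GROWTH}, \eqref{fexpbeta} and the Adams inequality, so elliptic regularity gives $u\in W^{4,p}$ for every $p$, and in particular the pointwise quantities $u(x)$ and $\nabla u(x)$ used below make sense.

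First I would fix $r,\delta$ as in Lemma \ref{MovingPlanes} and rephrase it in terms of the inward normal: for every $z\in\omega_r$ and every unit vector $v$ with $|v+n(z)|<\delta$ one has $\nabla u(z)\cdot v\geq0$, i.e.\ $u$ does not decrease when moving into $\B$ along directions close to the inward normal. The geometric heart of the argument is the following claim, to be established after possibly shrinking $r$ and $\delta$: there exist $r_0\in(0,r/4)$ and $\kappa>0$, depending only on $\B$, $r$, $\delta$, such that for every $x\in\omega:=\{z\in\B : d_\B(z)<r_0\}$ the truncated cone
\[
A_x:=\Big\{\,x+tv\ :\ t>0,\ |v+n(x)|<\tfrac{\delta}{2}\,\Big\}\ \cap\ \Big\{\,z\in\B\ :\ \tfrac{r}{2}<d_\B(z)<r\,\Big\}
\]
satisfies $|A_x|\geq\kappa$ and $u\geq u(x)$ on $A_x$. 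To prove it: for a ball, moving from $x$ towards the centre strictly increases $d_\B$, so for $r$ small each half-line $\{x+tv\}$ with $|v+n(x)|<\delta/2$ crosses the spherical shell $\{r/2<d_\B<r\}$ along a segment of length comparable to $r$; integrating over the admissible cone of directions yields $|A_x|\geq\kappa$ uniformly in $x\in\omega$. Moreover, since $n$ is Lipschitz on $\dB$ and the inward displacement needed to reach depth $r$ has length $O(r)$, for $r$ small the inward normal $-n(x+tv)$ stays within $\delta/2$ of $-n(x)$ along the whole segment while it lies in $\omega_r$; hence $|v+n(x+tv)|<\delta$ there, so $t\mapsto u(x+tv)$ is nondecreasing and $u(x+tv)\geq u(x)$ for every point of the segment, in particular on $A_x$.

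Granting the claim, the conclusion is immediate. The set $K:=\{z\in\B : r/2\leq d_\B(z)\leq r\}$ is compact in $\B$, so $c_0:=\min_K\varphitilde>0$ since $\varphitilde>0$ in $\B$. For any $x\in\omega$ and any weak solution $u$, using $u\geq0$, $A_x\subset K$ and $u\geq u(x)$ on $A_x$,
\[
\lambdatilde\int_\B u\,\varphitilde\ \geq\ \lambdatilde\int_{A_x} u\,\varphitilde\ \geq\ \lambdatilde\,c_0\,|A_x|\,u(x)\ \geq\ \lambdatilde\,c_0\,\kappa\,u(x),
\]
and since by Lemma \ref{lemmaDir1-B} the left-hand side is bounded by a constant $C$ independent of $u$, we obtain $u(x)\leq C_1:=C(\lambdatilde c_0\kappa)^{-1}$ for all $x\in\omega$.

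I expect the only genuinely delicate point to be the geometric claim, and within it the task of keeping the admissibility condition $|v+n(x+tv)|<\delta$ valid along the whole segment uniformly for $x\in\omega$; this is precisely where the explicit geometry of the ball is used, and it is why one must pass to the smaller neighbourhood $\omega$ and, if needed, further reduce $r$ and $\delta$. Everything else is a direct combination of Lemmas \ref{lemmaDir1-B} and \ref{MovingPlanes}.
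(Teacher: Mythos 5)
Your proof is correct and follows essentially the same route as the paper: the paper also combines Lemma \ref{MovingPlanes} with the \cite{dFLN}-type construction of a set $I_x\subseteq\B\setminus\omega_{r/2}$ of measure at least $\gamma$ on which $u\geq u(x)$, and then concludes via the weighted $L^1$ bound of Lemma \ref{lemmaDir1-B} and the positive lower bound for $\varphitilde$ on that compact set. The only difference is that you spell out the cone construction and the uniform admissibility of the directions, which the paper leaves to the citation.
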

\begin{proof}
	By Lemma \ref{MovingPlanes}, arguing as in \cite{dFLN}, one may infer that for every $x\in\omega_r$ there exists a set $I_x$ and a constant $\gamma>0$ such that $|I_x|\geq\gamma$, $I_x\subseteq \B\setminus\omega_\frac{r}{2}$ and $u(y)\geq u(x)$ for all $y\in I_x$.
	Taking $x\in\omega_r$, by positivity of $\tilde\varphi_1$ and Lemma \ref{lemmaDir1-B},
	$$C\geq\int_\B h(x,u)\tilde\varphi_1 dx\geq\tilde\lambda_1\int_{\B\setminus\omega_\frac{r}{2}} u\tilde\varphi_1\geq\tilde\lambda_1\min_{\B\setminus\omega_\frac{r}{2}}\tilde\varphi_1\int_{I_x}u(y)dy\geq c(\B)\gamma u(x),$$
	which implies the uniform $L^\infty$ boundedness of $u$ in $\omega_r$.
\end{proof}

\begin{lem}\label{uniformLambda}
	There exists a constant $\Lambda>0$ such that $\int_\B h(x,u)dx\leq\Lambda$ for all weak solutions $u$ of \eqref{DIRf}.
\end{lem}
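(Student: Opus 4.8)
The plan is to upgrade the weighted bound of Lemma~\ref{lemmaDir1-B} to an unweighted one by splitting $\B$ into the boundary layer $\omega$ of Lemma~\ref{boundaryestDIR}, where solutions are uniformly bounded, and the complement $\B\setminus\omega$, where the weight $\varphitilde$ stays away from zero.

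First I would observe that, with $\omega$ as in Lemma~\ref{boundaryestDIR} (which we may take so that $\overline{\B\setminus\omega}$ is a compact subset of $\B$), every weak solution $u$ of \eqref{DIRf} satisfies $0\le u\le C_1$ on $\omega$; hence by assumption (\textit{H1}) we have $h(\cdot,u(\cdot))\le\|h\|_{L^\infty(\B\times[0,C_1])}=:M$ a.e.\ on $\omega$, so that $\int_\omega h(x,u)\,dx\le M|\B|$, a bound independent of $u$.

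On the complement, since $\varphitilde$ is continuous and strictly positive on $\B$, it has a positive minimum $c_0:=\min_{\overline{\B\setminus\omega}}\varphitilde>0$; using $h\ge0$ and Lemma~\ref{lemmaDir1-B},
\[
\int_{\B\setminus\omega}h(x,u)\,dx\le\frac1{c_0}\int_{\B\setminus\omega}h(x,u)\varphitilde\,dx\le\frac1{c_0}\int_{\B}h(x,u)\varphitilde\,dx\le\frac{C}{c_0}.
\]
Summing the two estimates gives $\int_\B h(x,u)\,dx\le M|\B|+C/c_0=:\Lambda$, which proves the lemma and, together with Lemma~\ref{boundaryestDIR}, completes the proof of Proposition~\ref{NEWSection1}. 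I do not expect any genuine difficulty here: all the analytic work is already contained in Lemmas~\ref{lemmaDir1-B} and~\ref{boundaryestDIR}, and the only thing to notice is the elementary fact that the positive eigenfunction $\varphitilde$ cannot degenerate away from $\dB$.
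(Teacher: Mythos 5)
Your proof is correct and is essentially identical to the paper's: the same decomposition of $\B$ into the boundary layer $\omega$ (where $h(\cdot,u)$ is bounded via Lemma~\ref{boundaryestDIR} and (\textit{H1})) and its complement (where $\varphitilde\geq m(\omega)>0$ allows one to invoke the weighted bound of Lemma~\ref{lemmaDir1-B}). No further comment is needed.
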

\begin{proof}
	Let $\omega$ and $C_1$ be as in Lemma \ref{boundaryestDIR}. By (\textit H1), Lemma \ref{boundaryestDIR}, the positivity of $\tilde\varphi_1$ and Lemma \ref{lemmaDir1-B}, one has
	\begin{equation*}
	\begin{split}
	\int_\B h(x,u)dx&=\int_{\omega} h(x,u)dx+\int_{\B\setminus\omega} h(x,u)dx\\
	&\leq  \|h\|_{L^{\infty}(\Omega\times[0,C_1])}+\dfrac{1}{m(\omega)}\int_{\B\setminus\omega} h(x,u)\tilde\varphi_1dx\\
	&\leq \|h\|_{L^{\infty}(\Omega\times[0,C_1])}+\dfrac{1}{m(\omega)}\int_\B h(x,u)\tilde\varphi_1dx\leq \Lambda(\B,h)
	\end{split}
	\end{equation*}
	having defined $m(\omega):=\min_{\B\setminus\omega}\tilde\varphi_1>0$.
\end{proof}

\section{Uniform bounds inside the domain}\label{SectionBlowup}
The main goal of this section is to show how a scaling argument ensures that solutions of \eqref{DIRf} are uniformly bounded away from the boundary. This, together with the uniform boundedness near $\dOmega$ obtained in Proposition \ref{NEWSection1} will prove Theorem \ref{uniformboundblowupBALL}.

\indent Although the analysis that we present here concerns the problem \eqref{DIRf} in $\B$, in this section we use the notation $\Omega$ to indicate the ball. This choice is motivated by Section \ref{SectionDomains}, where we show that the same argument can be applied also for general bounded smooth domains to prove Theorem \ref{uniformboundblowupOMEGA}.
\vskip0.2truecm
Let us suppose by contradiction that there exists a sequence $(u_k)_{k\in\N}$ of solutions of problem \eqref{DIRf} and a sequence of maximum points $(x_k)_{k\in\N}\subset\Omega$ such that
\begin{equation}\label{maxima}
u_k(x_k)=\|u_k\|_{L^\infty(\Omega)}=:M_k\nearrow+\infty.
\end{equation}
Since $\Omega$ is bounded, up to a subsequence $x_k\to x_\infty$ with $d(x_\infty,\dOmega)>0$, by Proposition \ref{NEWSection1}. Moreover, we define the 
rescaled functions $v_k:\Omega_k\to\R$ as
\begin{equation}\label{vk}
v_k(x):=u_k(x_k+\mu_kx)-M_k,
\end{equation}
where the scaling is
\begin{equation}\label{muk}
\mu_k:=\dfrac1{(f(M_k))^{1/4}}\rightarrow0\quad\mbox{ as }k\to+\infty
\end{equation}
and the expanding domains are $\Omega_k:=\frac{\Omega-x_k}{\mu_k}$. Notice that $x_\infty\in\Omega$ implies $\Omega_k\nearrow\R^4$. Then we have
\begin{equation}\label{Delta2vk1}
\begin{split}
|\Delta^2v_k(x)|&=\mu_k^4|(\Delta^2u_k)(x_k+\mu_kx)|=\dfrac{h(x_k+\mu_kx,u_k(x_k+\mu_kx))}{f(M_k)}\\
&\leq(1+\varepsilon)a(x_k+\mu_kx)\dfrac{f(u_k(x_k+\mu_kx))}{f(M_k)}+\dfrac{d_\varepsilon}{M_k},
\end{split}
\end{equation}
by \eqref{asymptotic_GROWTH}, so it is uniformly bounded.
\begin{lem}\label{grad_limitato}
	Let $x\in B_R(0)$. There holds $|\nabla^i v_k(x)|\leq C(R)$ for any $i\in\{0,1,2,3\}$.
\end{lem}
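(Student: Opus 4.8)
The claim is a uniform $C^3$-bound for the rescaled functions $v_k$ on every ball $B_R(0)$, once $k$ is large enough that $B_R(0)\subset\Omega_k$. The natural strategy is a bootstrap argument based on the estimate \eqref{Delta2vk1} together with the interior $L^p$ and Schauder theory for the operator $\Delta^2$. The two facts I would use are: (a) by construction $v_k\le 0$ on $\Omega_k$ and $v_k(0)=0$, so $\|v_k\|_{L^\infty(B_R(0))}=\sup_{B_R(0)}|v_k|$ is not yet known to be bounded, but $v_k$ is \emph{nonpositive}; (b) by \eqref{Delta2vk1} and monotonicity of $f$ on $[M,+\infty)$ (assumption (\textit{A1})), since $u_k(x_k+\mu_k x)\le M_k$ we have $f(u_k(x_k+\mu_k x))\le f(M_k)$ for $u_k(x_k+\mu_kx)\ge M$, hence $|\Delta^2 v_k|\le (1+\varepsilon)\|a\|_\infty + d_\varepsilon/M_k + C \le C$ uniformly on $\Omega_k$ (the values where $u_k<M$ contribute only a bounded amount since $f$ is bounded on $[0,M]$). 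So $g_k:=\Delta^2 v_k$ is bounded in $L^\infty_{loc}$, uniformly in $k$.

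\textbf{Key steps.} First, fix $R>0$ and work on $B_{4R}(0)\subset\Omega_k$ for $k$ large. Write $v_k=w_k+z_k$ on $B_{4R}(0)$, where $w_k$ solves $\Delta^2 w_k=g_k$ in $B_{4R}(0)$ with zero Dirichlet data and $z_k$ is biharmonic in $B_{4R}(0)$. Since $\|g_k\|_{L^\infty(B_{4R})}\le C(R)$, elliptic $L^p$ estimates give $\|w_k\|_{W^{4,p}(B_{4R})}\le C(R,p)$ for every $p<\infty$, hence by Sobolev embedding $\|w_k\|_{C^{3,\alpha}(B_{4R})}\le C(R)$; in particular $\|w_k\|_{L^\infty(B_{4R})}\le C(R)$. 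Then $z_k=v_k-w_k$ satisfies $z_k\le -w_k\le C(R)$ on $B_{4R}$, i.e. $z_k$ is biharmonic and bounded above. This is not quite enough on its own, so the key additional input is the uniform $L^1$ energy bound $\int_\Omega h(x,u_k)\le\Lambda$ from Proposition \ref{NEWSection1}: rescaling, $\int_{\Omega_k}|\Delta^2 v_k|\,dx = \int_\Omega h(x,u_k)\,dx/f(M_k)\cdot\mu_k^{-4}\cdot\mu_k^4 \le \Lambda$, wait — more carefully, $\int_{B_R(0)}|g_k|\,dx=\mu_k^{-4}\int_{B_{\mu_k R}(x_k)} h(x,u_k)\,dx \le \mu_k^{-4}\Lambda$; this blows up, so the correct bound to extract is simply the pointwise one above, and the boundedness of $z_k$ from below must instead come from controlling $v_k(0)=0$ together with mean-value-type estimates for the biharmonic function $z_k$. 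Concretely: $z_k(0)=v_k(0)-w_k(0)=-w_k(0)$ is bounded; $z_k$ is biharmonic and bounded above on $B_{4R}$; a biharmonic function bounded above on a ball with controlled value at the center has, by the sub-mean-value property of $-\Delta z_k$ (or by representation via the Poisson-type kernel for $\Delta^2$ on the ball, or simply by a Harnack-type inequality for the pair $(z_k,-\Delta z_k)$), a uniform lower bound on $B_{2R}$. Thus $\|z_k\|_{L^\infty(B_{2R})}\le C(R)$, hence $\|v_k\|_{L^\infty(B_{2R})}\le C(R)$, which is the case $i=0$. Finally, bootstrap: now that $v_k$ is bounded in $L^\infty(B_{2R})$ and $\Delta^2 v_k=g_k$ is bounded in $L^\infty(B_{2R})$, interior $L^p$ estimates on $B_{2R}$ give $\|v_k\|_{W^{4,p}(B_R)}\le C(R,p)$ for all $p$, and Sobolev embedding yields $\|v_k\|_{C^{3,\alpha}(B_R)}\le C(R)$ for every $\alpha\in(0,1)$, which in particular gives $|\nabla^i v_k(x)|\le C(R)$ for $i=0,1,2,3$ on $B_R(0)$.

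\textbf{Main obstacle.} The delicate point is the step from the upper bound on the biharmonic part $z_k$ (which is immediate) to a two-sided bound. Unlike the second-order case, a biharmonic function bounded above need not be bounded below in general; what saves us is the extra information $z_k(0)=-w_k(0)=O(1)$ together with working on a ball, where one has the explicit Green/Poisson kernels and the positivity-preserving property. I would make this rigorous either by the mean value property $z_k(0)=\fint_{\partial B_\rho} z_k\,d\sigma - c\rho^2\fint_{\partial B_\rho}\Delta z_k$ type identities, controlling $\Delta z_k=\Delta v_k-\Delta w_k$ via the already-established interior estimate on $\Delta v_k$ at an intermediate scale, or — cleaner — by first obtaining the $W^{4,p}_{loc}$ bound on $v_k$ directly from $\Delta^2 v_k\in L^\infty_{loc}$ \emph{plus} the pointwise bound $v_k\le 0$ and a Moser-type local boundedness estimate for the fourth-order operator applied to $-v_k\ge 0$. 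Once the $L^\infty_{loc}$ bound on $v_k$ is in hand, the rest is a routine Calderón–Zygmund plus Schauder bootstrap.
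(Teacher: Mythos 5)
There is a genuine gap at the decisive step, namely the lower bound for the biharmonic part $z_k$ (equivalently, the case $i=0$). The assertion that a biharmonic function on $B_{4R}(0)$ which is bounded above and has a controlled value at the origin admits a uniform lower bound on $B_{2R}(0)$ is false: $z_\lambda(x):=-\lambda|x|^2$ is biharmonic, satisfies $z_\lambda\le 0=z_\lambda(0)$, yet $\inf_{B_{2R}(0)}z_\lambda=-4\lambda R^2\to-\infty$ as $\lambda\to\infty$. You acknowledge the difficulty, but neither repair you sketch closes it: controlling $\Delta z_k$ through ``the already-established interior estimate on $\Delta v_k$'' is circular, since that estimate is precisely the case $i=2$ of the lemma being proved; and a Moser-type local boundedness estimate applied to $-v_k\ge0$ would require a local integral bound on $-v_k=M_k-u_k(x_k+\mu_k\,\cdot)$, whose $L^1(B_{2R}(0))$ norm is of order $M_k$ and hence unbounded. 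The purely local data you invoke --- $\Delta^2v_k$ bounded, $v_k\le0$, $v_k(0)=0$ --- are insufficient, as the example shows; some nonlocal information must enter.

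The missing ingredient is the global energy bound $\int_\Omega h(x,u_k)\,dx\le\Lambda$ of Proposition \ref{NEWSection1}, used not on the rescaled right-hand side but through the Green representation in the \emph{original} variables. The paper's proof writes, for $i=1,2,3$,
\[
|\nabla^iv_k(x)|=\mu_k^i\Big|\int_\Omega\nabla^i_xG_\Omega(x_k+\mu_kx,y)\,h(y,u_k(y))\,dy\Big|,
\]
splits $\Omega$ into $B_{2R\mu_k}(x_k)$ and its complement, and uses Lemma \ref{Greenspropetry}: on the complement $|x_k+\mu_kx-y|\ge R\mu_k$, so the $\Lambda$-bound yields a contribution $CR^{-i}\Lambda$; on $B_{2R\mu_k}(x_k)$ one uses $f(u_k)\le f(M_k)=\mu_k^{-4}$ together with the local integrability of $|x-y|^{-i}$ for $i\le3$. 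The case $i=0$ then follows from $v_k(0)=0$ and the bound on $\nabla v_k$. Any version of your decomposition argument would still have to import exactly this far-field estimate to exclude the $-\lambda|x|^2$ scenario for $z_k$, so the Green-function step cannot be bypassed.
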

\begin{proof}
Firstly, let us take $i\in \{1,2,3\}$. By the representation formula for derivatives and Lemma \ref{Greenspropetry},
	\begin{equation*}
	\begin{split}
	|\nabla^i v_k(x)|&=|\mu_k^i\nabla^i u_k(x_k+\mu_kx)|=\mu_k^i\bigg|\int_\Omega\nabla^i_xG_\Omega(x_k+\mu_kx,y)h(y,u_k(y))dy\bigg|\\
	&\leq C\mu_k^i\int_{\Omega\setminus B_{2R\mu_k}(x_k)}\dfrac{h(y,u_k(y))}{|x_k+\mu_kx-y|^i}dy+C\mu_k^i\int_{B_{2R\mu_k}(x_k)}\dfrac{h(y,u_k(y))}{|x_k+\mu_kx-y|^i}dy.
	\end{split}
	\end{equation*}
	In $\Omega\setminus B_{2R\mu_k}(x_k)$ there holds $|x_k+\mu_kx-y|\geq|y-x_k|-\mu_k|x|\geq2R\mu_k-R\mu_k=R\mu_k$, while in $B_{2R\mu_k}(x_k)$ we have $f(u_k(y))\leq f(M_k)=\mu_k^{-4}$ (this follows from (\textit{A1})). Hence, by Proposition \ref{NEWSection1} and \eqref{asymptotic_GROWTH} with $\varepsilon=1$,
	\begin{equation}\label{proof4.2}
	\begin{split}
	|\nabla^i v_k(x)|&\leq CR^{-i}\Lambda + C\mu_k^i\int_{B_{2R\mu_k}(x_k)}\dfrac{2a(y)f(u_k(y))+d}{|x_k+\mu_kx-y|^i}dy\\
	&\leq CR^{-i}\Lambda +C(2\|a\|_\infty+d\mu_k^4)\mu_k^{i-4}\int_{B_{2R\mu_k}(x_k)}\dfrac{1}{|x_k+\mu_kx-y|^i}dy.
	\end{split} 
	\end{equation}
	Using the change of variable $y=\mu_kz+x_k$, the last integral becomes
	$$\int_{B_{2R\mu_k}(x_k)}\dfrac1{|x_k+\mu_kx-y|^i}dy=\int_{B_{2R}(0)}\dfrac{1}{\mu_k^i|x-z|^i}\mu_k^4dz=\mu_k^{4-i}\int_{B_{2R}(0)}\dfrac{1}{|z-x|^i}dz.$$
	Inserting it into \eqref{proof4.2}, we obtain
	\begin{equation*}
	|\nabla^iv_k(x)|\leq CR^{-i}\Lambda+(2\|a\|_\infty+d\mu_k^4)C\int_0^{2R}\rho^{3-i}d\rho,
	\end{equation*}
	which is finite for $i\in\{1,2,3\}$ since $\mu_k\to0$ as $k\to+\infty$. Taking finally $i=0$ and $x\in B_R(0)$, we have
	$$|v_k(x)|=|v_k(x)-v_k(0)|\leq \sup_{B_R(0)}|\nabla v_k||x|\leq C(R).$$
\end{proof}
As we are able to control both $\Delta^2v_k$ and, locally in $\R^4$, $\nabla^iv_k$, then the local boundedness of the sequence $(v_k)_{k\in\N}$ is achieved by the following Lemma.
\begin{lem}(\cite{RW}, Corollary 6)\label{RWthm}
	Let $\Omega=B_R(0)\subset\R^N$, $m\in\N$, $h\in\Lp(\Omega)$ for some $p\in(1,+\infty)$ and suppose $u\in W^{2m,p}(\Omega)$ satisfies
	$$(-\Delta)^m u=h\,\,\mbox{ in }\,\Omega.$$
	Then there exists a constant $C=C(R,N,p,m)$, such that for any $\delta\in(0,1)$,
	\begin{equation*}
	\|u\|_{W^{2m,p}(B_{\delta R}(0))}\leq\dfrac{C}{(1-\delta)^{2m}}(\|h\|_{\Lp(B_R(0))}+\|u\|_{\Lp(B_R(0))}).
	\end{equation*}
\end{lem}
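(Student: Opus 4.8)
The statement is a standard interior $L^p$ (Calder\'on--Zygmund) estimate for the polyharmonic operator, with explicit control of the constant in terms of the ratio of the radii; here is how I would prove it. By the rescaling $u(x)\mapsto u(Rx)$ one reduces to $R=1$, and it suffices to treat $\delta\in[\tfrac12,1)$: for $\delta<\tfrac12$ one has $\|u\|_{W^{2m,p}(B_{\delta R})}\le\|u\|_{W^{2m,p}(B_{R/2})}$ and $(1-\delta)^{-2m}\ge 1$, so the case $\delta=\tfrac12$ already gives the claim. The two ingredients I would invoke are: (a) the whole-space estimate $\|D^{2m}w\|_{L^p(\R^4)}\le C(N,m,p)\|(-\Delta)^mw\|_{L^p(\R^4)}$ for compactly supported $w\in W^{2m,p}$, which follows because the $2m$-th order derivatives of the fundamental solution of $(-\Delta)^m$ are Calder\'on--Zygmund kernels (equivalently, $\xi^\alpha/|\xi|^{2m}$ is an H\"ormander--Mikhlin multiplier for $|\alpha|=2m$); and (b) the scaled interpolation inequality: for $0<\rho\le1$, $0\le j\le 2m$, and every $\sigma\in(0,\rho^{2m-j}]$,
$$\|D^ju\|_{L^p(B_\rho)}\le\sigma\|D^{2m}u\|_{L^p(B_\rho)}+C(N,m,p,j)\,\sigma^{-j/(2m-j)}\|u\|_{L^p(B_\rho)}.$$

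Fix $\tfrac12\le s<t\le1$ and a cut-off $\eta$ with $\eta\equiv1$ on $B_s$, $\operatorname{supp}\eta\subset B_{(s+t)/2}$ and $|D^i\eta|\le C(t-s)^{-i}$. Then $\eta u\in W^{2m,p}(\R^4)$ is compactly supported and, by the Leibniz rule applied to $\Delta^m(\eta u)$, one has pointwise $|(-\Delta)^m(\eta u)|\le|h|+C\sum_{j=0}^{2m-1}(t-s)^{j-2m}|D^ju|$, with the right-hand side supported in $B_{(s+t)/2}$. Applying (a), and then for each $j$ the interpolation (b) on $B_t\supset B_{(s+t)/2}$ with the choice $\sigma_j:=\theta(t-s)^{2m-j}$ (admissible since $(t-s)^{2m-j}\le t^{2m-j}$), the remainder powers of $(t-s)$ collapse to exactly $(t-s)^{-2m}$, uniformly in $j$, and one obtains, for every $\theta\in(0,1]$,
$$\|D^{2m}u\|_{L^p(B_s)}\le C\|h\|_{L^p(B_1)}+2m\,\theta\,\|D^{2m}u\|_{L^p(B_t)}+\frac{C(\theta)}{(t-s)^{2m}}\|u\|_{L^p(B_1)}.$$
Choosing $\theta=\tfrac1{4m}$ so that $2m\theta=\tfrac12$, and writing $\phi(\rho):=\|D^{2m}u\|_{L^p(B_\rho)}$, this reads
$$\phi(s)\le\tfrac12\,\phi(t)+\frac{C}{(t-s)^{2m}}\big(\|h\|_{L^p(B_1)}+\|u\|_{L^p(B_1)}\big)\qquad\text{for all }\tfrac12\le s<t\le1.$$

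Finally, the standard iteration lemma for such inequalities (if $\phi(s)\le\vartheta\,\phi(t)+A(t-s)^{-\beta}+B$ with $\vartheta<1$ and $\phi$ bounded, then $\phi(\delta)\le C(\vartheta,\beta)\,[A(1-\delta)^{-\beta}+B]$) yields $\|D^{2m}u\|_{L^p(B_\delta)}\le C(1-\delta)^{-2m}\big(\|h\|_{L^p(B_1)}+\|u\|_{L^p(B_1)}\big)$; reinstating $R$ and bounding the intermediate derivatives $\|D^ju\|_{L^p(B_{\delta R})}$, $0\le j\le 2m-1$, by (b) (using $\delta R\ge R/2$ and $(1-\delta)^{-2m}\ge1$) gives the full $W^{2m,p}(B_{\delta R})$ bound. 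I expect the genuinely delicate point to be the bookkeeping in this absorption step: the cut-off derivatives must be distributed and the interpolation parameters $\sigma_j$ tuned so that the coefficient of $\phi(t)$ stays strictly below $1$ while the remainder carries precisely the weight $(t-s)^{-2m}$ — any imprecision there produces a worse, or $\delta$-dependent, constant and destroys the sharp $(1-\delta)^{-2m}$ blow-up.
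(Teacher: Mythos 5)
Your proof is correct, but note that the paper does not prove this lemma at all: it is quoted verbatim from Reichel--Weth \cite[Corollary 6]{RW}, so there is no in-paper argument to compare against. What you have written is a sound, self-contained derivation along the classical Calder\'on--Zygmund route: the whole-space estimate for $(-\Delta)^m$ via H\"ormander--Mikhlin, a cut-off with the Leibniz expansion of $\Delta^m(\eta u)$, the scale-invariant interpolation inequality with the parameters $\sigma_j=\theta(t-s)^{2m-j}$ tuned so that the remainder carries exactly the weight $(t-s)^{-2m}$, and Giaquinta's iteration lemma to remove the absorption coefficient. I checked the bookkeeping you flag as delicate: the choice $\sigma_j=\theta(t-s)^{2m-j}$ is admissible since $t-s\le t$, it turns each commutator term into $\theta\|D^{2m}u\|_{L^p(B_t)}+C(\theta)(t-s)^{-2m}\|u\|_{L^p(B_1)}$, and $\theta=\tfrac1{4m}$ keeps the total coefficient at $\tfrac12$; the iteration lemma applies because $\phi$ is finite on $[\tfrac12,1]$ by the hypothesis $u\in W^{2m,p}(B_R)$, and the reduction to $\delta\ge\tfrac12$ together with the final interpolation for the intermediate derivatives closes the argument. (Two cosmetic slips: you write $\R^4$ in ingredient (a) where it should be $\R^N$, and the constant $C\|h\|$ should be absorbed into the $(t-s)^{-2m}$ remainder, as you implicitly do.) A slightly softer alternative, closer in spirit to how such estimates are assembled for polyharmonic operators, is to decouple $(-\Delta)^mu=h$ into $m$ second-order equations and apply the standard second-order interior $W^{2,p}$ estimate $m$ times on a nested family of balls with radii interpolating between $\delta R$ and $R$; each step costs a factor $(1-\delta)^{-2}$, which reproduces the stated $(1-\delta)^{-2m}$ without re-deriving the higher-order whole-space estimate. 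Either way, the lemma is standard and your proof is acceptable.
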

Applied in our context, using \eqref{Delta2vk1} and Lemma \ref{grad_limitato}, this implies the boundedness of $(v_k)_{k\in\N}$ in $W^{4,p}_{loc}(\R^4)$ so, by compact embedding, there exists $v\in C^3(\R^4)$ such that, {for a subsequence still denoted by $(v_k)_k$,} we have $v_k\to v$ in $C^{3,\gamma}_{loc}(\R^4)$ for any $\gamma\in(0,1)$, satisfying $v\leq0$ and $v(0)=0$. Looking for the equation satisfied by $v$ in $\R^4$, one may rewrite \eqref{Delta2vk1} and obtain
\begin{equation}\label{Delta2vk10}
\begin{split}
\Delta^2v_k(x)&\leq(1+\varepsilon)a(x_k+\mu_kx)e^{\log(f(u_k(x_k+\mu_kx)))-\log(f(M_k))}+\tfrac{d_\varepsilon}{M_k}.
\end{split}
\end{equation}
Taking the first-order Taylor expansion of $\log\circ f$ around $M_k$, one finds
\begin{equation*}
\begin{split}
&\log(f(u_k(x_k+\mu_kx)))=\log(f(M_k))+\dfrac{f'(z_k(x))}{f(z_k(x))}(u_k(x_k+\mu_kx)-M_k),
\end{split}
\end{equation*}
where $z_k(x):=M_k+\theta_k(x)(u_k(x_k+\mu_kx)-M_k)=M_k+\theta_k(x) v_k(x)$, $\theta_k(x)\in(0,1)$. Hence, \eqref{Delta2vk10} becomes 
\begin{equation}\label{Delta2vk100}
\Delta^2v_k(x)\leq(1+\varepsilon)a(x_k+\mu_kx)e^{\frac{f'(z_k(x))}{f(z_k(x))}v_k(x)}+\tfrac{d_\varepsilon}{M_k}.
\end{equation}
Analogously, the following lower bound holds:
\begin{equation}\label{Delta2vk11}
\begin{split}
\Delta^2v_k(x)\geq(1-\varepsilon)a(x_k+\mu_kx)e^{\frac{f'(z_k(x))}{f(z_k(x))}v_k(x)}-\tfrac{d_\varepsilon}{M_k}.
\end{split}
\end{equation}
Since $v_k\to v$ uniformly on compact sets and $M_k\to+\infty$, then $z_k(x)\to+\infty$ uniformly on compact sets and, according to assumption (\textit{A3}), we have
\[
\lim_{k\to \infty}  \frac{f'(z_k(x))}{f(z_k(x))} = \beta:= \lim_{t\to+\infty}{\frac{f'(t)}{f(t)}}\in [0,+\infty).
\]
Using \eqref{Delta2vk100}-\eqref{Delta2vk11} and the arbitrariness of $\varepsilon>0$, we find
\begin{equation*}\tag{*}\label{*}
\Delta^2v=a(x_\infty)e^{\beta v}\qquad\mbox{ in }\;\R^4.
\end{equation*}
Notice that $a(x_\infty)\neq0$ since $x_\infty\in\Omega$ and here $a>0$.  We now treat separately the subcritical case ($\beta=0$) and the critical case ($\beta>0$).

\subsection{The subcritical case ($\beta=0$)}\label{SectionSubcritical}
Here the limit profile $v$ satisfies
\begin{equation*}
\Delta^2v=a(x_\infty)\qquad\mbox{ in }\;\R^4,
\end{equation*}
with $a(x_\infty)>0$. Using a Taylor expansion and \eqref{asymptotic_GROWTH} with $\varepsilon=\frac12$, we have
\begin{equation*}\label{assurdo1}
\begin{split}
+\infty=\int_{\R^4} a(x_\infty)&=\int_{\R^4}\lim_{k\to+\infty}a(x_k+\mu_kx)e^{\frac{f'(z_k(x))}{f(z_k(x))}v_k(x)}\chi_{\Omega_k}(x)dx\\
&=\int_{\R^4}\lim_{k\to+\infty}a(x_k+\mu_kx)e^{\log(f(u_k(x_k+\mu_kx)))-\log(f(M_k))}\chi_{\Omega_k}(x)dx\\
&\leq 2\liminf_{k\to+\infty}\int_{\Omega_k}\dfrac{[\frac12a(x_k+\mu_kx)f(u_k(x_k+\mu_kx))-d]+d}{f(M_k)}dx\\
&\leq 2\liminf_{k\to+\infty}\bigg[\int_{\Omega_k}\dfrac{h(x_k+\mu_kx,u_k(x_k+\mu_kx))}{f(M_k)}dx+d\int_{\Omega_k}\dfrac{dx}{f(M_k)}\bigg]\\
&=2\liminf_{k\to+\infty}\bigg[\intOmega h(y,u_k(y))dy+d|\Omega|\bigg]\leq2[\Lambda+d|\Omega|],
\end{split}
\end{equation*}
where the last inequality is due to Proposition \ref{NEWSection1}. This contradiction proves Theorem \ref{uniformboundblowupBALL} in the case of a subcritical nonlinearity.

\subsection{The critical case ($\beta>0$)}\label{SectionQcritical}
 
First, with a similar argument as in Section \ref{SectionSubcritical}, we find that $v$ has finite energy, as a first step in order to characterize it.

\begin{lem}\label{integrability}
	$\int_{\R^4}e^{\beta v}<+\infty$.
\end{lem}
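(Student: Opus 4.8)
The plan is to turn the uniform energy bound $\int_\Omega h(x,u_k)\,dx\le\Lambda$ from Proposition~\ref{NEWSection1} into a uniform $L^1$-bound for $\Delta^2 v_k$ on the expanding domains $\Omega_k$, and then to let $k\to\infty$ via Fatou's lemma. First I would note that, by \eqref{Delta2vk1}, $\Delta^2 v_k(x)=\dfrac{h(x_k+\mu_k x,\,u_k(x_k+\mu_k x))}{f(M_k)}\ge 0$ on $\Omega_k$, since $h\ge 0$. Hence the change of variable $y=x_k+\mu_k x$ (so $dx=\mu_k^{-4}\,dy=f(M_k)\,dy$ by \eqref{muk}) gives, for every $k$,
\[
\int_{\Omega_k}\Delta^2 v_k(x)\,dx=\frac{1}{f(M_k)}\int_{\Omega_k} h\bigl(x_k+\mu_k x,\,u_k(x_k+\mu_k x)\bigr)\,dx=\int_\Omega h(y,u_k(y))\,dy\le\Lambda,
\]
the last inequality being exactly Proposition~\ref{NEWSection1}. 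In particular, fixing $R>0$ and taking $k$ large enough that $B_R(0)\subset\Omega_k$ (possible since $\Omega_k\nearrow\R^4$), the nonnegativity of $\Delta^2 v_k$ yields $\int_{B_R(0)}\Delta^2 v_k(x)\,dx\le\Lambda$.

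Then I would pass to the limit. By the discussion preceding the statement — combining \eqref{Delta2vk100}--\eqref{Delta2vk11} with the continuity of $a$ at $x_\infty$, the local uniform convergence $v_k\to v$, and $\frac{f'(z_k(x))}{f(z_k(x))}\to\beta$ from assumption (\textit{A3}) — one has $\Delta^2 v_k(x)\to a(x_\infty)e^{\beta v(x)}$ for every $x\in\R^4$, which is precisely equation \eqref{*}. Therefore Fatou's lemma gives
\[
\int_{B_R(0)} a(x_\infty)e^{\beta v}\,dx=\int_{B_R(0)}\liminf_{k\to\infty}\Delta^2 v_k(x)\,dx\le\liminf_{k\to\infty}\int_{B_R(0)}\Delta^2 v_k(x)\,dx\le\Lambda .
\]
Letting $R\to\infty$ (monotone convergence) and using that $a(x_\infty)>0$, which holds because $x_\infty\in\Omega$ and $a>0$ there, we conclude $\int_{\R^4}e^{\beta v}\,dx\le\Lambda/a(x_\infty)<+\infty$.

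The argument is essentially soft once \eqref{*} is available: the only point requiring care is the rescaling bookkeeping, namely that the factor $1/f(M_k)$ in $\Delta^2 v_k$ cancels exactly against the Jacobian $\mu_k^4=1/f(M_k)$, so that no energy is lost when integrating over $\Omega_k$ instead of $\Omega$. I do not expect any further obstruction, since the nonnegativity of $\Delta^2 v_k$ makes Fatou's lemma directly applicable and the positivity of the limit constant $a(x_\infty)$ is guaranteed by $x_\infty$ lying strictly inside the domain, again a consequence of Proposition~\ref{NEWSection1}.
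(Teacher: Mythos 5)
Your proof is correct and follows essentially the same route as the paper: Fatou's lemma applied to the rescaled equation, with the factor $1/f(M_k)$ cancelling the Jacobian $\mu_k^4$ so that the uniform energy bound of Proposition~\ref{NEWSection1} passes to the limit. The only (cosmetic) difference is that you integrate $\Delta^2 v_k=h(\cdot,u_k(\cdot))/f(M_k)$ directly, whereas the paper integrates $a(x_k+\mu_k x)e^{\frac{f'(z_k)}{f(z_k)}v_k}=a\,f(u_k)/f(M_k)$ and then invokes \eqref{asymptotic_GROWTH} to compare with $h$, which costs an extra factor $2[\Lambda+d|\Omega|]$ that your version avoids.
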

\begin{proof}
	\begin{equation*}
	\begin{split}
	a(x_\infty)\int_{\R^4}e^{\beta v}&=\int_{\R^4}\lim_{k\to+\infty}a(x_k+\mu_kx)e^{\frac{f'(z_k(x))}{f(z_k(x))}v_k(x)}\chi_{\Omega_k}(x)dx\\
	&\leq2\liminf_{k\rightarrow+\infty}\bigg[\intOmega h(y,u_k(y))dy+d|\Omega|\bigg]\leq2[\Lambda+d|\Omega|].
	\end{split}
	\end{equation*}
\end{proof}
	
\begin{lem}[\cite{MP}, Lemma 4]\label{MPlemma}
	For all $i=1,2,3$ and $p\in[1,\frac4i)$, there exists a constant $C(i,p)>0$ such that $\|\nabla^iu_k\|_{L^p(B_r(x_0))}^p\leq Cr^{4-ip}$ for any $B_r(x_0)\subset\Omega$.
\end{lem}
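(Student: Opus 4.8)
The plan is to combine the representation formula for $u_k$, the pointwise derivative bounds $|\nabla^i_xG_\Omega(x,y)|\leq C|x-y|^{-i}$ from Lemma~\ref{Greenspropetry}, and the uniform energy control $\intOmega h(y,u_k(y))\,dy\leq\Lambda$ established in Proposition~\ref{NEWSection1}. Write $g_k(y):=h(y,u_k(y))\geq 0$, so that $\|g_k\|_{L^1(\Omega)}\leq\Lambda$ for all $k$. Fixing $i\in\{1,2,3\}$ and differentiating the representation formula $u_k(x)=\intOmega G_\Omega(x,y)g_k(y)\,dy$ under the integral sign (legitimate away from the diagonal), one obtains the pointwise bound
\[
|\nabla^i u_k(x)|\leq C\intOmega\frac{g_k(y)}{|x-y|^i}\,dy\qquad\text{for }x\in\Omega.
\]

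Next I would take the $L^p$ norm in $x$ over $B_r(x_0)$ and apply Minkowski's integral inequality (here $p\geq 1$ is used):
\[
\|\nabla^i u_k\|_{L^p(B_r(x_0))}\leq C\intOmega g_k(y)\,\Big(\int_{B_r(x_0)}\frac{dx}{|x-y|^{ip}}\Big)^{1/p}dy.
\]
The crucial elementary estimate is that, uniformly in $y\in\R^4$,
\[
\int_{B_r(x_0)}\frac{dx}{|x-y|^{ip}}\leq C\,r^{4-ip},
\]
which holds precisely because $p<\tfrac4i$ forces $4-ip>0$: if $y\in B_{2r}(x_0)$ then $B_r(x_0)\subset B_{3r}(y)$ and integrating in polar coordinates gives $c\int_0^{3r}\rho^{3-ip}\,d\rho=c'\,r^{4-ip}$; if $y\notin B_{2r}(x_0)$ then $|x-y|\geq r$ on $B_r(x_0)$, so the integral is at most $r^{-ip}|B_r(x_0)|\leq c\,r^{4-ip}$.

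Plugging this in and using $\|g_k\|_{L^1(\Omega)}\leq\Lambda$ yields
\[
\|\nabla^i u_k\|_{L^p(B_r(x_0))}\leq C\,r^{(4-ip)/p}\,\|g_k\|_{L^1(\Omega)}\leq C\Lambda\,r^{(4-ip)/p},
\]
and raising to the power $p$ gives exactly $\|\nabla^i u_k\|_{L^p(B_r(x_0))}^p\leq C(i,p)\,r^{4-ip}$, with the constant absorbing $\Lambda$. There is no genuine obstacle here: the entire content is that a uniform $L^1$ bound on the right-hand side, convolved against the Riesz-type kernel $|x-y|^{-i}$ (which is locally $L^q$ exactly for $q<\tfrac4i$), automatically produces the stated scaling in $r$. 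The only point requiring a word of care is the differentiation under the integral sign in the representation formula, which is standard for the biharmonic Green function on a smooth bounded domain away from its diagonal.
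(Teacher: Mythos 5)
Your argument is correct and is essentially the paper's own proof: both start from the Green representation together with the derivative bounds of Lemma \ref{Greenspropetry} and the uniform $L^1$ control $\|h(\cdot,u_k)\|_{L^1(\Omega)}\leq\Lambda$ from Proposition \ref{NEWSection1}, and then reduce to the elementary estimate $\int_{B_r(x_0)}|x-y|^{-ip}\,dx\leq Cr^{4-ip}$, valid precisely because $p<\tfrac4i$. The only cosmetic difference is that you invoke Minkowski's integral inequality directly, whereas the paper implements the same step by testing against $\varphi\in C^\infty_c(B_r(x_0))$ and concluding by duality.
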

\begin{proof}
	\noindent By the Green representation formula and Lemma \ref{Greenspropetry} we have
	$$|\nabla^iu_k(x)|\leq\int_\Omega |\nabla_x^iG_\Omega(x,y)|h(y,u_k(y))dy\leq C\int_\Omega\dfrac{1}{|x-y|^i}h(y,u_k(y))dy.$$
	Thus, for any $\varphi\in C^{\infty}_c(B_r(x_0))$ and $p'$ being the conjugate exponent of $p$, we have
	\begin{equation*}
	\begin{split}
	\int_{B_r(x_0)}|\nabla^iu_k(x)|\varphi(x)dx
	&\leq C\int_\Omega h(y,u_k(y))\||x-y|^{-i}|\|_{\Lp(B_r(x_0))}\|\varphi\|_{L^{p'}(B_r(x_0))}dy\\
	&\leq\Lambda r^{4-ip}\|\varphi\|_{L^{p'}(B_r(x_0))},
	\end{split}
	\end{equation*}
	using Proposition \ref{NEWSection1} and the boundedness of $\Omega$. By duality, this yields our claim.
\end{proof}	

\begin{lem}
	$v(x)=o(|x|^2)$ as $|x|\rightarrow+\infty.$
\end{lem}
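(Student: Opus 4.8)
The goal is to show that the limit profile $v$, which satisfies $\Delta^2 v = a(x_\infty)e^{\beta v}$ in $\R^4$ with $v\le 0$, $v(0)=0$ and $\int_{\R^4}e^{\beta v}<+\infty$ (Lemma \ref{integrability}), grows sublinearly in the quadratic scale: $v(x)=o(|x|^2)$. The natural strategy is to write $v$ via its own Green-type representation and estimate the resulting potential. Concretely, set $w(x):=\frac{a(x_\infty)}{8\pi^2}\int_{\R^4}\log\frac{1}{|x-y|}\,e^{\beta v(y)}\,dy$ (up to normalization constants, this is the Newtonian-type potential in $\R^4$ associated with $\Delta^2$, recalling that the fundamental solution of $\Delta^2$ in dimension $4$ is a multiple of $\log\frac1{|x|}$). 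Then $\Delta^2 w = a(x_\infty)e^{\beta v}$ in the distributional sense, so $\Delta^2(v-w)=0$ in $\R^4$; hence $v-w$ is a biharmonic (in particular polyharmonic, real-analytic) function on all of $\R^4$.

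The first key step is to show $w(x)=o(|x|^2)$ as $|x|\to\infty$. This uses the finiteness of the mass $\mu:=\int_{\R^4}e^{\beta v}<+\infty$. Split the integral defining $w(x)$ into the regions $|x-y|\le 1$, $1<|x-y|\le |x|/2$, and $|x-y|>|x|/2$. On the far region $\log\frac1{|x-y|}$ is controlled by $\log\frac{2}{|x|}$, which is $o(|x|^2)$ times the finite mass; the intermediate region contributes at most $(\log|x|)\cdot\mu=o(|x|^2)$; and the singular near region contributes a bounded amount because $\log\frac1{|x-y|}\in L^1_{loc}$ and $e^{\beta v}\in L^\infty$ (indeed $v\le 0$ so $e^{\beta v}\le 1$). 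One must be slightly careful about the sign of $\log\frac1{|x-y|}$ and whether $w$ is actually finite and well-defined; since $e^{\beta v}$ is bounded and integrable, both the positive and negative parts of the integrand are integrable for each fixed $x$, so $w$ is finite everywhere, and a dominated-convergence / uniform-tail argument gives the claimed $o(|x|^2)$ decay rate.

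The second key step is to control the entire part $P:=v-w$, which is biharmonic on $\R^4$. From Step~1 and the hypothesis $v(x)=o(|x|^2)$ is exactly what we want to prove — so we cannot assume it; instead we know $v\le 0$, hence $v-w\le -w$, and from Step~1 combined with a lower bound $w(x)\ge -C-C\log(2+|x|)$ (again from $\mu<\infty$ and $e^{\beta v}\le 1$) we get $v-w\le -w\le C+C\log(2+|x|)$, i.e. $P$ is a biharmonic function bounded above by $C(1+\log(2+|x|))=o(|x|^2)$. A Liouville-type theorem for biharmonic functions (a polyharmonic function on $\R^N$ bounded above — or with subquadratic growth of a one-sided kind — by $o(|x|^2)$ must be an affine function; this follows from the mean-value property for $\Delta$ applied to $\Delta P$, which is harmonic and bounded above by a sublinear function, hence constant, then $P$ itself has the mean value property up to a harmonic correction and polynomial growth forces $P$ affine) then yields that $P$ is affine, $P(x)=\alpha\cdot x+c$. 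The bound $P\le o(|x|^2)$ is automatically satisfied by affine functions, so this alone does not kill the linear term; however, combining $v=w+P=o(|x|^2)+\alpha\cdot x+c$ with the a-priori bound from Lemma \ref{MPlemma} — which controls $\|\nabla v_k\|_{L^p}$ on balls and passes to the limit to give $\int_{B_r}|\nabla v|^p\preceq r^{4-p}$, forcing the average of $|\nabla v|$ over large balls to vanish — rules out any nonzero linear term $\alpha$. Therefore $v=w+c=o(|x|^2)$, as claimed.

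**Main obstacle.** The delicate point is the Liouville step together with eliminating the linear term: one has to combine the one-sided bound $v\le 0$ with the gradient estimates of Lemma \ref{MPlemma} correctly, since a priori a biharmonic function like $\alpha\cdot x$ is perfectly compatible with the logarithmic potential bound. Getting the representation $v=w+(\text{affine})$ rigorously also requires justifying that the convolution potential $w$ is genuinely a distributional solution of $\Delta^2 w = a(x_\infty)e^{\beta v}$ with controlled growth, which is where the integrability $\int e^{\beta v}<\infty$ (only just proved) is essential. Everything else — the splitting of the integral, the local $L^1$ bound on $\log\frac1{|\cdot|}$, the passage to the limit in Lemma \ref{MPlemma} — is routine.
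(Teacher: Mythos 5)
There is a genuine gap in your Liouville step, and it sits exactly at the point this lemma is designed to address. The statement you invoke, ``a biharmonic function on $\R^N$ bounded above by $o(|x|^2)$ must be affine'', is false: $P(x)=-|x|^2$ is biharmonic, bounded above by $0$, and not affine. What the one-sided bound $P\le C(1+\log(2+|x|))$ actually yields is the following: by Pizzetti's formula for biharmonic functions, $\frac{1}{|\partial B_r(x)|}\int_{\partial B_r(x)}P\,d\sigma=P(x)+\frac{r^2}{2N}\Delta P(x)$, so letting $r\to\infty$ gives $\Delta P\le 0$ everywhere; then $\Delta P$ is a harmonic function bounded above, hence $\Delta P\equiv-c$ for some constant $c\ge0$, and $P$ is a polynomial of degree at most $2$ of the form $Q-\frac{c}{2N}|x|^2$ with $Q$ a harmonic polynomial. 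Your sketch (``$\Delta P$ \dots bounded above by a sublinear function, hence constant, \dots polynomial growth forces $P$ affine'') silently discards the case $c>0$. This is not a technicality: by Lin's classification, the limit equation $\Delta^2 v=a(x_\infty)e^{\beta v}$ in $\R^4$ with $\int_{\R^4}e^{\beta v}<\infty$ genuinely admits solutions with $-\Delta v\to c>0$ at infinity, i.e.\ with $v\sim-\frac{c}{8}|x|^2$. Excluding these is the entire content of the lemma, and it cannot follow from the PDE, $v\le0$ and the finite mass alone; one must inject additional information coming from the blow-up sequence.

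The paper does precisely this: it quotes Lin's dichotomy (either $v=o(|x|^2)$, or $-\Delta v\ge b>0$ everywhere) and kills the second alternative with the estimate $\int_{B_R(0)}|\Delta v_k|\le CR^2$, obtained by rescaling Lemma \ref{MPlemma} with $i=2$, $p=1$, together with Fatou's lemma. Your architecture (log-potential decomposition $v=w+P$, Liouville, then Lemma \ref{MPlemma}) is in principle repairable along the same lines: once you correctly conclude that $P$ is a quadratic polynomial with $\Delta P\equiv-c\le0$, the rescaled $i=1$, $p=1$ case of Lemma \ref{MPlemma} gives $R^{-4}\int_{B_R(0)}|\nabla v|\le CR^{-1}\to0$, whereas $R^{-4}\int_{B_R(0)}|\nabla w|\le CR^{-1}\to 0$ as well; any nonconstant polynomial part of $P$ would force $R^{-4}\int_{B_R(0)}|\nabla P|$ to stay bounded away from $0$ (linear part) or to grow like $R$ (quadratic part), a contradiction that removes both. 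As written, however, you apply Lemma \ref{MPlemma} only to eliminate the linear term $\alpha\cdot x$, having already disposed of the quadratic term by an incorrect Liouville theorem, so the proof does not close.
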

\begin{proof}
	Firstly, by \eqref{vk} and Lemma \ref{MPlemma} with $i=2$ and $p=1$, there holds
	\begin{equation}\label{MPstima}
	\begin{split}
	\int_{B_R(0)}|\Delta v_k|&=\mu_k^2\int_{B_R(0)}|\Delta u_k(x_k+\mu_kx)|dx=\mu_k^{-2}\int_{B_{R\mu_k}(x_k)}|\Delta u_k|\\
	&\leq\mu_k^{-2}C(R\mu_k)^{4-2}=CR^2.
	\end{split}
	\end{equation}
	\noindent Suppose now by contradiction that $v(x)=o(|x|^2)$ as $|x|\rightarrow+\infty$ does not hold. By Lin \cite{Lin} we would infer that there exists $b>0$ such that $-\Delta v(x)\geq b$ for every $x\in\R^4$. This, combined with \eqref{MPstima} and Fatou's Lemma, would imply
	\begin{equation*}
	CbR^4\leq\int_{B_{R}(0)}|\Delta v|\leq\liminf_{k\rightarrow+\infty}\int_{B_{R}(0)}|\Delta v_k|\leq CR^2,
	\end{equation*}
	which contradicts the arbitrariness of $R>0$. This proves our claim.	
\end{proof}

\noindent We can now characterize the limit profile $v$ and the energy concentrating at $x_\infty$ by means of the classification result by Lin \cite[Theorem 1.1]{Lin}.
\begin{lem}\label{charact} We have
\begin{equation}\label{exprv}
v(x)=-\frac4\beta\log\left(1+\left(\frac{a(x_\infty)\beta}{24}\right)^\frac12\frac{|x|^2}4\right).
\end{equation}
	Moreover, there holds
	\begin{equation}\label{stimaintegrale}
	\lim_{R\rightarrow+\infty}\liminf_{k\rightarrow+\infty}\int_{B_{R\mu_k}(x_k)}h(y,u_k(y))dy\geq \theta>0,
	\end{equation}
	with $\theta\beta=64\pi^2$.
\end{lem}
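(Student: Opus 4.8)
The plan is to bring the limit equation \eqref{*} into the scale-invariant normal form treated by Lin and then read off both the profile and the concentrated mass. Write $a_\infty:=a(x_\infty)>0$ and set
\[
w:=\frac{\beta}{4}\,v+\frac14\log\frac{\beta a_\infty}{24},
\]
so that $e^{4w}=\tfrac{\beta a_\infty}{24}e^{\beta v}$ and, by \eqref{*}, $\Delta^2w=\tfrac{\beta}{4}\Delta^2v=\tfrac{\beta a_\infty}{4}e^{\beta v}=6\,e^{4w}$ in $\R^4$. Lemma \ref{integrability} gives $\int_{\R^4}e^{4w}=\tfrac{\beta a_\infty}{24}\int_{\R^4}e^{\beta v}<+\infty$, while the preceding lemma gives $w(x)=\tfrac{\beta}{4}v(x)+\mathrm{const}=o(|x|^2)$ as $|x|\to+\infty$; hence $w$ falls within the scope of the classification in \cite[Theorem 1.1]{Lin}.

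By that result $w$ must be a standard bubble: $w(x)=\log\dfrac{2\lambda}{1+\lambda^2|x-x_0|^2}$ for some $\lambda>0$ and $x_0\in\R^4$. Passing the relations $v_k(0)=0$, $v_k\le0$ to the limit shows that $0$ is a maximum point of $v$, hence of $w$, so $\nabla w(0)=0$ and therefore $x_0=0$ (a bubble has its center as unique critical point). Evaluating at $x=0$ gives $\tfrac14\log\tfrac{\beta a_\infty}{24}=w(0)=\log(2\lambda)$, i.e. $\lambda^2=\tfrac14\bigl(\tfrac{\beta a_\infty}{24}\bigr)^{1/2}$. Inverting the definition of $w$, $v=\tfrac{4}{\beta}\bigl(w-\tfrac14\log\tfrac{\beta a_\infty}{24}\bigr)=-\tfrac{4}{\beta}\log(1+\lambda^2|x|^2)$, which is exactly \eqref{exprv}.

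For the mass, \eqref{exprv} gives $e^{\beta v}=(1+\lambda^2|x|^2)^{-4}$, so by the scaling $y\mapsto\lambda y$ and $\int_{\R^4}(1+|y|^2)^{-4}\,dy=\tfrac{\pi^2}{6}$,
\[
\int_{\R^4}a_\infty e^{\beta v}=a_\infty\lambda^{-4}\,\frac{\pi^2}{6}=\frac{64\pi^2}{\beta}=:\theta,
\]
so $\theta\beta=64\pi^2$. To prove \eqref{stimaintegrale}, fix $R>0$ and change variables $y=x_k+\mu_kx$: by \eqref{vk}-\eqref{muk},
\[
\int_{B_{R\mu_k}(x_k)}h(y,u_k(y))\,dy=\int_{B_R(0)}\frac{h(x_k+\mu_kx,u_k(x_k+\mu_kx))}{f(M_k)}\,dx=\int_{B_R(0)}\Delta^2v_k\,dx.
\]
The computation that led to \eqref{*}, localized on $B_R(0)$, applies here: since $(v_k)$ is uniformly bounded on $B_R(0)$ (Lemma \ref{grad_limitato}) and $M_k\to+\infty$, the argument $z_k$ of $f'/f$ tends to $+\infty$ uniformly on $B_R(0)$, hence $\tfrac{f'(z_k)}{f(z_k)}\to\beta$ uniformly there; combining this with \eqref{Delta2vk100}-\eqref{Delta2vk11}, with $d_\varepsilon/M_k\to0$, with the continuity of $a$ at $x_\infty$, and letting $\varepsilon\to0$, yields $\Delta^2v_k\to a_\infty e^{\beta v}$ uniformly on $B_R(0)$. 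Therefore $\lim_{k\to\infty}\int_{B_{R\mu_k}(x_k)}h(y,u_k(y))\,dy=\int_{B_R(0)}a_\infty e^{\beta v}\,dx$, and letting $R\to+\infty$, by monotone convergence the right-hand side increases to $\int_{\R^4}a_\infty e^{\beta v}=\theta$; this gives \eqref{stimaintegrale} (in fact with equality in the limit).

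The only genuinely non-elementary ingredient is the appeal to Lin's classification: one must verify that the normalized profile $w$ solves precisely $\Delta^2w=6e^{4w}$ and that Lemma \ref{integrability} together with the subquadratic growth already established put us in the regime where \cite[Theorem 1.1]{Lin} forces the spherical profile. Everything else — fixing the center and scale of the bubble from $v(0)=0$ and $v\le0$, the explicit integration producing the constant $64\pi^2/\beta$, and the uniform-convergence passage to the limit in the rescaled integrals — is routine.
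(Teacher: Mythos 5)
Your proposal is correct and follows essentially the same route as the paper: reduce to $\Delta^2w=6e^{4w}$ (you normalize by an additive constant where the paper rescales the spatial variable, which is immaterial), invoke Lin's classification together with the finite energy and $o(|x|^2)$ growth, pin down the bubble's center and scale from $v\le 0=v(0)$, and obtain \eqref{stimaintegrale} by passing to the limit in the rescaled integrals via \eqref{Delta2vk100}--\eqref{Delta2vk11}. All the explicit constants ($\lambda^2=\tfrac14(\tfrac{a(x_\infty)\beta}{24})^{1/2}$ and $\theta=\tfrac{64\pi^2}{\beta}$) check out.
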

\begin{proof}
	Let us define $w= \frac{\beta}{4} v(\cdot T )$, where $T>0$ will be specified later. Then $w$ satisfies
	\begin{equation*}
		\Delta^2w=\frac{a(x_\infty)\beta T^4}4e^{4w}.
	\end{equation*}
	Choosing $T^4=\frac{24}{a(x_\infty)\beta}$, we have that $w$ is a finite-energy solution of $\Delta^2w=6e^{4w}$ such that $w(x)=o(|x|^2)$ as $|x|\to +\infty$.  By \cite[Theorem 1.1]{Lin}, there exist $\lambda>0$, $x_0\in \R^4$ such that $w = \eta_0(\lambda(\cdot-x_0))+\log \lambda$, where $\eta_0 = \log(\frac{2}{1+|x|^2})$. Since $w\le w(0)=0$, we get $x_0 =0$ and $\lambda = \frac{1}{2}$, so that $v$ is given by the expression in \eqref{exprv}.
	
	\noindent Let now $\varepsilon\in(0,1)$. Then, arguing as in Section \ref{SectionSubcritical} and using \eqref{asymptotic_GROWTH},
	\begin{equation*}
	\begin{split}
	0&<\theta:=a(x_\infty)\int_{\R^4}e^{\beta v}=\lim_{R\rightarrow+\infty}\int_{B_R(0)}\lim_{k\rightarrow+\infty}a(x_k+\mu_kx)e^{\big(\frac{f'(z_k(x))}{f(z_k(x))}\big)v_k(x)}dx\\
	&\leq\lim_{R\rightarrow+\infty}\liminf_{k\rightarrow+\infty}\int_{B_R(0)}a(x_k+\mu_kx)e^{\big(\frac{f'(z_k(x))}{f(z_k(x))}\big)v_k(x)}dx\\
	&=\frac1{1-\varepsilon}\lim_{R\rightarrow+\infty}\liminf_{k\rightarrow+\infty}\left(\int_{B_R(0)}\dfrac{(1-\varepsilon)a(x_k+\mu_kx)f(u_k(x_k+\mu_kx))-d_\varepsilon}{f(M_k)}dx+\dfrac{d_\varepsilon}{f(M_k)}|B_R(0)|\right)\\
	&\leq\frac1{1-\varepsilon}\lim_{R\rightarrow+\infty}\liminf_{k\rightarrow+\infty}\int_{B_{R\mu_k}(x_k)}h(y,u_k(y))dy.
	\end{split}
	\end{equation*}
  By arbitrariness of $\varepsilon$, this implies \eqref{stimaintegrale}. Finally, we observe that
  $$
  \theta=a(x_\infty)\int_{\R^4}e^{\beta v}dx=\frac{24}{\beta}\int_{\R^4}e^{4 w}dx = \frac{24}{\beta}\int_{\R^4}e^{4 \eta_0}dx =  \frac{24}{\beta}|\mathbb S^4| =\frac{64\pi^2}{\beta}.
  $$ 
\end{proof}

\begin{remark}
	The explicit value of the constant $\theta$ shows that the bound from below \eqref{stimaintegrale} of the concentration of the energy on blow-up points is indeed independent both on the potential $a(\cdot)$ and on the point $x_\infty$.
\end{remark}

So far, we have investigated the behaviour of each $u_k$ around a \textit{maximum} point $x_k$. This is indeed what happens for \textit{each} sequence of points $(y_k)_k$ such that $u_k(y_k)\nearrow+\infty$, as stated in the next Lemma. We do not include a proof, as this result can be obtained combining the argument in \cite[Claims 5-7]{RobWei} with our Lemmas \ref{grad_limitato}-\ref{charact} (see also \cite[Lemmas 7-8]{MP}):
\begin{lem}\label{integrability2}
	From the sequence $(u_k)_k$ one can extract a subsequence still denoted by $(u_k)_k$ for which the following holds.\\
	\noindent There are $P\in\N$ and converging sequences $\xki\to\XI$, $1\leq i\leq P$, with $u_k(\xki)\to+\infty$ such that, setting
	$$v_{k,i}(x):=u_k(\xki+\muki x)-u_k(\xki),\qquad\muki:=(f(u_k(\xki)))^{-1/4},$$
	we have
	\begin{enumerate}[(i)]
		\item $\frac{|\xki-x_{k,j}|}{\muki}\to+\infty$ as $k\to+\infty$ for $1\leq i\neq j\leq P$;
		\item $v_{k,i}\to v$ in $C^{3,\gamma}_{loc}(\R^4),$ for $1\leq i\leq P$, where $v$ is defined in Lemma \ref{charact} and estimate \eqref{stimaintegrale} holds;
		\item $\inf_{1\leq i\leq P}|x-\xki|^4h(x,u_k(x))\leq C$ for every $x\in\Omega$;
		\item $\inf_{1\leq i\leq P}|x-\xki|^j|\nabla^ju_k(x)|\leq C$ for every $x\in\Omega$ and $1\leq j\leq 3$.
	\end{enumerate}
\end{lem}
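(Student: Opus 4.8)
The plan is to iterate the blow-up analysis of Lemmas \ref{grad_limitato}--\ref{charact} over all ``bubbles''. I would start from the maximum point $x_{k,1}:=x_k$ already constructed, for which conclusions (ii) hold by the previous lemmas, and then run a greedy selection: having selected $x_{k,1},\dots,x_{k,i}$, I consider the weighted quantity
\[
\Psi_{k,i}(x):=\Big(\min_{1\le j\le i}|x-x_{k,j}|\Big)^4\,h(x,u_k(x)),\qquad x\in\Omega,
\]
and ask whether $\sup_\Omega\Psi_{k,i}$ stays bounded along the sequence. If it does for the current $i$, the process stops and $P:=i$; conclusions (iii)--(iv) then follow (the estimate on $\nabla^j u_k$ being deduced from (iii) via the Green representation formula and Lemma \ref{Greenspropetry}, exactly as in \cite{MP} and \cite{RobWei}). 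If instead $\sup_\Omega\Psi_{k,i}\to+\infty$, I pick points $x_{k,i+1}$ realizing (a large fraction of) this supremum; the defining property forces $|x_{k,i+1}-x_{k,j}|/\mu_{k,i+1}\to+\infty$ for $j\le i$, giving (i), and a rescaling centered at $x_{k,i+1}$ with scale $\mu_{k,i+1}=(f(u_k(x_{k,i+1})))^{-1/4}$ produces, by Lemmas \ref{grad_limitato}--\ref{charact} applied verbatim at the new center, another limit profile equal to $v$ and satisfying \eqref{stimaintegrale}.

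The key point that makes the iteration terminate is the energy quantization: by Lemma \ref{charact}, each bubble carries at least $\theta=64\pi^2/\beta>0$ units of the mass $\int_\Omega h(x,u_k)\,dx$, and conclusion (i) guarantees the rescaled balls $B_{R\mu_{k,i}}(x_{k,i})$ are eventually pairwise disjoint for fixed $R$; hence
\[
P\,\theta\le\liminf_{k\to\infty}\sum_{i=1}^{P}\int_{B_{R\mu_{k,i}}(x_{k,i})}h(y,u_k(y))\,dy\le\liminf_{k\to\infty}\int_\Omega h(y,u_k(y))\,dy\le\Lambda,
\]
using Proposition \ref{NEWSection1}. Thus $P\le\Lambda/\theta$ and the selection must stop after finitely many steps. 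Passing to a single subsequence along which all $P$ limits exist simultaneously is harmless since $P$ is bounded.

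The main obstacle is verifying the separation property (i) and, more delicately, that when $\sup_\Omega\Psi_{k,i}\to+\infty$ the new maximizing points $x_{k,i+1}$ indeed blow up, i.e.\ $u_k(x_{k,i+1})\to+\infty$: one has to exclude that $\Psi_{k,i}$ becomes large merely because $|x-x_{k,j}|$ stays bounded away from zero while $h(x,u_k(x))$ is itself bounded — this is ruled out by the interior regularity already available away from the bubbles (Lemma \ref{grad_limitato} type bounds propagate to any region where no previously selected center accumulates), so on such a region $\Psi_{k,i}$ is bounded, forcing the new maximum to sit on a genuinely new concentration point. Making this dichotomy precise, together with checking that the rescaled equation at $x_{k,i+1}$ still converges to $\Delta^2 v=a(x_\infty^{(i+1)})e^{\beta v}$ with the \emph{same} normalized profile $v$ of Lemma \ref{charact} (here one uses again that $a$ is continuous and bounded below, so $a(x_\infty^{(i+1)})>0$, and that the Taylor-expansion argument leading to \eqref{*} is insensitive to the center), is the only technical content; since all of this is carried out in \cite[Claims 5--7]{RobWei} and \cite[Lemmas 7--8]{MP} in essentially the present setting, we omit the details.
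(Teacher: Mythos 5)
Your proposal is correct and follows essentially the same route as the paper, which itself omits the proof and refers to the iterative bubble-extraction argument of \cite[Claims 5--7]{RobWei} and \cite[Lemmas 7--8]{MP}: greedy selection of concentration points via the weighted quantity $\big(\min_j|x-\xki[j]|\big)^4h(x,u_k(x))$, identification of each rescaled limit with the profile of Lemma \ref{charact}, and termination through the energy quantization $P\theta\leq\Lambda$. The only places where your sketch is looser than the cited argument are the two-sided separation in (i) (one must also rule out $|\xki-x_{k,j}|/\mu_{k,j}$ bounded for the \emph{old} scales, using the local convergence $v_{k,j}\to v$) and the fact that $u_k(x_{k,i+1})\to+\infty$ follows directly from boundedness of $\Omega$ and (\textit{H1}), but these are exactly the points settled in the references you (and the paper) invoke.
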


Denote finally by $S$ the set of blow-up points, namely 
$$
S:=\{y\in \bar \Omega\,|\,\exists (y_j)_j \subseteq \Omega \,|\,y_j\to y, u_{k_j}(y_j)\to+\infty \text{ as } j\to +\infty\}.
$$

Lemma \ref{integrability2} has two important consequences: the finiteness of the set $S$ and the local uniform boundedness in a strong norm for the sequence $(u_k)_k$ outside $S$ {(actually, for the subsequence we have extracted by Lemma \ref{integrability2})}.
\begin{prop}\label{Sfinite}
	The blow-up set $S$ is finite; moreover, one has
	\begin{equation}\label{localcompactness}
	\|u_k\|_{W^{3,\infty}_{loc}(\Omegabar\setminus S)}\leq C,
	\end{equation}
	for a uniform constant $C>0$.
\end{prop}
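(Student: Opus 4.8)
\textbf{Proof proposal for Proposition \ref{Sfinite}.}

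The plan is to extract both conclusions from the structural dichotomy provided by Lemma \ref{integrability2}. First I would prove finiteness of $S$. By Lemma \ref{integrability2}(ii)--(iii), around each of the finitely many blow-up points $\XI$, $1\le i\le P$, one has $\liminf_k \int_{B_{R\mu_{k,i}}(x_{k,i})} h(y,u_k(y))\,dy \ge \theta>0$ for every large $R$, with $\theta = 64\pi^2/\beta$ fixed and, crucially, independent of the point (cf.\ the Remark after Lemma \ref{charact}). Combining this with the disjointness property (i) — which guarantees the balls $B_{R\mu_{k,i}}(x_{k,i})$ are pairwise disjoint for $k$ large — and with the total energy bound $\int_\Omega h(x,u_k)\,dx \le \Lambda$ from Proposition \ref{NEWSection1}, we get $P\theta \le \Lambda + o(1)$, hence $P \le \Lambda\beta/(64\pi^2)$ (and in the subcritical case $\beta=0$ the preliminary argument of Section \ref{SectionSubcritical} already excludes any blow-up). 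I would then argue that $S$ coincides with $\{\XI : 1\le i\le P\}$: by definition every $\XI \in S$, and conversely if $y\in S$ there is a sequence $y_j\to y$ with $u_{k_j}(y_j)\to+\infty$, so by Lemma \ref{integrability2}(iv) with $j=0$-type reasoning — more precisely, since $\inf_i |y_j - x_{k_j,i}|\to 0$ is forced (otherwise $h(y_j,u_k(y_j))$, and via elliptic estimates $u_k(y_j)$, would stay bounded near $y$ by (iii)) — we conclude $y = \XI$ for some $i$. Thus $S$ is finite.

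Next I would establish the local uniform $W^{3,\infty}$ bound \eqref{localcompactness} outside $S$. Fix a compact set $K \Subset \overline\Omega\setminus S$. Since $S = \{\XI\}_{i\le P}$ is finite and $\xki\to\XI$, there is $\delta>0$ and $k_0$ such that $\mathrm{dist}(x_{k,i}, K) \ge 2\delta$ for all $i$ and all $k\ge k_0$; hence for every $x$ in a $\delta$-neighborhood of $K$ and every $i$ we have $|x - \xki| \ge \delta$. Lemma \ref{integrability2}(iii) then gives $h(x, u_k(x)) \le C\delta^{-4}$ pointwise on that neighborhood, so in particular $u_k$ is locally uniformly bounded there: indeed $h(x,u_k(x))$ bounded together with (H1)--(H2) and the fact that $f$ is eventually increasing forces $u_k(x)\le C'$ (if $u_k(x)$ were large, $h(x,u_k(x))\ge (1-\varepsilon)a_0 f(u_k(x)) - d_\varepsilon$ would be large by superlinearity of $f$). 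With $u_k$ and $h(\cdot,u_k)$ both in $L^\infty$ on a neighborhood of $K$, I would invoke interior elliptic $L^p$ estimates for $\Delta^2$ (e.g.\ Lemma \ref{RWthm} on balls covering $K$, taking $p$ large, followed by Sobolev embedding $W^{4,p}\hookrightarrow C^{3,\gamma}$) to upgrade to $\|u_k\|_{W^{3,\infty}(K)}\le C$. Alternatively, (iv) of Lemma \ref{integrability2} gives $|\nabla^j u_k(x)| \le C\delta^{-j}$ directly on the neighborhood of $K$, which together with the $L^\infty$ bound on $u_k$ yields \eqref{localcompactness} immediately; I would use whichever is cleaner.

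The main obstacle, I expect, is making rigorous the two "local-to-global" passages that both rest on Lemma \ref{integrability2}: first, that the disjoint concentration balls truly each carry a definite quantum $\theta$ of energy simultaneously (this requires interchanging the $R\to\infty$ limit with the sum over $i\le P$ and with $\liminf_k$, using that $P$ is already finite by part of the statement of Lemma \ref{integrability2} — so there is no circularity, $P$ being fixed once the subsequence is extracted); and second, the identification $S = \{\XI\}$, which needs the claim that a point $y$ can only be a blow-up point if it is approached by one of the finitely many tracked sequences $(x_{k,i})_k$ — the content of part (iii), which says that away from all the $\xki$ the right-hand side $h(x,u_k(x))$ stays bounded, hence so does $u_k$ by the superlinearity argument above, precluding blow-up. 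Everything else is a routine application of interior elliptic regularity.
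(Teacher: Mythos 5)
Your proposal is correct and follows essentially the same route as the paper: the identification $S=\{\XI\,:\,1\le i\le P\}$ via part \textit{(iii)} of Lemma \ref{integrability2} combined with the growth condition \eqref{asymptotic_GROWTH} and the superlinearity of $f$, and the derivative bounds via part \textit{(iv)}. Two small remarks: the energy-quantization count $P\theta\le\Lambda$ is not needed at this stage, since $P$ is already finite by the statement of Lemma \ref{integrability2} (as you yourself note); and when the compact set $K$ meets $\dOmega$ --- where $a$ may vanish under hypothesis \textit{(ii)} of Theorem \ref{uniformboundblowupBALL}, so that boundedness of $h(x,u_k(x))$ alone does not force boundedness of $u_k$ --- the $L^\infty$ bound should be taken directly from Proposition \ref{NEWSection1} on $K\cap\omega$, reserving your superlinearity argument for $K\setminus\omega$ where $a\ge a_0>0$, which is exactly the splitting the paper performs.
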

\begin{proof}
	We prove the finiteness of $S$ by showing that $S=\{\XI, 1\leq i\leq P\}$ defined in Lemma \ref{integrability2}. Indeed, suppose by contradiction that there exists $\bar{x}\not\in (\XI)_{i=1}^P$ and a sequence $\bar x_k\to\bar x$ such that $u_k(\bar x_k)\rightarrow +\infty$. Since $P<\infty$, one has $\inf_{k,i}|\bar x_k-\XI|\geq\bar d>0$. Notice also that $d(\bar x,\dOmega)\geq \eta>0$ by Proposition \ref{NEWSection1}, so $a(\bar{x}_k)\geq a_0>0$. Hence, by \textit{(iii)} of Lemma \ref{integrability2} and \eqref{asymptotic_GROWTH} with $\varepsilon=\frac12$, we get
	\begin{equation*}
	\begin{split}
	\frac{C}{\bar d^4}\geq h(\bar x_k,u_k(\bar x_k))\geq \frac12 a_0f(u_k(\bar x_k))-d,
	\end{split}
	\end{equation*}
	which in turn implies $u_k(\bar x_k)\leq C$ by the superlinearity of $f$.
	
	Let us now consider $K\subset\subset\Omegabar\setminus S$ and $r>0$ such that $K\cap B_r(\XI)=\emptyset$ for each $\XI\in S$. Firstly we know that $(u_k)_k$ is uniformly bounded on $K\cap\omega$ by Proposition \ref{NEWSection1}; in $K\setminus\,\omega$ we have $a(x)\geq a_0>0$ and, moreover, one may suppose $\inf_{1\leq i\leq P}|x-\xki|\geq \frac r2$ by construction. Hence, retracing the same argument above, we obtain also $\|u_k\|_{L^\infty(K\setminus\,\omega)}\leq C$. Finally, the boundedness of the derivatives of $(u_k)_k$ follows by \textit{(iv)} of Lemma \ref{integrability2}.
\end{proof}

So far, we found that there may be only a finite number of blow-up points and that near any of them the blow-up sequence $(u_k)_k$ has the same concentration of energy. We will now prove Theorem \ref{uniformboundblowupBALL} showing that this contradicts the bound on the energy in Proposition \ref{NEWSection1}.
\begin{proof}[Proof of the Theorem \ref{uniformboundblowupBALL}]
By \eqref{maxima}, we know that $S \neq \emptyset$.	Let $x_0\in S$ and consider $r>0$ small enough such that $B_r(x_0)\cap S=\{x_0\}$. Here, define $h_k:=h(x,u_k(x))$ and let $\phi_k$ and $v_k$ be the solutions of the problems
	\begin{equation*}
	\begin{cases}
		\Delta^2\phi_k=h_k\,\,&\mbox{in }B_r(x_0),\\
		\phi_k=\Delta\phi_k=0\,\,&\mbox{on }\partial B_r(x_0),
	\end{cases}\qquad\qquad
	\begin{cases}
		\Delta^2w_k=0\,\,&\mbox{in }B_r(x_0),\\
		w_k=u_k,\;\Delta w_k=\Delta u_k\,\,&\mbox{on }\partial B_r(x_0).                         
	\end{cases}
	\end{equation*}
	Readily we have $u_k=\phi_k+w_k$; moreover, by Proposition \ref{Sfinite} and elliptic estimates (see for instance \cite[Theorem 2.20]{GGS}) it is immediate to infer $\|w_k\|_\infty\leq C$. Furthermore, $\|h_k\|_{L^1(B_r(x_0))}\leq \Lambda$ by Proposition \ref{NEWSection1}, so $h_k\rightharpoonup\mu$, where $\mu\in\mathcal M(B_r(x_0))\cap L^{\infty}_{loc}(B_r(x_0)\setminus\{x_0\})$ is a positive measure.
		
	We first claim that
	\begin{equation}\label{mudelta}
	\mu\geq\theta\delta_{x_0},
	\end{equation}
	where $\delta_{x_0}$ denotes the Dirac distribution centered in $x_0$. Indeed, for any $t\in(0,r)$,
	\begin{equation*}
		\int_{B_t(x_0)}d\mu=\lim_{k\to+\infty}\int_{B_t(x_0)}h(x,u_k(x))dx\geq\liminf_{k\to+\infty}\int_{B_{R\mu_k}(x_k)}h(x,u_k(x))dx\geq\theta
	\end{equation*}
	by Lemma \ref{charact}, where $x_k$ is a blow-up sequence converging to $x_0$.
	
	Moreover, let $\phi$ the distributional solution of
	\begin{equation*}
	\begin{cases}
		\Delta^2\phi=\mu\,\,&\mbox{in }B_r(x_0),\\
		\phi=\Delta\phi=0\,\,&\mbox{on }\partial B_r(x_0).
	\end{cases}
	\end{equation*}
	As $h_k\rightharpoonup\mu$, one may decouple the Navier problem into a system of second-order Dirichlet problems and prove that $\phi_k\to\phi$ in $W^{3,q}(B_r(x_0))$ for $q\in[1,\frac43)$ by means of standard elliptic theory, together with regularity theory for second-order Dirichlet problems with measure data, see for instance \cite[Proposition 5.1]{Ponce}. Furthermore, let $G_{NAV,B_r(x_0)}$ be the Green function of the biharmonic operator with Navier boundary conditions in the ball $B_r(x_0)$. Then we know that $G_{NAV,B_r(x_0)}(\cdot,x_0)\geq0$ solves
	\begin{equation}\label{Green_Nav}
	\begin{cases}
		\Delta^2G_{NAV,B_r(x_0)}(\cdot,x_0)=\delta_{x_0}\,\,&\mbox{in }B_r(x_0),\\
		G_{NAV,B_r(x_0)}(\cdot,x_0)=\Delta G_{NAV,B_r(x_0)}(\cdot,x_0)=0\,\,&\mbox{on }\partial B_r(x_0)
	\end{cases}
	\end{equation}
	and therefore, by \eqref{mudelta} and the maximum principle in \cite[Proposition 6.1]{Ponce},
	\begin{equation}\label{fund_estimate}
		\phi\geq\theta G_{NAV,B_r(x_0)}(\cdot,x_0)
	\end{equation}
	in the distributional sense and almost everywhere in $B_r(x_0)$.
		
	Let now $\varepsilon\in(0,\beta)$ be small. Then by \eqref{fund_estimate},
	\begin{equation*}
	\begin{split}
		\int_{B_r(x_0)}e^{(\beta-\varepsilon)\phi}dx&\geq\int_{B_r(x_0)}e^{(\beta-\varepsilon)\theta G_{NAV,B_r(x_0)}(x,x_0)}.
	\end{split}
	\end{equation*}
	Since the leading term of the Green function in a ball is the fundamental solution, namely
	$$G_{NAV, B_r(x_0)}(x,x_0)=\frac1{8\pi^2}\log\left(\frac1{|x-x_0|}\right)+H_{x_0}(x),$$
	where $H_{x_0}(\cdot)$ a regular function in $\overline{B_r(x_0)}$, then by Lemma \ref{charact} we get
	\begin{equation}\label{contradiction_1}
	\begin{split}
		\int_{B_r(x_0)}e^{(\beta-\varepsilon)\phi}dx&\geq\int_{B_r(x_0)}Ce^{\frac{(\beta-\varepsilon)\theta}{8\pi^2}\log\left(\frac1{|x-x_0|}\right)}dx=C\int_{B_r(x_0)}\left(\frac1{|x-x_0|}\right)^{\frac{(\beta-\varepsilon)\theta}{8\pi^2}}dx\\
		&\geq C\int_{B_r(x_0)}\left(\frac1{|x-x_0|}\right)^{8-\varepsilon'}dx=+\infty
	\end{split}
	\end{equation}
	for $\varepsilon'=\frac{\theta \varepsilon}{8\pi^2}$ sufficiently small. On the other hand, using the decomposition of $u_k$ by means of $\phi_k$ and $w_k$, the boundedness of $w_k$, Lemma \ref{fexpbeta_lem}, \eqref{asymptotic_GROWTH} and Proposition \ref{NEWSection1}, we finally have
	\begin{equation*}
	\begin{split}
		\int_{B_r(x_0)}e^{(\beta-\varepsilon)\phi}dx\,&\leq \lim_{k\to+\infty}\int_{B_r(x_0)}e^{(\beta-\varepsilon)(u_k-w_k)}dx\leq C\lim_{k\to+\infty}\int_{B_r(x_0)}f(u_k(x))dx\\&\leq C_1\lim_{k\to+\infty}\int_{B_r(x_0)}h(x,u_k(x))dx+C_2\leq C(\Lambda).
	\end{split}
	\end{equation*}
	This contradicts \eqref{contradiction_1} and concludes the proof.
	\end{proof}

\section{Some extensions of Theorem \ref{uniformboundblowupBALL}}

\subsection{Extension to general smooth domains}\label{SectionDomains}

Theorem \ref{uniformboundblowupBALL} deals with a large range of nonlinearities, but it applies uniquely when the domain is a ball. The restrictions to its extension to more general domains are mainly two. Firstly, unless the domain is positivity preserving, we cannot ensure that solutions of \eqref{DIRf}, as well as the first eigenfunction $\tilde\varphi_1$, are positive, and all estimates of Section \ref{SectionNaPR} rely on this fact.
Secondly, the two-sided estimate \eqref{2sided} is available only for balls, so we are able to start the moving-planes procedure and control solutions near the boundary as in Lemma \ref{boundaryestDIR} only in this case. Nevertheless, once we consider solutions for which the estimate $\intOmega h(x,u)dx\leq\Lambda$ holds a-priori, a careful reading of Sections \ref{SectionSubcritical} and \ref{SectionQcritical} shows that the blow-up argument applies to \textit{any} domain, provided all blow-up points lye inside the domain. Here, we show how to deal with blow-up at the boundary and we prove Theorem  \ref{uniformboundblowupOMEGA}.

Notice that, in order to exclude concentration near a point $x_\infty\in\dOmega$, we have to impose strict positivity on the coefficient $a$ to be sure that $a(x_\infty)>0$. Moreover, it is not hard to prove by (\textit{H2}) and de l'H\^{o}pital's Theorem that		\begin{equation}\label{hopital}
	\lim_{t\to+\infty}\,\dfrac{H(x,t)}{a(x)F(t)}=\lim_{t\to+\infty}\,\dfrac{h(x,t)}{a(x)f(t)}=1
\end{equation}
uniformly with respect to $x\in\Omega$.
\begin{proof}[Proof of Theorem \ref{uniformboundblowupOMEGA}] 
	By contradiction, suppose the existence of points $(x_k)_{k\in\N}\subset\Omega$ and solutions $(u_k)_{k\in\N}\subset\Hoo$ such that $M_k:=\|u_k\|_{L^\infty(\Omega)}=u_k(x_k)\nearrow+\infty$. We define $\Omega_k$ and $\mu_k$ as in \eqref{muk} and the rescaled functions $v_k$ as in \eqref{vk}. Since $\Omega$ is bounded, the maximum points $x_k$ accumulate at some $x_\infty\in\Omegabar$. We claim that, in any case, $\frac{d(x_k,\dOmega)}{\mu_k}\to+\infty$, so that $\Omega_k\nearrow\R^4$. Indeed, suppose by contradiction that $d(x_k,\dOmega)=O(\mu_k)$, that is, up to an affine transformation, $\Omega_k\rightarrow(-\infty,0)\times\R^3$. Letting $R>0$ and $x\in B_R(0)\cap\Omegabar_k$, with the same computations of Lemma \ref{grad_limitato}, we infer $|\nabla^i v_k|\leq C(R)$ for any $x\in B_R(0)\cap\Omegabar_k$. Choosing $x\in B_R(0)\cap\dOmega_k$, so that $v_k(x)=-M_k$, we would get
	\begin{equation*}
	M_k=|v_k(x)|=|v_k(x)-v_k(0)|\leq CR,
	\end{equation*}
	a contradiction. Then, the same compactness argument as in Section \ref{SectionBlowup} proves that $v_k\to v$ locally uniformly as $k\to +\infty$, where $v$ is a solution of $\Delta^2v=a(x_\infty)e^{\beta v}$ in $\R^4$.  For the subcritical case \textit{(1)} the a-priori bound is obtained with the same argument as in Section \ref{SectionSubcritical}. 
	
	Let us suppose now assumption \textit{(2)}.  In this case, the analogues of Lemmas \ref{integrability}-\ref{integrability2} and Proposition \ref{Sfinite} hold, so in particular the blow-up set $S$ of $u_k$ is finite and we have the concentration of the energy near the points of $S$. However, the final argument in Section \ref{SectionBlowup} permits to exclude only the existence of blow-up points inside $\Omega$, but not those lying on $\dOmega$. To rule out also this possibility, firstly we study the global limit profile of the sequence $(u_k)_k$ and then, in the spirit of \cite{RobWei}, we apply a Poho\v{z}aev-type identity.

	\begin{lem}\label{lemma_limits}
		Let $h_k(x):=h(x,u_k(x))$. Then, {up to a subsequence,}
		\begin{equation}\label{RHSlimit}
		h_k\rightharpoonup r(x)dx+\sum_{p\in S}m_p\delta_p,
		\end{equation}
		where $m_p$ are nonnegative constants, $0\leq r\in L^1(\Omega)$ and $\delta_p$ are Delta distributions centered in $p$. Moreover,
		\begin{equation}\label{uk_limit}
		u_k\to U
		\end{equation}
		in $W^{2,q}_0(\Omega)\cap C^{3,\alpha}_{loc}(\Omegabar\setminus S)$ for $1\leq q<2$, where $U$ solves
		\begin{equation}\label{U_Gp}
		\begin{cases}
		\Delta^2U=r(x)\,\,&\mbox{in }\Omega,\\
		U=U_n=0\,\,&\mbox{on }\dOmega.
		\end{cases}
		\end{equation}
	\end{lem}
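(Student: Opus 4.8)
The plan is to run a concentration--compactness analysis for the (sub)sequence $(u_k)_k$, built on the a-priori energy bound $\intOmega h(x,u_k)\,dx\le\Lambda$ assumed in Theorem \ref{uniformboundblowupOMEGA} and on the finiteness of the blow-up set $S$ (the analogue of Lemmas \ref{integrability}--\ref{integrability2} and Proposition \ref{Sfinite} under hypothesis \textit{(2)}). Writing $h_k:=h(\cdot,u_k)\ge 0$, the measures $h_k\,dx$ are bounded in $\mathcal M(\Omegabar)$, so up to a subsequence $h_k\rightharpoonup\nu$ weakly-$*$ in $\mathcal M(\Omegabar)$ for some nonnegative finite measure $\nu$. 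The first goal is to identify the structure of $\nu$: to show it has finitely many atoms, located on $S$, with absolutely continuous part the natural candidate $r:=h(\cdot,U)$.

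To that end, recall that (the analogue of) Proposition \ref{Sfinite} gives the finiteness of $S$ together with $\|u_k\|_{W^{3,\infty}_{loc}(\Omegabar\setminus S)}\le C$. By \textit{(H1)} the sequence $h_k$ is then bounded in $L^p_{loc}(\Omegabar\setminus S)$ for every $p<\infty$, so interior and boundary $L^p$-estimates for the biharmonic Dirichlet problem (using $u_k=\partial_n u_k=0$ on $\dOmega$) yield a uniform bound for $u_k$ in $W^{4,p}_{loc}(\Omegabar\setminus S)$; hence, along a further subsequence, $u_k\to U$ in $C^{3,\alpha}_{loc}(\Omegabar\setminus S)$ for every $\alpha\in(0,1)$. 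Since $h$ is \Caratheodory and $u_k\to U$ uniformly on compact subsets of $\Omegabar\setminus S$ with values in a fixed bounded interval, dominated convergence gives $h_k\to h(\cdot,U)=:r$ in $L^1_{loc}(\Omegabar\setminus S)$, so that $\nu$ coincides with $r\,dx$ on $\Omegabar\setminus S$, with $0\le r\in L^1(\Omega)$. Because $S$ is finite, this forces $\nu=r\,dx+\sum_{p\in S}m_p\delta_p$ with $m_p:=\nu(\{p\})\ge 0$, which is precisely \eqref{RHSlimit}.

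It remains to establish \eqref{uk_limit}. Using the Green representation $u_k(x)=\intOmega G_\Omega(x,y)h_k(y)\,dy$ and the pointwise bounds of Lemma \ref{Greenspropetry} --- in particular $|\nabla^2 G_\Omega(x,y)|\preceq|x-y|^{-2}$, with $|x-y|^{-2}$ locally $L^q$-integrable in $\R^4$ exactly when $q<2$ --- Young's convolution inequality gives $\|u_k\|_{W^{2,q}_0(\Omega)}\preceq\|h_k\|_{L^1(\Omega)}\le C(q)$ for every $q\in[1,2)$ (the lower-order terms being controlled in the same way via \eqref{Greenspropetry_0} and $|\nabla G_\Omega|\preceq|x-y|^{-1}$). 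Passing to the limit in the weak formulation $\intOmega\Delta u_k\Delta\varphi=\intOmega h_k\varphi$ with $\varphi\in W^{2,q'}_0(\Omega)\hookrightarrow C(\Omegabar)$, $q'>2$, identifies the weak $W^{2,q}_0$-limit with the same $U$ as above and shows $\Delta^2 U=\nu$ in $\Omega$, $U=\partial_n U=0$ on $\dOmega$; equivalently $U=U_0+\sum_{p\in S}m_p G_\Omega(\cdot,p)$ with $U_0$ the $W^{2,q}_0$-solution of \eqref{U_Gp}, each $G_\Omega(\cdot,p)$ lying in $W^{2,q}_0(\Omega)$ for $q<2$. The genuine obstacle is to \emph{upgrade weak to strong} convergence in $W^{2,q}_0(\Omega)$, since weak-$*$ convergence of $h_k$ gives no control on $\|h_k-\nu\|_{\mathcal M}$. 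The clean way is to observe that for $q<2$ the embedding $\mathcal M(\Omegabar)\hookrightarrow W^{-2,q}(\Omega)$ is \emph{compact} (it is the adjoint of the compact Morrey embedding $W^{2,q'}_0(\Omega)\hookrightarrow C(\Omegabar)$, $q'>2$); hence $h_k\to\nu$ strongly in $W^{-2,q}(\Omega)$, and by the $L^q$-theory for $\Delta^2$ with Dirichlet data $u_k=(\Delta^2)^{-1}h_k\to(\Delta^2)^{-1}\nu=U$ strongly in $W^{2,q}_0(\Omega)$ for all $q\in(1,2)$, and in $W^{2,1}_0(\Omega)$ as well since $\Omega$ is bounded. (Alternatively, one can argue directly by splitting $\Omega$ into the balls $B_\rho(p)$, $p\in S$, and the complement: on the complement the $C^{3,\alpha}_{loc}$-convergence already gives $W^{2,q}$-convergence, while on each $B_\rho(p)$ the Green kernel estimate and $\limsup_k\int_{B_{2\rho}(p)}h_k\le m_p+o_\rho(1)$ bound the residual $W^{2,q}$-mass by a quantity vanishing as $\rho\to 0$.) In either route the threshold $q<2$ --- the subcritical one for $W^{2,q'}_0\hookrightarrow C^0$ --- is exactly what makes the representation integrals converge and prevents loss of mass near $S$, and this is the step I expect to require the most care.
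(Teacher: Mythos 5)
Your argument follows the paper's route: weak-$*$ compactness of the bounded measures $h_k\,dx$, combined with the finiteness of $S$ and the local bounds of Proposition \ref{Sfinite}, gives the decomposition \eqref{RHSlimit}, and the convergence \eqref{uk_limit} is obtained by pairing the $L^1$ bound on $h_k$ with compactness in $W^{2,q}_0(\Omega)$ for $q<2$ and identifying the limit through the weak formulation. The paper merely asserts this last compactness; your justification of it (uniform $W^{2,q}_0$ bounds from the kernel estimates of Lemma \ref{Greenspropetry}, then strong convergence via the compactness of $\mathcal{M}(\Omegabar)\hookrightarrow W^{-2,q}(\Omega)$, dual to the compact embedding $W^{2,q'}_0(\Omega)\hookrightarrow C(\Omegabar)$ for $q'>2$ in $\R^4$) is correct and fills a detail the paper leaves implicit.

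One point must be added. Your limit equation is $\Delta^2U=\nu=r\,dx+\sum_{p\in S}m_p\delta_p$, and your representation $U=U_0+\sum_{p}m_pG_\Omega(\cdot,p)$ allows for interior atoms; taken literally this contradicts the assertion of the lemma, namely that $U$ itself solves \eqref{U_Gp} with right-hand side $r$ alone. The missing observation --- which the paper records with the phrase ``since no interior blow-up occurs'' --- is that at this stage of the proof of Theorem \ref{uniformboundblowupOMEGA} interior blow-up has already been ruled out by the argument of Section \ref{SectionBlowup}, so that $S\subset\dOmega$. Consequently the atoms $m_p\delta_p$ are supported on $\dOmega$, they vanish against every $\varphi\in C^\infty_c(\Omega)$, and $G_\Omega(\cdot,p)\equiv 0$ for $p\in\dOmega$; hence $\Delta^2 U=r$ in $\mathcal{D}'(\Omega)$ and $U=U_0$. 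With this remark inserted, your proof is complete.
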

	\begin{proof}
Let $K\subset\subset\Omegabar\setminus S$, so $\|u_k\|_{L^\infty(K)}\leq C(K)$ by Proposition \ref{Sfinite} and in turn $\|h_k\|_{L^\infty(K)}\leq C(K)$. This means that the singular part of the limit lies in $S$, namely \eqref{RHSlimit} holds for $r\in L^1(\Omega)\cap L_{loc}^\infty(\Omegabar\setminus S)$. Moreover, let $U$ be as in \eqref{U_Gp} and $\varphi\in C^\infty_c(\Omega)$. Then, since no interior blow-up occurs, we have
		\begin{equation}
		\intOmega\Delta u_k\Delta\varphi=\intOmega h(x,u_k(x))\varphi(x)dx\rightarrow\intOmega r(x)\varphi(x)dx
		=\intOmega\Delta U\Delta\varphi.
		\end{equation}
		On the other hand, as $\|h_k\|_{L^1(\Omega)}\leq\Lambda$ by assumption, then $u_k\to\tilde u$ in $W^{2,q}_0(\Omega)$ for any $1\leq q<2$ and
		$$\intOmega\Delta u_k\Delta\varphi\to\intOmega\Delta\tilde u\Delta\varphi.$$
		By uniqueness of the limit, \eqref{uk_limit} is proved.
	\end{proof}

The next lemma clarifies the behaviour of $U$ near the points of $S$. The proof is inspired by \cite[Lemma 3.7]{JY}. 
\begin{lem}\label{LemmabehaviourU}
For any $\varepsilon >0$, there exists $r_\varepsilon>0$ such that the following holds: for any $x_0\in S$ and any $x\in \Omega$ s.t. $0<|x-x_0|< r_\varepsilon$ we have
\begin{equation}\label{estU}
|U|\le \varepsilon \log\left(2+\frac{1}{|x-x_0|}\right)
\end{equation}
and
\begin{equation}\label{estnabla^iU}
|\nabla^i U|\le \frac{\varepsilon}{|x-x_0|^i},
\end{equation}
for $i=1,2,3$. 
\end{lem}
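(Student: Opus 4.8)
The plan is to argue directly from the Green representation $U(x)=\int_\Omega G_\Omega(x,y)\,r(y)\,dy$, which is available because $\Delta^2U=r$ in $\Omega$ with $U=U_n=0$ on $\dOmega$ by Lemma \ref{lemma_limits}, and to combine the pointwise Green bounds of Lemma \ref{Greenspropetry} with two facts about $r$. The first is that $r\in L^1(\Omega)$: since $S$ is finite, there is a radius $\rho_0=\rho_0(\varepsilon)>0$, independent of $x_0\in S$, such that the balls $B_{\rho_0}(x_0)$, $x_0\in S$, are pairwise disjoint, each contains no point of $S$ besides its centre, and $\int_{B_{\rho_0}(x_0)\cap\Omega}r<m_0$, where $m_0$ is a small parameter to be fixed later in terms of $\varepsilon$ and the structural constants. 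The second fact — the key extra ingredient — is a pointwise decay estimate $r(y)\le C_*|y-x_0|^{-4}$ for $y\in\Omega$ with $|y-x_0|<\rho_0$; this is obtained by letting $k\to\infty$ in the bound $\min_i|y-x_{k,i}|^4h(y,u_k(y))\le C$ from (the analogue of) Lemma \ref{integrability2}(iii), using that $u_k\to U$ in $C^{3,\alpha}_{loc}(\Omegabar\setminus S)$ (so $h(y,u_k(y))\to r(y)$ for a.e. $y\notin S$) and that, for $\rho_0$ small, $x_0$ realises $\min_{p\in S}|y-p|$ for $|y-x_0|<\rho_0$.

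Fix $x_0\in S$ and $x\in\Omega$ with $d:=|x-x_0|<r_\varepsilon\le\rho_0/4$, and split $U=U_1+U_2$, where $U_1(x)=\int_{B_{\rho_0}(x_0)\cap\Omega}G_\Omega(x,y)r(y)\,dy$ and $U_2$ collects the rest. On the support of the second integral $|x-y|\ge\rho_0/2$, so Lemma \ref{Greenspropetry} gives $|U_2(x)|+\sum_{i=1}^{3}|\nabla^iU_2(x)|\le C(\rho_0)\|r\|_1$, a quantity bounded by a constant depending only on $\varepsilon$ and $\Lambda$. For $U_1$ the plan is to split $B_{\rho_0}(x_0)\cap\Omega$ into the annular region $\{|x-y|\ge d/4\}$ and the ball $B_{d/4}(x)\cap\Omega$. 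On the annulus $\log(2+|x-y|^{-1})\le C\log(2+d^{-1})$ and $|x-y|^{-i}\le Cd^{-i}$, so, using $\int_{B_{\rho_0}(x_0)\cap\Omega}r<m_0$, that part contributes at most $Cm_0\log(2+d^{-1})$ to $|U_1|$ and $Cm_0d^{-i}$ to $|\nabla^iU_1|$. On $B_{d/4}(x)\cap\Omega$ one has $|y-x_0|\ge 3d/4$, hence $r(y)\le C_*(3d/4)^{-4}$ by the decay estimate.

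The delicate contribution is this last one, where $r$ is not small but only of size $d^{-4}$. Here I would split once more at an intermediate scale $t=\kappa d$ with $0<\kappa<1/4$: on $B_{d/4}(x)\setminus B_t(x)$ use $|x-y|\ge t$ together with the small mass $\int_{B_{\rho_0}(x_0)\cap\Omega}r<m_0$, while on $B_t(x)$ use $r\le C_*(3d/4)^{-4}$ together with the elementary estimates $\int_{B_t}\log(2+|z|^{-1})\,dz\le Ct^4\log(2+t^{-1})$ and $\int_{B_t}|z|^{-i}\,dz\le Ct^{4-i}$ for $i=1,2,3$. Up to fixed constants this yields a bound $\big(m_0+C\kappa^4\big)\log\big(2+(\kappa d)^{-1}\big)\le\big(m_0+C\kappa^4\big)\big(\log\kappa^{-1}+\log(2+d^{-1})\big)$ for the $B_{d/4}(x)$ part of $|U_1|$, and $\big(m_0\kappa^{-i}+C\kappa^{4-i}\big)d^{-i}$ for that of $|\nabla^iU_1|$. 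Choosing $\kappa\sim m_0^{1/4}$ makes the coefficients $m_0+C\kappa^4$ and $m_0\kappa^{-i}+C\kappa^{4-i}$ (here one uses $4-i\ge1$) as small as desired for $m_0$ small, at the cost of an additive constant $C(m_0)$. Collecting the three contributions, one arrives at $|U(x)|\le\tfrac\varepsilon2\log(2+d^{-1})+C(\varepsilon)$ and $|\nabla^iU(x)|\le\tfrac\varepsilon2\,d^{-i}+C(\varepsilon)$ once $m_0$ (equivalently $\rho_0$) is small enough; since $\log(2+d^{-1})\to+\infty$ and $d^{-i}\to+\infty$ as $d\to0^+$, it finally suffices to shrink $r_\varepsilon$ (finitely many conditions, uniform over the finitely many points of $S$) so that $C(\varepsilon)\le\tfrac\varepsilon2\log(2+r_\varepsilon^{-1})$ and $C(\varepsilon)\le\tfrac\varepsilon2\,r_\varepsilon^{-i}$, which gives \eqref{estU} and \eqref{estnabla^iU}. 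The only genuine difficulty is exactly this balancing at the critical scale $|y-x_0|\sim d$: neither the pointwise bound $r\le C_*d^{-4}$ nor the smallness of the mass near $x_0$ is enough on its own, and it is the absence of an atom of $r$ at $x_0$ — i.e.\ $r\in L^1(\Omega)$ — that produces the gain by the factor $\varepsilon$.
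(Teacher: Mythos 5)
Your proposal is correct and follows essentially the same route as the paper: Green representation plus the pointwise bounds of Lemma \ref{Greenspropetry}, the decay $r(y)\preceq\min_{p\in S}|y-p|^{-4}$ inherited from Lemma \ref{integrability2}(iii), and absolute continuity of $\int r$ to make the mass near $x_0$ small, with the same three-region splitting (far from $x_0$ / intermediate annulus via small mass / small ball around $x$ via the pointwise decay of $r$). Your extra cutoff at radius $d/4$ before the final one at $\kappa d$ is redundant but harmless; the parameter balancing $\kappa\sim m_0^{1/4}$ plays exactly the role of the paper's choice of $\sigma$ and $\delta$.
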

\begin{proof}
Let us denote $f_i(t):= \frac{1}{t^i}$ for $i=1,2,3$ and $f_0(t)=\log(2+\frac{1}{t})$. By Lemma \ref{Greenspropetry} and part (\textit{iv}) of Lemma \ref{integrability2}, there exists a constant $\bar C>0$ such that 
 $|\nabla^i G(x,y)|\le \bar C f_i(|x-y|)$ and 
 \begin{equation}\label{boundonr}
\max_{y\in \Omega}\big(r(y)\min_{p\in S} |y-p|^4\big)\le \bar C.
 \end{equation} 
 Moreover, we can assume that 
 \begin{equation}\label{furthercondition}
\int_{B_{s}(0)}f_i(|x|)dx \le \bar C s^4 f_i(s),
\end{equation}
for any $s\in (0,1)$. Let us choose $\sigma=\sigma(\varepsilon)\in (0,1)$ such that $$\frac{\bar C^3 \sigma}{(1-\sigma)^4} \le \frac{\varepsilon}{4}\quad\mbox{and}\quad\sigma^3f_0(\sigma)\le\log 2.$$
Then, let $\delta= \delta(\varepsilon)<\frac{1}{4} \min_{p,q\in S} |p-q|$, be such that $\frac{\bar C}{\sigma^3}\|r\|_{L^1(B_{2\delta}(x_0))} \le \frac{\varepsilon}{4}$. Let us now take $x_0\in S$ and $x\in \Omega$ with $|x-x_0|<\delta$. Using the Green representation formula, we have
$$
|\nabla^i U| \le \bar C \int_{\Omega} f_i(|x-y|)r(y) dy. 
$$ 
We split the integral into three regions:
$$\Omega_1:= \Omega\setminus B_{2\delta(x_0)},\quad\Omega_2:=\Omega\cap B_{2\delta} (x_0)\cap B_{\sigma |x-x_0|}(x),\quad\Omega_3:=\Omega \cap(B_{2\delta} (x_0)\setminus B_{\sigma |x-x_0|}(x)).$$
First, for any $y\in \Omega_1$, we have $|x-y|\ge |y-x_0|-|x-x_0|\ge  \delta$, so that 
\begin{equation}\label{omega1}
\int_{\Omega_1} f_i(|x-y|)r(y) dx  \le  f_i(\delta) \|r\|_{L^1(\Omega)}.
\end{equation}
For $y\in \Omega_2$, we have $|y-x_0|<|y-x|+|x-x_0|\le (1+\sigma)|x-x_0|< \frac{1}{2}d(y,S\setminus \{x_0\})$  and  $|y-x_0|\ge |x-x_0|-|y-x|\ge (1-\sigma)r$. Then, thanks to \eqref{boundonr} and \eqref{furthercondition} and our choice of $\sigma$, we get 
\begin{equation}\label{omega2}
\begin{split}
\int_{\Omega_2} f_i(|x-y|)r(y)dy &\le \bar C\int_{\Omega_2} \frac{f_i(|x-y|)}{|y-x_0|4} dy \le \frac{\bar C}{(1-\sigma)^4|x-x_0|^4} \int_{B_{\sigma|x-x_0|}(x)} f_i(|x-y|)dy \\
&\le \frac{\bar C^2\sigma^4}{(1-\sigma)^4} f_i(\sigma |x-x_0|) \le  \frac{\varepsilon \sigma^3}{4\bar C} f_i(\sigma |x-x_0|). 
\end{split}
\end{equation}
Finally, on $\Omega_3$ we have the estimate
\begin{equation}\label{omega3}
\int_{\Omega_3} f_i(|x-y|) r(y) dy \le f_i(\sigma|x-x_0|)\int_{B_{2\delta}(x_0)} r(y) dy \le  \frac{\varepsilon \sigma^3}{4\bar C} f_i(\sigma|x-x_0|).
\end{equation}
where the last inequality follows from the choice of $\delta$. Observe that our choice of $\sigma$ guarantees that  $\sigma^3 f_i(\sigma|x-x_0|)\le f_i(|x-x_0|)$ for $i=0,1,2,3$. Using \eqref{omega1}-\eqref{omega3} we can conclude that there exists $K=K(\varepsilon)$ such that
$$
|\nabla^i U(x)|\le K + \frac{\varepsilon}{2} f_i(|x-x_0|).
$$
Then, it is sufficient to choose $r_\varepsilon\in (0,\delta)$ such $K\le \frac{\varepsilon}{2} f_i(r_\varepsilon)$.  
\end{proof}

The following version of the Poho\v{z}aev identity can be proved as in \cite{RobWei} Lemma 2.2] with only minor modifications (see also \cite{Mit}).
	
	\begin{lem}\label{MitMOD}
		Let $u\in H^4(\Omega)$ be a strong solution of $\Delta^2u=h(x,u)$ in $\Omega$. Then, for any $y\in\R^4$, we have
		\begin{equation*}
			4\int_\Omega H(x,u)dx+\int_\Omega\langle x-y,\nabla_xH(x,u)\rangle dx=\int_\dOmega\langle x-y,n(x)\rangle H(x,u)d\sigma+b(y,u),
		\end{equation*}
		where $H$ is defined in \eqref{F} and $b$ collects all remaining boundary terms:
		\begin{equation*}
			\begin{split}
				b(y,u):=&\;\dfrac12\int_\dOmega(\Delta u)^2\langle x-y,n(x)\rangle d\sigma-2\int_\dOmega u_n\Delta ud\sigma-\int_\dOmega(\Delta u)_n\langle x-y,\nabla u\rangle d\sigma\\
				&-\int_\dOmega u_n\langle x-y,\nabla(\Delta u)\rangle d\sigma+\int_\dOmega\langle\nabla(\Delta u),\nabla u\rangle\langle x-y,n(x)\rangle d\sigma.
			\end{split}
		\end{equation*}
	\end{lem}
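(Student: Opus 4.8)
The stated equality is a Rellich--Poho\v{z}aev identity for $\Delta^2$ in the conformal dimension $N=4$, and the plan is to derive it by the classical multiplier argument; the only difference with respect to \cite[Lemma~2.2]{RobWei} (see also \cite{Mit}) is the treatment of the explicit dependence of $h$ on $x$, which will produce exactly the interior term $\intOmega\langle x-y,\nabla_xH(x,u)\rangle\,dx$.

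\emph{Reduction to smooth $u$.} It suffices to prove the identity for $u\in C^\infty(\Omegabar)$ and then pass to the limit along a sequence in $C^\infty(\Omegabar)$ converging to $u$ in $H^4(\Omega)$, since every term in the statement depends continuously on $u$ for the $H^4(\Omega)$-topology. For the interior terms this is clear, as $H^4(\Omega)\hookrightarrow C^{1,\alpha}(\Omegabar)$ in dimension four, so $u$ is bounded and, by \eqref{asymptotic_GROWTH} and Lemma~\ref{fexpbeta_lem}, so are $h(\cdot,u(\cdot))$ and $H(\cdot,u(\cdot))$; for the boundary terms it follows from the trace theorem, each of them being a product of traces on $\dOmega$ of $u$, $\nabla u$, $\Delta u$, $D^2u$ and $\nabla(\Delta u)$, which are well defined because $\nabla u\in H^3(\Omega)$ and $\Delta u\in H^2(\Omega)$.

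\emph{The multiplier computation.} Multiply $\Delta^2u=h(x,u)$ by $\langle x-y,\nabla u\rangle$ and integrate over $\Omega$. On one hand, repeated integration by parts expresses $\intOmega\Delta^2u\,\langle x-y,\nabla u\rangle\,dx$ as a sum of boundary integrals: one uses Green's second identity twice to move the two Laplacians off $u$, together with the algebraic identities $\Delta\langle x-y,\nabla u\rangle=2\Delta u+\langle x-y,\nabla(\Delta u)\rangle$ and $\nabla\langle x-y,\nabla u\rangle=\nabla u+D^2u\,(x-y)$ and with $\intOmega\langle x-y,\nabla g\rangle\,dx=\int_\dOmega\langle x-y,n\rangle g\,d\sigma-4\intOmega g\,dx$; the decisive point --- the reason $N=4$ is the conformal dimension here --- is that the interior contributions combine into a multiple of $(N-4)\intOmega(\Delta u)^2\,dx$ and hence cancel. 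On the other hand, using the equation and the chain rule $\nabla_x\big(H(x,u(x))\big)=(\nabla_xH)(x,u)+h(x,u)\nabla u$, one gets $\intOmega\Delta^2u\,\langle x-y,\nabla u\rangle\,dx=\intOmega\langle x-y,\nabla\big(H(\cdot,u(\cdot))\big)\rangle\,dx-\intOmega\langle x-y,\nabla_xH(x,u)\rangle\,dx$, and integrating the first integral by parts, again with $\ddiv(x-y)=4$ in $\R^4$, produces $-4\intOmega H(x,u)\,dx+\int_\dOmega\langle x-y,n\rangle H(x,u)\,d\sigma$. Equating the two expressions for $\intOmega\Delta^2u\,\langle x-y,\nabla u\rangle\,dx$ and rearranging yields the identity, with $b(y,u)$ given by the boundary terms produced above --- after, if needed, one integration by parts on the closed hypersurface $\dOmega$ to bring them to the displayed form. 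The point $y$ plays no role, so the identity holds for every $y\in\R^4$.

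\emph{Main obstacle.} The only non-routine part is the bookkeeping in the Rellich computation: tracking all the boundary terms generated by the repeated integrations by parts, verifying the cancellation of the $(\Delta u)^2$ interior term, and reorganizing what remains into exactly the five integrals defining $b(y,u)$. The chain-rule manipulation of the nonlinearity and the density argument are straightforward, which is precisely why the statement requires only ``minor modifications'' of \cite[Lemma~2.2]{RobWei}.
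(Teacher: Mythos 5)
Your proposal is correct and follows essentially the same route as the paper, which itself gives no computation but simply invokes the standard Rellich--Poho\v{z}aev multiplier argument of \cite[Lemma 2.2]{RobWei} and \cite{Mit}; your outline identifies precisely the one genuine modification (the chain rule producing the $\intOmega\langle x-y,\nabla_xH(x,u)\rangle\,dx$ term) and correctly locates the dimension-four cancellation of the interior $(\Delta u)^2$ contribution. The remaining work you defer --- the bookkeeping of boundary terms and the density/trace argument for $u\in H^4(\Omega)$ --- is exactly what the paper also leaves to the cited references.
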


		Let us return to the proof of Theorem \ref{uniformboundblowupOMEGA}. Let $x_0\in S\subset\dOmega$, $r>0$ be sufficiently small such that $B_r(x_0)\cap S=\{x_0\}$ and apply the identity of Lemma \ref{MitMOD} in $\Omega\cap B_r(x_0)$ to $u_k\in H^4(\Omega)$ by elliptic regularity. Notice that we have to deal with two kinds of boundary terms, the ones relative to $\Omega\cap\partial B_r(x_0)$ and the others to $\dOmega\cap B_r(x_0)$. Notice also that for any choice $y$ all boundary terms on $\dOmega\cap B_r(x_0)$ involving derivatives of $u_k$ vanish by the boundary conditions satisfied by $u_k$ except for the term $\int_{\dOmega\cap B_r(x_0)}(\Delta u_k)^2\langle x-y,n(x)\rangle d\sigma$. Therefore, we have to choose in a clever way a sequence of points $(y_k)_k$ so that this term vanishes too. Following \cite{RobWei}, we define $y_k:=x_0+\rho_{k,r}n(x_0)$, where
		\begin{equation*}
		\rho_{k,r}:=\dfrac{\int_{\dOmega\cap B_r(x_0)}(\Delta u_k)^2\langle x-x_0,n(x)\rangle dx}{\int_{\dOmega\cap B_r(x_0)}(\Delta u_k)^2\langle n(x_0),n(x)\rangle dx}
		\end{equation*}
		and, up to a smaller value, we may choose $r$ so that $\frac12\leq\langle n(x_0),n(x)\rangle\leq1$ for all $x\in\overline B_r(x_0)\cap\Omega$. With these choices, we have $|\rho_{k,r}|\leq2r$ and
		\begin{equation*}
		\int_{\dOmega\cap B_r(x_0)}(\Delta u_k)^2\langle x-y_k,n(x)\rangle dx=0.
		\end{equation*}
		Our aim is to prove that, while the terms on the left-hand side of the Poho\v{z}aev identity are bounded from below by a positive constant thanks to the concentration of the energy near the blow-up points, on the other hand all the remaining boundary terms on the right-hand side can be made arbitrary small as $r\to 0$.
		
		Let us start from the left-hand side: \eqref{hopital} and the assumption \eqref{hpnablaH} imply
		\begin{equation*}
			\begin{split}
				&4\int_{\Omega\cap B_r(x_0)} H(x,u_k)dx+\int_{\Omega\cap B_r(x_0)}\langle x-y_k,\nabla_xH(x,u_k)\rangle dx\\
				&\geq2\int_{\Omega\cap B_r(x_0)} a(x)F(u_k)dx-4d |B_r(x_0)|-\int_{\Omega\cap B_r(x_0)}3rB(x)F(u_k)dx -3r\int_{\Omega\cap B_r(x_0)}D(x)dx \\
				&\geq \left(a(x_0)-3r\|B\|_{L^\infty(\omega)}\right)\int_{\Omega\cap B_r(x_0)}F(u_k)dx-d Cr^4-3r\int_{\omega}D(x)dx ,
			\end{split}
		\end{equation*}
		when $r>0$ is so small that $a(x)>\frac{a(x_0)}{2}$ for any $x\in B_r(x_0)$. Supposing further, up to a smaller value of $r$ that $3r\|B\|_{L^\infty(\omega)}\leq\frac{a(x_0)}{2}$, we find
		\begin{equation}\label{LHS}
			4\int_{\Omega\cap B_r(x_0)} H(x,u_k)dx+\int_{\Omega\cap B_r(x_0)}\langle x-y_k,\nabla_xH(x,u_k)\rangle dx+o(r)\geq\frac{a(x_0)}{2}\int_{\Omega\cap B_r(x_0)}F(u_k)dx.
		\end{equation}
		We now claim there exists $m\geq1$ such that
		\begin{equation}\label{riducodim}
			F(t)\geq\dfrac1m \big(f(t)-f(0)\big)\qquad\mbox{for any}\;\;t\geq0.
		\end{equation}
		Indeed, let us define
		$$m:=\max\bigg\{1,\max_{t\geq0}\dfrac{f'(t)}{f(t)}\bigg\}.$$
		Then $1\leq m<+\infty$ since $0<f\in C^1([0,+\infty))$ by (\textit{A1}), together with the limit assumption (\textit{A3}). Hence
		$$\bigg(F-\dfrac1mf\bigg)'(t)=f(t)-\dfrac1mf'(t)=f(t)\bigg(1-\dfrac1m\dfrac{f'(t)}{f(t)}\bigg)\geq 0,$$
		which implies \eqref{riducodim}.
		Finally, we further estimate from below the left-hand side of \eqref{LHS}, by \eqref{riducodim} and Lemma \ref{charact}, obtaining
		\begin{equation}\label{LHSPoho}
			\begin{split}
				\frac{a(x_0)}{2}\int_{\Omega\cap B_r(x_0)} F(u_k)&\geq \frac{a(x_0)}{2m}\int_{\Omega\cap B_r(x_0)}f(u_k) + o(r)\\
				&\geq\frac{a(x_0)}{4m\|a\|_\infty}\bigg[\int_{\Omega\cap B_r(x_0)}(2a(x)f(u_k)+d)-d|B_r(x_0)|\bigg] +o(r)\\
				&\geq\frac{a(x_0)}{4m\|a\|_\infty}\int_{\Omega\cap B_r(x_0)}h(x,u_k(x))dx + o(r)\\
				&\geq\frac{a(x_0)}{4m\|a\|_\infty}\dfrac\theta 4 +o(r).
			\end{split}
		\end{equation}

		Let us now prove that the boundary terms in the right-hand side of the Poho\v{z}aev identity may be arbitrary small as $r\to 0$. We recall that we only have to analyse those on $\Omega\cap \partial B_r(x_0)$. To this aim, let $U$ as in Lemmas \ref{lemma_limits}-\ref{LemmabehaviourU}.

		Since $|x-y_k|\leq|x-x_0|+|\rho_{k,r}|\leq 3r$, by means of Lemma \ref{lemma_limits} and \eqref{estnabla^iU}, we obtain that 
		\begin{equation*}
		\begin{split}
		\left|\int_{\Omega\cap\partial B_r(x_0)}(\Delta u_k)^2\langle x-y_k,n(x)\rangle d\sigma\right|&\leq o(1)+3r\int_{\Omega\cap\partial B_r(x_0)}|\Delta U|^2 d\sigma,\\
		&\leq o(1)+C\varepsilon^2
		\end{split}
		\end{equation*}
		as $r\to 0$. In a similar way, one can estimate the other boundary terms in $b(y_k,u_k)$.	
     	It remains now to control the term
		$$\int_{\Omega\cap\partial B_r(x_0)}\langle x-y_k,n(x)\rangle H(x,u_k)d\sigma.$$
		From \eqref{hopital} and simply integrating term by term the inequalities \eqref{fexpbeta} we get
		$$H(x,t)\leq2\|a\|_\infty F(t)+D\leq C(e^{(\beta+1)t}+1).$$
		Moreover, by Lemma \ref{lemma_limits} and \eqref{estnabla^iU}
		\begin{equation*}
		H(x,u_k(x))\leq Ce^{(\beta+1)U}+C \le C\bigg( \bigg(2+\frac{1}{r}\bigg)^{\varepsilon(\beta+1)}+1 \bigg).
		\end{equation*}
		Therefore, if $\varepsilon<\frac4{\beta+1}$,we obtain
		\begin{equation*}
		\begin{split}
		\bigg|\int_{\Omega\cap\partial B_r(x_0)}&\langle x-y_k,n(x)\rangle H(x,u_k)d\sigma \bigg|
		\leq 3 r \int_{\Omega\cap\partial B_r(x_0)} H(x,u_k)d\sigma \to 0.
		\end{split}
		\end{equation*}
		In conclusion, we proved that all boundary terms of Poho\v{z}aev identity can be made arbitrary small as $r\to 0$, and this contradicts \eqref{LHS}-\eqref{LHSPoho}. 
		\end{proof}

\subsection{Extension to the polyharmonic case}\label{SectionPOLY}

By now, we proved the a-priori bound for problem \eqref{DIRf_generalpoly} when $m=2$. In this section we show that the same strategy applies also in the general polyharmonic case:
\begin{equation}\label{polyDIRa}
\begin{cases}
(-\Delta)^mu=h(x,u)\quad&\mbox{in }\Omega\subset\R^{2m},\,\,m\geq1,\\
u=\partial_nu=\cdots=\partial_n^{m-1}u=0\quad&\mbox{on }\dOmega,
\end{cases}
\end{equation}
where $\Omega$ is a bounded smooth domain, that is, of class $C^{2m,\gamma}$, for some $\gamma\in(0,1)$, and provided the same assumptions on $h$ are satisfied. Herein, by \textit{weak solution} of \eqref{polyDIRa} we mean a function $u\in H^m_0(\Omega)$ such that
$$\int_\Omega\nabla^mu\nabla^m\varphi=\intOmega h(x,u)\varphi,$$
for every $\varphi\in H^m_0(\Omega)$, with the convention
\begin{equation*}
\nabla^m :=\quad\left\{ \begin{array}{c l}\displaystyle
\Delta^{m/2},\quad&\mbox{$m$ odd,}\\
\nabla\Delta^{(m-1)/2},\quad&\mbox{$m$ even.}                            
\end{array}\right.
\end{equation*}
\begin{remark}
In the second-order case $m=1$ and when $\Omega$ is a ball, our result extends the analysis in \cite{dFLN,GS,BM,ChenLi}, enlarging the class of nonlinearities for which the uniform boundedness of solutions holds. Actually, one can show that an analogue result holds also for more general domains, see \cite{MIO_N}.
\end{remark}

\begin{thm}
	Let $\B$ be a ball in $\R^{2m}$ and $h$ satisfy (\textit{H1})-(\textit{H3}) and one of conditions (i) or (ii) of Theorem \ref{uniformboundblowupBALL}.
	Then there exists $C>0$ such that $\|u\|_{L^\infty(\B)}\leq C$ for all weak solutions of \eqref{polyDIRa}.
\end{thm}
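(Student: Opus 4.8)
The plan is to retrace, almost verbatim, the proof of Theorem~\ref{uniformboundblowupBALL} carried out in Sections~\ref{SectionNaPR}--\ref{SectionBlowup} for $m=2$, replacing each biharmonic ingredient by its polyharmonic analogue; since the scheme is identical, I only indicate the modifications. First I would reprove Proposition~\ref{NEWSection1} for $(-\Delta)^m$. The pointwise bounds on $G_\Omega$ and its derivatives of Lemma~\ref{Greenspropetry} hold in the same shape, $|G_\Omega(x,y)|\preceq\log\big(2+|x-y|^{-1}\big)$ and $|\nabla^iG_\Omega(x,y)|\preceq|x-y|^{-i}$, for the polyharmonic Dirichlet Green function on a $C^{2m,\gamma}$ domain (see \cite{Kraso,DAS,GR,GRholes}), while on the ball the two-sided estimate of Lemma~\ref{2sidedestimateball} becomes $G_\B(x,y)\simeq\log\big(1+d_\B(x)^md_\B(y)^m|x-y|^{-2m}\big)$ by \cite{GGS}. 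The polyharmonic Dirichlet problem on the ball is positivity preserving, $\varphitilde>0$ by Boggio's principle and $\varphitilde\simeq d_\B^m$; hence Lemma~\ref{lemmaDir1-B} goes through under hypothesis~(i) unchanged and under~(ii) via the same lower Green bound as in \cite{DST,CS}. The moving-planes argument of Lemma~\ref{MovingPlanes} uses only the sign inequalities \eqref{BGW_1}--\eqref{BGW_2} and the reflection inequalities \eqref{FGW_1}--\eqref{FGW_2} of the Green function of a ball, which remain valid for $(-\Delta)^m$ (\cite{BGW,FGW}); thus Lemmas~\ref{boundaryestDIR}--\ref{uniformLambda} hold and provide a neighbourhood $\omega$ of $\dB$ with $\|u\|_{L^\infty(\omega)}\le C_1$ and $\int_\B h(x,u)\le\Lambda$ for every weak solution $u$.

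I would then run the blow-up argument of Section~\ref{SectionBlowup}. Suppose by contradiction a sequence $(u_k)_k$ of solutions with $M_k=\|u_k\|_{L^\infty(\B)}=u_k(x_k)\nearrow+\infty$; by the previous step $d(x_k,\dB)\ge\eta>0$. Rescale with $\mu_k:=f(M_k)^{-1/(2m)}\to0$ and $v_k(x):=u_k(x_k+\mu_kx)-M_k$. The representation formula for $\nabla^iu_k$, $0\le i\le 2m-1$, with Lemma~\ref{Greenspropetry} and $f(u_k)\le f(M_k)$ near $x_k$, gives $|\nabla^iv_k|\le C(R)$ on $B_R(0)$ as in Lemma~\ref{grad_limitato}; the interior $W^{2m,p}$-estimate (Lemma~\ref{RWthm}, already stated for general $m$) then yields $v_k\to v$ in $C^{2m-1,\gamma}_{loc}(\R^{2m})$ with $v\le0=v(0)$, and a first-order Taylor expansion of $\log\circ f$ together with (A3) produces the limit equation $(-\Delta)^mv=a(x_\infty)e^{\beta v}$ in $\R^{2m}$ with $a(x_\infty)>0$. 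If $\beta=0$ this is $(-\Delta)^mv\equiv a(x_\infty)>0$, and integrating over $\R^{2m}$ against the $L^1$-bound for $h(\cdot,u_k)$ gives the contradiction $+\infty\le 2[\Lambda+d|\B|]$ exactly as in Section~\ref{SectionSubcritical}. If $\beta>0$ one first obtains $\int_{\R^{2m}}e^{\beta v}<+\infty$, then establishes the growth control $v(x)=o(|x|^2)$ as $|x|\to+\infty$ from the a-priori estimates for $\int_{B_R(0)}|\nabla^jv_k|$, $1\le j\le 2m-1$, furnished by the polyharmonic analogue of \cite[Lemma 4]{MP}, and finally invokes the classification of the finite-energy solutions of the higher-order Liouville equation $(-\Delta)^mw=(2m-1)!\,e^{2mw}$ in $\R^{2m}$ satisfying this growth (\cite{Lin} for $m=2$ and its higher-order generalizations) to identify $v$, up to scaling, with the standard bubble. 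This delivers the energy concentration $\displaystyle\lim_{R\to+\infty}\liminf_{k}\int_{B_{R\mu_k}(x_k)}h(x,u_k)\,dx\ge\theta$ with $\beta\theta=(2m)!\,|\mathbb S^{2m}|$, whereupon Lemma~\ref{integrability2} and Proposition~\ref{Sfinite}, being purely local, carry over and show that the blow-up set $S$ is finite and $(u_k)_k$ is locally bounded in $W^{2m-1,\infty}(\Omegabar\setminus S)$.

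For the final contradiction, pick $x_0\in S$ and $r$ small with $B_r(x_0)\cap S=\{x_0\}$, and decompose $u_k=\phi_k+w_k$ as in the proof of Theorem~\ref{uniformboundblowupBALL}, now with $\Delta^2$ replaced by $(-\Delta)^m$ and Dirichlet replaced by Navier boundary conditions: the polyharmonic Navier Green function $G_{NAV,B_r(x_0)}$ is positive (it is an $m$-fold iterate of the second-order Dirichlet Green function), its leading singularity is $c_m^{-1}\log|x-x_0|^{-1}$ with $c_m=2^{2m-1}\pi^m(m-1)!$ the constant of the fundamental solution of $(-\Delta)^m$ in $\R^{2m}$, and $\|w_k\|_\infty\le C$ by Proposition~\ref{Sfinite}. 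Since $h(\cdot,u_k)\rightharpoonup\mu\ge\theta\delta_{x_0}$, one gets $\phi\ge\theta G_{NAV,B_r(x_0)}(\cdot,x_0)$, and therefore, for $\varepsilon\in(0,\beta)$ small,
\begin{equation*}
\int_{B_r(x_0)}e^{(\beta-\varepsilon)\phi}\,dx\ \geq\ C\int_{B_r(x_0)}|x-x_0|^{-(\beta-\varepsilon)\theta/c_m}\,dx\ =\ +\infty,
\end{equation*}
because $\beta\theta/c_m=4m>2m$ (a short computation using $\beta\theta=(2m)!|\mathbb S^{2m}|$ and the value of $c_m$, generalising the identity $64\pi^2/8\pi^2=8$ of the case $m=2$); on the other hand, using $u_k=\phi_k+w_k$, the boundedness of $w_k$, Lemma~\ref{fexpbeta_lem}, \eqref{asymptotic_GROWTH} and the energy control one gets $\int_{B_r(x_0)}e^{(\beta-\varepsilon)\phi}\,dx\le C(\Lambda)$. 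This contradiction finishes the proof.

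The blow-up bookkeeping above is essentially dimensional and presents no new difficulty; the two genuinely new points, both tied to the polyharmonic setting, are verifying that the sign and reflection properties \eqref{BGW_1}--\eqref{FGW_2} of the ball's Green function persist for $(-\Delta)^m$ (this is the structural reason why the statement must be confined to the ball), and establishing the growth bound $v(x)=o(|x|^2)$ needed to apply the higher-order Liouville classification---for $m\ge3$ one has several intermediate operators $\Delta^jv_k$ to control and the argument is more delicate than the $m=2$ one, although it still rests on the a-priori $L^1$ bounds of the polyharmonic analogue of Lemma~\ref{MPlemma}.
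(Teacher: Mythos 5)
Your proposal follows essentially the same route as the paper: the same boundary/energy estimates via the polyharmonic two-sided Green bound and the moving-planes sign properties, the same rescaling $\mu_k=f(M_k)^{-1/(2m)}$ leading to $(-\Delta)^m v=a(x_\infty)e^{\beta v}$, the classification of the limit profile (Martinazzi's higher-order generalisation of Lin), and the same final contradiction with $\beta\theta/\gamma_m=4m>2m$; your constant $c_m=2^{2m-1}\pi^m(m-1)!$ agrees with the paper's $\gamma_m=|\mathbb S^{2m-1}|2^{2m-2}[(m-1)!]^2$. The argument and all key constants check out, so this is correct and matches the paper's proof.
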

\begin{proof}
	An analogue of Proposition \ref{NEWSection1} in dimension $2m$ holds, following the same arguments of Section \ref{SectionNaPR} with only evident changes. In particular, Lemma \ref{lemmaDir1-B}  holds by considering $\lambda_1^{(m)}$ and $\varphi_1^{(m)}$ as respectively the first eigenvalue and the first eigenfunction in $\Omega$ of the operator $(-\Delta)^m$ subjected to Dirichlet boundary conditions. Moreover, the moving-planes argument of Lemma \ref{MovingPlanes} still applies, since the Green function vanishes near the boundary precisely of order $m$, 
	namely (see \cite[Theorem 4.6]{GGS})
	\begin{equation*}
	G_{(-\Delta)^m,\B}(x,y)\simeq \log\bigg(1+\dfrac{d_\B(x)^md_\B(y)^m}{|x-y|^{2m}}\bigg)
	\end{equation*}
	and we have the same estimates of Lemmas \ref{BGW_lemma} and \ref{FGW_lemma}. Then, Lemmas \ref{boundaryestDIR} and \ref{uniformLambda} easily follow, since they rely on properties that do not depend on the differential operator. Let us focus now on the blow-up argument. A careful reading of Section \ref{SectionBlowup} may convince the reader that all statements are still valid for problem \eqref{polyDIRa} once we adapt the scaling as
	\begin{equation*}%
	\mu_k^{(m)}:=(f(M_k))^{-\frac1{2m}}.
	\end{equation*}
	In particular, Lemma \ref{grad_limitato} holds for any $i\in\{0,1,\cdots, 2m-1\}$. Hence, again by the local compactness guaranteed by Lemma \ref{RWthm}, we find $v\in C^{2m-1}(\R^{2m})$ satisfying 
	\begin{equation}\tag{*$_m$}\label{*m}
	(-\Delta)^mv=a(x_\infty)e^{\beta v}\quad\mbox{in}\;\R^{2m}.
	\end{equation} In the subcritical case, one has $\beta=0$, an the contradiction is found exactly as in Section \ref{SectionSubcritical}. On the other hand, in the critical framework $\beta\in(0,+\infty)$, the characterization of entire solutions of equation \eqref{*m} follows by the classification result of Martinazzi \cite[Theorem 2]{Mart}. The value of the constant $\theta$ in \eqref{stimaintegrale} is now $\theta=\frac{(2m)!}{\beta}|\mathbb S^{2m}|$. Therefore, as again the leading term of the Green function of the polyharmonic extension of the Navier problem \eqref{Green_Nav} is $\frac1{\gamma_m}\log\left(\frac1{|x-x_0|}\right)$ with $$\gamma_m=|\mathbb S^{2m-1}|2^{2m-2}[(m-1)!]^2$$
	(see for instance \cite[Proposition 22]{Mart}), taking into account that
	$$|\mathbb S^{2m-1}|=\frac{2\pi^m}{(2m-1)!}\qquad\mbox{and}\qquad|\mathbb S^{2m}|=\frac{2^{m+1}\pi^m}{(2m-1)!!},$$
	then one infers that the crucial exponent in the contradictory argument is $\frac{\beta\theta}{\gamma_m}=4m>2m$, so the proof can be concluded as for the biharmonic case.
\end{proof}

Finally, also the higher-order counterpart of Theorem \ref{uniformboundblowupOMEGA} can be established following the analysis in Section \ref{SectionDomains} with the appropriate adaptations mentioned above. We just recall that in this context the Poho\v{z}aev identity assumes the following form: for any $u\in C^{2m}(\Omegabar)$ solution of $(-\Delta)^mu=h(x,u)$ in $\Omega\subset\R^{2m}$ and for any $y\in\R^{2m}$, there holds:
	\begin{equation*}
	\begin{split}
		& 2m\int_\Omega H(x,u)dx+\int_\Omega\langle x-y,\nabla_xH(x,u)\rangle dx\\
		&=\int_\dOmega\langle x-y,n(x)\rangle H(x,u)d\sigma+\dfrac12\int_\dOmega(\nabla^mu)^2\langle x-y,n(x)\rangle d\sigma\\
		&+\sum_{j=0}^{m-1}(-1)^{m+j}\int_\dOmega\langle n(x),\nabla^j((x-y)\nabla u)\nabla^{2m-1-j}u(x)\rangle d\sigma.
	\end{split}
	\end{equation*}
	
\begin{thm}
	Let $\Omega\subset\R^{2m}$ be a bounded $C^{2m,\gamma}$ smooth domain, $\gamma\in(0,1)$ and $h$ be a nonlinearity satisfying assumptions (\textit{H1})-(\textit{H2}) with $0<a_0\leq a(\cdot)\in C(\Omegabar)$. Suppose further that one of the conditions \textit{(1)}-\textit{(2)} of Theorem \ref{uniformboundblowupOMEGA} is satisfied. Then, for any $\Lambda>0$ there exists $C>0$ depending on $\Lambda$ such that  $\|u\|_{L^\infty(\Omega)}\leq C$ for all solutions of \eqref{DIRf} such that $\int_\Omega h(x,u)dx\leq\Lambda$.
\end{thm}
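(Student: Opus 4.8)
The plan is to replay the proof of Theorem~\ref{uniformboundblowupOMEGA} from Section~\ref{SectionDomains}, inserting the polyharmonic modifications already recorded in the proof of the previous theorem. I would argue by contradiction, assuming there are solutions $(u_k)_k$ with $\int_\Omega h(x,u_k)\,dx\le\Lambda$ and $M_k:=\|u_k\|_{L^\infty(\Omega)}=u_k(x_k)\nearrow+\infty$. Using the polyharmonic scaling $\mu_k:=(f(M_k))^{-1/(2m)}$ and $v_k(x):=u_k(x_k+\mu_k x)-M_k$ on $\Omega_k:=(\Omega-x_k)/\mu_k$, the computation of Lemma~\ref{grad_limitato} --- now valid for $i\in\{0,1,\dots,2m-1\}$ via the dimension-$2m$ Green-function bounds of Lemma~\ref{Greenspropetry} --- combined with the local estimate of Lemma~\ref{RWthm} forces $\Omega_k\nearrow\R^{2m}$ (otherwise the boundary value $v_k=-M_k$ on $\partial\Omega_k$ contradicts the local gradient bound, exactly as in Section~\ref{SectionDomains}) and gives $v_k\to v$ in $C^{2m-1,\gamma}_{loc}(\R^{2m})$ with $v\le v(0)=0$ solving equation~\eqref{*m}, i.e. $(-\Delta)^m v=a(x_\infty)e^{\beta v}$ in $\R^{2m}$. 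The hypothesis $a\ge a_0>0$ up to $\dOmega$ is used precisely to guarantee $a(x_\infty)>0$, even when $x_\infty\in\dOmega$.

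In the subcritical case $\beta=0$ the contradiction is obtained verbatim from Section~\ref{SectionSubcritical}: $+\infty=\int_{\R^{2m}}a(x_\infty)\le 2(\Lambda+d|\Omega|)$. In the critical case $\beta>0$ I would first transcribe Lemmas~\ref{integrability}--\ref{integrability2} and Proposition~\ref{Sfinite} to dimension $2m$: the limit $v$ has finite energy, is classified by Martinazzi's result \cite{Mart}, the blow-up set $S$ is finite, $(u_k)_k$ is locally bounded in $W^{2m-1,\infty}$ off $S$, and the concentration estimate~\eqref{stimaintegrale} holds with $\theta=\frac{(2m)!}{\beta}|\mathbb S^{2m}|$. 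Interior blow-up points are excluded as at the end of Section~\ref{SectionBlowup}: decompose $u_k$ on $B_r(x_0)$ into the solution of the Navier problem with datum $h_k:=h(\cdot,u_k)$ plus a uniformly bounded remainder, use $h_k\rightharpoonup\mu\ge\theta\delta_{x_0}$, and note that the leading singularity of the polyharmonic Navier Green function is $\frac1{\gamma_m}\log\frac1{|x-x_0|}$ with $\gamma_m=|\mathbb S^{2m-1}|\,2^{2m-2}[(m-1)!]^2$; the resulting exponent $\frac{\beta\theta}{\gamma_m}=4m>2m$ makes $\int_{B_r(x_0)}e^{(\beta-\varepsilon)\phi}=+\infty$ for $\varepsilon$ small, contradicting the $L^1$ energy bound of Proposition~\ref{NEWSection1}.

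It remains to exclude boundary blow-up, $x_0\in S\cap\dOmega$, which is where the polyharmonic Poho\v{z}aev identity displayed just above the theorem enters. Following Section~\ref{SectionDomains}, I would establish the polyharmonic analogues of Lemmas~\ref{lemma_limits}--\ref{LemmabehaviourU} for the global limit $U$ (solving $(-\Delta)^m U=r$ in $\Omega$ with the $m$ Dirichlet conditions, and satisfying near each $p\in S$ the decay bounds~\eqref{estU}--\eqref{estnabla^iU} for $\nabla^iU$, $i\le 2m-1$), then apply the identity on $\Omega\cap B_r(x_0)$ to $u_k$. By the $m$ boundary conditions on $\dOmega$, all boundary integrals on $\dOmega\cap B_r(x_0)$ carrying derivatives of $u_k$ vanish except the one weighted by $(\nabla^m u_k)^2\langle x-y,n(x)\rangle$; choosing $y_k:=x_0+\rho_{k,r}\,n(x_0)$ with $\rho_{k,r}$ the analogous ratio kills that term too while keeping $|\rho_{k,r}|\le 2r$. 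On the left-hand side, \eqref{hopital}, assumption~\eqref{hpnablaH}, and the elementary inequality $F(t)\ge\frac1m(f(t)-f(0))$ --- with $m:=\max\{1,\sup_{t\ge0}f'(t)/f(t)\}<+\infty$ by (\textit{A1}) and (\textit{A3}) --- together with~\eqref{stimaintegrale} produce a strictly positive lower bound of the form $c(a(x_0),\|a\|_\infty,m)\,\theta+o(r)$; on the right-hand side, Lemmas~\ref{lemma_limits}--\ref{LemmabehaviourU} and the bound $H(x,t)\le C(e^{(\beta+1)t}+1)$ (from integrating~\eqref{fexpbeta} and \eqref{hopital}) control every remaining term on $\Omega\cap\partial B_r(x_0)$ by $C\varepsilon^2+o(1)$, provided $\varepsilon<\frac{2m}{\beta+1}$ so that the weight $(2+1/r)^{\varepsilon(\beta+1)}$ on $\partial B_r(x_0)$ is beaten by the factor $r$. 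Letting $r\to0$ and then $\varepsilon\to0$ contradicts the lower bound and closes the proof.

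I expect the boundary-blow-up step to be the main obstacle: one has to carry out the Poho\v{z}aev bookkeeping with the full set of $m$ polyharmonic boundary conditions and the correct weights in the identity above, establish the decay estimates for $U$ near $S$ in this generality, and check that the shifted exponent $\frac{\beta\theta}{\gamma_m}=4m$ still strictly exceeds the dimension $2m$ --- which is exactly what makes the competition between the concentrated energy (bounded below) and the vanishing boundary terms (controlled by $e^{(\beta+1)U}\sim|x-x_0|^{-\varepsilon(\beta+1)}$) work. Everything else is a faithful, if tedious, transcription of Sections~\ref{SectionNaPR}--\ref{SectionDomains} with $\Delta^2$ replaced by $(-\Delta)^m$ and the dimension $4$ by $2m$.
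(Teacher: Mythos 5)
Your proposal is correct and follows essentially the same route as the paper, which itself proves this theorem only by indicating that the analysis of Section \ref{SectionDomains} carries over with the polyharmonic adaptations (scaling $\mu_k=(f(M_k))^{-1/(2m)}$, Martinazzi's classification, $\theta=\frac{(2m)!}{\beta}|\mathbb S^{2m}|$, the exponent $\frac{\beta\theta}{\gamma_m}=4m>2m$, and the polyharmonic Poho\v{z}aev identity); your spelled-out version of those adaptations, including the threshold $\varepsilon<\frac{2m}{\beta+1}$ for the boundary term, matches what the paper intends.
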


\subsection{The Navier boundary conditions}

So far, we studied problems endowed with Dirichlet boundary conditions. In this section, we show that a similar analysis can be also given when considering \textit{Navier boundary conditions}:
\begin{equation}\label{NAVf}
\begin{cases}
\Delta^2u=h(x,u)\quad&\mbox{in }\Omega,\\%
u=\Delta u=0\quad&\mbox{on }\dOmega.
\end{cases}
\end{equation}

\noindent Here we assume $\Omega\subset\R^4$ is a bounded \textit{convex} $C^{2,\gamma}$ domain and $h:\Omega\times\R\to\R^+$ satisfies (\textit{H1})-(\textit{H2}). Recall that $u\in H^2(\Omega)\cap\Ho$ is a \textit{weak solution} of \eqref{NAVf} whenever \eqref{def} holds for any test function $\varphi\in H^2(\Omega)\cap\Ho$.
\vskip0.2truecm
The proof of the analogue of Theorem \ref{uniformboundblowupBALL} for the Navier problem \eqref{NAVf} (the forthcoming Theorem \ref{NAV_uniformbound}), essentially follows the arguments contained in Sections \ref{SectionNaPR} and \ref{SectionBlowup}. We mention here only the remarkable modifications.\\
First, \eqref{NAVf} can be split in a system of second-order Dirichlet problems, so the maximum principle holds. The starting local $L^1$ estimate \eqref{localL1bdd} may be obtained retracing the proof of Lemma \ref{lemmaDir1-B}, where the Green function estimates therein are replaced by the analogues for the Navier boundary conditions (see \cite[Proposition 4.13]{GGS}). Then, the argument used to prevent boundary blow-up still follows by a moving-planes technique as in \cite[Lemma 3.2]{dFdOR}, provided again some decaying conditions near $\dOmega$ are satisfied by the nonlinearity. Here, the Green function estimates needed in the Dirichlet case are replaced by the maximum principle for elliptic systems in small domains, see \cite{dF_monot}. The last significant element in order to retrace the blow-up method in the interior of the domain, is the counterpart of Lemma \ref{Greenspropetry}, in particular the estimates of the derivatives of the Green function $G_{NAV}$ of the biharmonic operator with Navier boundary conditions. We postpone this subject to Appendix A.

This enables us to retrace all the arguments of Section \ref{SectionBlowup} and, as a consequence, we are lead to the following:
\begin{thm}\label{NAV_uniformbound}
	Let $\Omega\subset\R^4$ be a bounded convex $C^{2,\gamma}$ domain and $h$ satisfy assumptions (\textit{H1})-(\textit{H2}).
	Furthermore, assume there exist $\bar r,\bar\delta>0$ such that  
	\begin{enumerate}[H3a\textquotesingle)]
		\item 
		$h(x,\cdot)\in C^1(\Omega_{\bar r})$  with $\frac{\partial h}{\partial t}(x,t)\geq 0$ in $\Omega_{\bar r}\times\R^+$
	\end{enumerate}
	and (H3b) holds. Then, there exists $C>0$ such that $\|u\|_{L^\infty(\Omega)}\leq C$ for all weak solutions of \eqref{NAVf}. 
\end{thm}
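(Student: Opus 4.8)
The strategy is to reduce \eqref{NAVf} to a cooperative second-order system and then re-run the scheme of Sections~\ref{SectionNaPR}--\ref{SectionBlowup}. Writing $w:=-\Delta u$, a weak solution $u$ of \eqref{NAVf} corresponds to the pair $(u,w)\in(\Ho)^2$ solving $-\Delta u=w$ and $-\Delta w=h(x,u)$ with homogeneous Dirichlet conditions; since $h\ge 0$ the maximum principle gives $u,w>0$, and testing the two equations against the first Dirichlet eigenfunction $\varphi_1$ of $-\Delta$ in $\Omega$ yields $\lambda_1^2\int_\Omega u\varphi_1=\int_\Omega h(x,u)\varphi_1$. As in Lemma~\ref{lemmaDir1-B}, the superlinearity of $f$ together with \eqref{asymptotic_GROWTH} gives $\int_\Omega h(x,u)\varphi_1\le C$ under hypothesis (i); under (ii) one argues as in the second half of that lemma, using the representation formula with the Navier Green function and its pointwise lower bound (\cite[Proposition~4.13]{GGS}), available since $\Omega$ is convex and of class $C^{2,\gamma}$. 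This furnishes the local energy estimate.

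To prevent blow-up at $\dOmega$ I would run a moving-planes argument directly on the cooperative system $(u,w)$, in the spirit of \cite[Lemma~3.2]{dFdOR}: convexity of $\Omega$ permits reflections across hyperplanes near the boundary, the monotonicity hypotheses (H3a$'$)--(H3b) make the reflected system ordered, and the maximum principle for elliptic systems on domains of small measure (\cite{dF_monot}) replaces the sharp Green-function monotonicity identities (Lemmas~\ref{BGW_lemma}--\ref{FGW_lemma}) used in the Dirichlet/ball case. This produces a uniform neighborhood $\omega_r$ of $\dOmega$ on which $\nabla u\cdot\theta\le 0$ for unit vectors $\theta$ close to $n(x)$, exactly as in Lemma~\ref{MovingPlanes}. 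Combined with the local $L^1$ bound, the arguments of Lemmas~\ref{boundaryestDIR}--\ref{uniformLambda} then give a neighborhood $\omega$ of $\dOmega$ with $\|u\|_{L^\infty(\omega)}\le C_1$ and $\int_\Omega h(x,u)\,dx\le\Lambda$, i.e.\ the analogue of Proposition~\ref{NEWSection1}.

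Granting the pointwise estimates $|G_{NAV}(x,y)|\le C\log(2+|x-y|^{-1})$ and $|\nabla^i G_{NAV}(x,y)|\le C|x-y|^{-i}$ for $i=1,2,3$ — proved in Appendix~A by differentiating the iterated representation $u=\int G_{-\Delta}\bigl(\int G_{-\Delta}\,h\bigr)$ and invoking the classical estimates for the Green function of $-\Delta$ on a $C^{2,\gamma}$ domain — the interior blow-up argument of Section~\ref{SectionBlowup} transfers verbatim. Rescaling around a putative maximizing sequence $x_k$ with $\mu_k=f(M_k)^{-1/4}$ gives a limit $v$ solving $\Delta^2 v=a(x_\infty)e^{\beta v}$ in $\R^4$ with $a(x_\infty)\ge a_0>0$; the subcritical case $\beta=0$ contradicts $\Lambda<\infty$ as in Section~\ref{SectionSubcritical}, while the critical case $\beta>0$ is handled through Lin's classification (Lemma~\ref{charact}), pinning down $v$ and the energy quantum $\theta$ with $\theta\beta=64\pi^2$. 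Since convexity of $\Omega$ and the boundary estimate already force the blow-up set $S$ to be a finite subset of $\Omega$, one localizes near an \emph{interior} point $x_0$ and reaches the contradiction exactly as in the proof of Theorem~\ref{uniformboundblowupBALL}, bounding $\int_{B_r(x_0)}e^{(\beta-\varepsilon)\phi}$ from below via $\phi\ge\theta\,G_{NAV,B_r(x_0)}(\cdot,x_0)$ and its $\tfrac1{8\pi^2}\log$ singularity, against the upper bound provided by $h_k\in L^1$.

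The main obstacle is the Green-function analysis relegated to Appendix~A: unlike on the ball, where Boggio's explicit formula is at hand, on a general convex $C^{2,\gamma}$ domain one must control the derivatives of $G_{NAV}$ only through the decoupled system, carefully tracking the regularity of the intermediate solution $w$. A secondary delicate point is to verify that the system moving-planes argument genuinely needs only convexity together with (H3a$'$)--(H3b) and the small-domain maximum principle, so that the procedure can be started near $\dOmega$ and slid inward without the sharp two-sided estimate \eqref{2sided}, which is unavailable outside the ball.
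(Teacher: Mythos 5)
Your proposal is correct and follows essentially the same route as the paper: decoupling \eqref{NAVf} into a cooperative second-order Dirichlet system to recover the maximum principle and the local $L^1$ estimate of Lemma \ref{lemmaDir1-B}, running the moving-planes argument for systems via \cite{dFdOR} and the small-domain maximum principle of \cite{dF_monot} to prevent boundary blow-up, and then transferring the interior blow-up analysis of Section \ref{SectionBlowup} using the Navier Green function estimates of Appendix A. The only caveat (shared with the paper, whose statement of Theorem \ref{NAV_uniformbound} omits it) is that the local energy estimate still requires one of the hypotheses (\textit{i})--(\textit{ii}) of Theorem \ref{uniformboundblowupBALL}, which you correctly invoke.
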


We point out that Theorem \ref{NAV_uniformbound} extends a previous result \cite[Corollary 2.3]{LinWei} established for problem \eqref{NAVf} for the special case of $h(x,u)=e^u$.

\section{Existence results}\label{SectionEXISTENCE}

The a-priori bound for solutions of \eqref{DIRf} obtained in the previous sections is essential to apply topological methods and infer the existence of positive solutions. We follow the standard approach which has been widely applied in the literature (see for instance \cite{dFLN,Soranzo,dFdOR2}), and which relies on a well-known result firstly due to Krasnosel'skii, which may be equivalently stated as follows (see \cite[Theorem 3.1]{dF} and the subsequent results).
\begin{lem}\label{Krasno}
	Let $X$ be a Banach space and $K\subset X$ a cone, which induces a partial order in $X$ defined as $x\leq y$ if and only if $y-x\in K$. Moreover, let $\Phi :K\to K$ be a compact map with $\Phi (0)=0$ and suppose there exist $0<r<R$ and $\tau>0$ such that:
	\begin{enumerate}
		\item there exists a bounded linear operator $A:X\to X$ such that $A(K)\subset K$ with spectral radius $r(A)<1$ and such that $\Phi(x)\leq Ax$ for all $x\in K$ with $\|x\|=r;$
		\item there exists $\Psi:K\times[0,+\infty)\to K$ such that
		\begin{enumerate}
			\item $\Psi(x,0)=\Phi(x)$;
			\item $\Psi(x,t)\neq x$ for all $t\geq 0$ and $\|x\|=R$;
			\item $\Psi(x,t)\neq x$ for all $t\geq\tau$ and $\|x\|\leq R$.
		\end{enumerate}
	\end{enumerate}
	Then $\Phi$ has a fixed point $\overline x\in K$ with $r<\|\overline x\|<R$.
\end{lem}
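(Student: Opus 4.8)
The plan is to run a degree-theoretic argument: I would compute the fixed point index $i(\Phi,\cdot,K)$ (equivalently, the Leray--Schauder degree relative to the cone $K$) of $\Phi$ on the two relatively open balls $K_\rho:=\{x\in K:\|x\|<\rho\}$, for $\rho=r$ and $\rho=R$, and then produce a fixed point in the annular region $K_R\setminus\overline{K_r}$ by additivity. Throughout I rely on the standard properties of the fixed point index for compact maps on a closed convex cone: normalization, invariance under compact admissible homotopies, excision/additivity, and the solution property (a nonzero index forces a fixed point in the set). I also use, as is implicit in the statement, that $\Psi$ is compact on bounded subsets of $K\times[0,+\infty)$.

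First I would show $i(\Phi,K_r,K)=1$; this is where assumption (1) and $r(A)<1$ come in. The claim is that $x\neq t\Phi(x)$ for all $x\in\partial K_r$ and all $t\in[0,1]$. If instead $x=t\Phi(x)$ with $\|x\|=r$ and $0\le t\le1$, then, since $x\in K$, assumption (1) gives $\Phi(x)\le Ax$, hence $x=t\Phi(x)\le tAx\le Ax$, using $A(K)\subset K$ and $(1-t)Ax\in K$. Applying $A$ (which maps $K$ into $K$, hence preserves the order) repeatedly to $x\le Ax$ and telescoping the resulting inequalities yields $A^nx-x\in K$ for every $n$; since $r(A)<1$ we have $\|A^n\|\to0$, so $A^nx\to0$, and closedness of $K$ forces $-x\in K$. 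As also $x\in K$ and $K\cap(-K)=\{0\}$, we conclude $x=0$, contradicting $\|x\|=r>0$. Therefore $t\mapsto t\Phi(\cdot)$ is an admissible homotopy on $\partial K_r$, and homotopy invariance together with normalization give $i(\Phi,K_r,K)=i(0,K_r,K)=1$.

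Next I would show $i(\Phi,K_R,K)=0$, using $\Psi$. By (2b), $\Psi(x,t)\neq x$ for all $x\in\partial K_R$ and all $t\ge0$, so $t\mapsto\Psi(\cdot,t)$, $t\in[0,\tau]$, is an admissible compact homotopy on $\partial K_R$; combined with (2a), invariance gives $i(\Phi,K_R,K)=i(\Psi(\cdot,\tau),K_R,K)$. But by (2c) with $t=\tau$, the map $\Psi(\cdot,\tau)$ has no fixed point anywhere in $\overline{K_R}$, so by the solution property $i(\Psi(\cdot,\tau),K_R,K)=0$. Finally, since $\Phi$ has no fixed point on $\partial K_r$ (the case $t=1$ above) nor on $\partial K_R$ (by (2a)--(2b) at $t=0$), additivity yields
\[
i(\Phi,K_R\setminus\overline{K_r},K)=i(\Phi,K_R,K)-i(\Phi,K_r,K)=0-1=-1\neq0,
\]
and the solution property delivers a fixed point $\overline x\in K$ with $r<\|\overline x\|<R$.

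The step I expect to require the most care is the proof that $i(\Phi,K_r,K)=1$: the chain $x\le t\Phi(x)\le Ax$ must be combined correctly with $r(A)<1$, the order-preserving property $A(K)\subset K$, and the closedness of $K$ in order to squeeze out $x=0$; the argument also quietly uses that $K$ is a genuine cone, i.e. $K\cap(-K)=\{0\}$, which is precisely what makes the induced relation a partial order. A secondary, easily overlooked point is checking that $\Psi$ is a compact admissible homotopy so that the cone index applies on the parameter interval; the fact that (2c) is only required for $t\ge\tau$ is exactly what keeps the relevant homotopy parameter in a compact interval $[0,\tau]$.
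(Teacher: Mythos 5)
Your argument is correct: the two index computations ($i(\Phi,K_r,K)=1$ via the admissible homotopy $t\Phi$ and the iteration $x\leq Ax\leq\cdots\leq A^nx\to0$, and $i(\Phi,K_R,K)=0$ via the homotopy $\Psi(\cdot,t)$ on $[0,\tau]$ combined with (2c)) together with additivity give the fixed point in the open annulus, and you correctly flag the two hypotheses that must be read into the statement (that $K$ is closed with $K\cap(-K)=\{0\}$, and that $\Psi$ is compact). The paper does not prove this lemma at all --- it quotes it as a known result of Krasnosel'skii type, citing \cite[Theorem 3.1]{dF} --- and your proof is precisely the standard fixed-point-index argument given in that reference.
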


\begin{proof}[Proof of Theorem \ref{Existence}]
	Let $X=C(\overline\B)$ and $K:=\{f\in C(\overline\B)\,|\,f\geq 0\}$ be the closed cone of nonnegative functions, which induces the standard pointwise order on $C(\overline\B)$. Moreover, let $T:=(\Delta^2)^{-1}$ be the inverse of the bilaplace operator with Dirichlet boundary conditions, that is, $Tg=w$ if and only if $w$ solves
	\begin{equation*}
	\begin{cases}
	\Delta^2w=g\quad&\mbox{in } \B,\\
	w=w_n=0\quad&\mbox{on }\dB.
	\end{cases}
	\end{equation*}
	Then $T:C(\overline\B)\to C(\overline\B)$ is a linear compact and positive operator. Defining now $\Phi:=T\circ h(x,\cdot)$, then $\Phi(K)\subseteq K$ by maximum principle and $\Phi$ is a bounded compact operator, by the continuity of $h$. By \eqref{HP}, there exist $\alpha\in (0,\tilde\lambda_1)$ and $t_0>0$ such that $h(x,t)\leq\alpha t$ for any $x\in\B$, $t\in(0,t_0)$. Hence, defining $A:=\alpha T$, then:
	$$\Phi(u)=T(h(x,u))\leq T(\alpha u)=A(u),$$
	for any $u\in C(\overline\B)$ with $\|u\|_{C(\overline\B)}\le t_0$.
	Moreover $A(0)=0$ by definition, $A(K)\subset K$ and
	$$r(A)=\alpha\max\{\lambda\,|\,\lambda\mbox{ is an eigenvalue of }A^{-1}\}=\frac{\alpha}{\tilde\lambda_1}<1,$$
	so condition \textit{1} of Lemma \ref{Krasno} is verified.\\
	\indent Let us now define $\Psi(u,t):=T(h(x,u+t))$, for any $t\geq0$. It is clear that $\Psi(u,0)=\Phi(u)$. We are thus led to study the following family of problems:
	\begin{equation}\tag{P$_t$}\label{Pt}
	\begin{cases}
	\Delta^2u=h(x,u+t)\quad&\mbox{in }\B,\\
	u=u_n=0\quad&\mbox{on }\dB.
	\end{cases}
	\end{equation}
	With the same steps as in the proof of Lemma \ref{lemmaDir1-B}, we can get
	\begin{equation}\label{lemma1Dirindipt}
	\int_\B h(x,u+t)\tilde\varphi_1 dx\leq C,
	\end{equation}
	where the constant $C$ is independent of $t$; this in turns implies
	\begin{equation*}
	\int_\B a(x)(u+t)\tilde\varphi_1 dx\leq C.
	\end{equation*}
	Since $u$, $a$ and $\tilde\varphi_1$ are positive, we necessarily find that $t$ is bounded. This means that there are no (positive) solutions of problem \eqref{Pt} when $t>\tilde T$ for some $\tilde T>0$ depending on $\Omega$ and $h$, so condition (\textit{2c}) is fulfilled. Hence, we can restrict to $t\in[0,\tilde T]$ and prove the uniform a-priori bound for these values of the parameter $t$.
	In fact, in view of \eqref{lemma1Dirindipt}, firstly one can repeat the same steps of the proofs in Sections \ref{SectionNaPR} and produce the same uniform estimates of Proposition \ref{NEWSection1}. In particular, we get
	\begin{equation*}
	\int_\B h(x,u+t)dx\leq \Lambda(h,\tilde T),
	\end{equation*}
	uniformly with respect to $t\in[0,\tilde T]$. This is sufficient to guarantee that the contradictory argument of Section \ref{SectionBlowup} can be retraced for solutions of problem \eqref{Pt} with only minor adaptations, finding an a-priori bound which depends only on $h$ and $\tilde T$. Rephrased, this means that there are no solutions of problem \eqref{Pt} for any $t\in[0,\tilde T]$ with $\|u\|\geq R$ for some $R=R(\tilde T)>0$ (and thus also for any $t\geq 0$). Hence, condition (\textit{2b}) is verified.\\
	\noindent Since all assumptions of Lemma \ref{Krasno} are then ensured, the existence of a positive solution of problem \eqref{DIRf} follows. Regularity of such solutions is given by standard elliptic arguments.
\end{proof}

\begin{remark}
	All the arguments used in the proof of Theorem \ref{Existence} apply also for the polyharmonic context and a higher-order analogue of Theorem \ref{Existence} holds.
\end{remark}

\begin{remark}
	Notice that the assumption \eqref{HP} which was imposed to obtain condition \textit{1} of Lemma \ref{Krasno}, is also necessary in same cases. Indeed, if one considers $h(x,s)=\lambda sf(s)$, with $f(s)\geq 1$ for any $s\geq0$ (for instance, $f(s)=e^{s^\alpha}$ with $\alpha\in(0,1]$), then a simple calculation shows that if $\lambda>\tilde\lambda_1$, we have no positive solutions for problem \eqref{DIRf} in any domain. Indeed, by integration by parts, one gets
	$$\tilde\lambda_1\int_\Omega u\tilde\varphi_1=\int_\Omega\Delta u\Delta\tilde\varphi_1=\lambda\int_\Omega uf(u)\tilde\varphi_1\geq\lambda\int_\Omega u\tilde\varphi_1,$$
	which implies $\lambda\leq\tilde\lambda_1$.
	
\end{remark}

\section{A counterexample}\label{Sezcounterex}
	In this section we provide an example of a subcritical nonlinearity, in the sense of the Adams inequality \cite{Adams}, which does not satisfy assumption (\textit{A3}) and for which there exists an unbounded solution of the problem \eqref{DIRf}.\\
	For the second-order case, a counterexample may be found in \cite[\S II.2, Example 2]{BM}. However, its straightforward modification to our context does not work, since two main difficulties appear: first, boundary conditions are not all satisfied; secondly, the potential $a(\cdot)$ has to be positive, unlike there. Here we show how to overcome them in the fourth-order case. First, we correct the boundary values with the help of a biharmonic function; this will also need an intermediate growth between $log$ and $log\,log$ in the definition of our function in order to compensate some additional terms appearing in its behaviour near 0. Then, a detailed analysis is provided to show the positivity of $a$ in a whole ball around 0.
	\vskip0.2truecm
	In the following we denote  $l(r):=\log \frac{1}{r}$. Let us fix  $\alpha\in (1,2)$ and consider the family of functions 
	\[
	f_\beta(t)= t+ \beta t^{\gamma}+ \delta \log t,
	\]
	where $\beta>0$ is a positive parameter that will be chosen later, $\gamma = \frac{\alpha-1}{\alpha}\in (0,1)$ and $\delta = \frac{1}{4\alpha} - \frac{1}{2}<0$. Note that $f_\beta(t) \ge t+ \delta \log t $, hence there exists $\rho_0>0$ such that $f_\beta(l(r))>0$ for any  $r\in [0,\rho_0]$ and $\beta\ge 0$. Then, for any $x\in B_{\rho_0}(0)$ we define 
	$$
	u_\beta(r):= f_\beta(l(r))^\frac{1}{\alpha}, \quad r = |x|.
	$$
	\begin{lem}\label{Bilaplacian}
		For any choice of $\alpha$ and $\beta$, we have 
		\[\begin{split} 
		\Delta^2 u_\beta & = \frac{1}{r^4} \sum_{i=1}^4 i! \binom{\frac{1}{\alpha}}{i} u_\beta(r)^{1-i \alpha} h_i(r),
		\end{split}
		\]
		where 
		\[
		\begin{split}
		h_1(r)&=f''''_\beta(l(r))-4 f_\beta ''(l(r)), \\
		h_2(r)&= 4f_\beta '''(l(r))f_\beta'(l(r))-4 f_\beta'(l(r))^2 + 3 f_\beta''(l(r))^2, \\
		h_3(r)&=6 f_\beta '(l(r))^2 f_\beta''(l(r)), \\
		h_4(4)&=f_\beta '(l(r))^4.
		\end{split}
		\]
	\end{lem}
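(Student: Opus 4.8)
The statement is a pointwise identity on $B_{\rho_0}(0)\setminus\{0\}$, and I would prove it by a direct computation organised around the action of the Laplacian on radial functions of $l(r)=\log\frac1r$. The starting point is the observation that, since on $\R^4$ one has $\Delta=\partial_r^2+\frac3r\partial_r$ for radial functions and since $l'(r)=-\frac1r$, $l''(r)=\frac1{r^2}$, for any smooth $F$ there holds
\[
\Delta\bigl[F(l(r))\bigr]=\frac1{r^2}\bigl(F''(l(r))-2F'(l(r))\bigr).
\]
This is the only step where the dimension enters: the coefficient $-2$ is $2-N$ with $N=4$.

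Next I would iterate. Writing $\frac1{r^2}=e^{2l(r)}$, the function $\Delta[F(l(r))]$ is again of the form $G(l(r))$ with $G(t)=e^{2t}\bigl(F''(t)-2F'(t)\bigr)$, so applying the identity once more gives $\Delta^2[F(l(r))]=\frac1{r^2}(G''-2G')(l(r))$. Expanding $G'$ and $G''$ by the product rule, the exponential prefactor combines to $e^{4t}=\frac1{r^4}$ at $t=l(r)$ and the remaining bracket simplifies, via the elementary identity $(F''-2F')''+2(F''-2F')'=F''''-4F''$, to
\[
\Delta^2\bigl[F(l(r))\bigr]=\frac1{r^4}\bigl(F''''(l(r))-4F''(l(r))\bigr).
\]
Taking $F(t)=f_\beta(t)^{1/\alpha}$, so that $F(l(r))=u_\beta(r)$, reduces the lemma to expanding $F''''-4F''$ in terms of $f_\beta$ and its derivatives.

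Finally I would differentiate $F=f_\beta^{1/\alpha}$ four times using the repeated product rule (equivalently, Fa\`a di Bruno's formula), and collect the outcome by powers of $f_\beta$: the coefficient of $f_\beta^{\,\frac1\alpha-i}$, for $i=1,\dots,4$, is a polynomial in $f_\beta',f_\beta'',f_\beta''',f_\beta''''$ multiplied by $\frac1\alpha\bigl(\frac1\alpha-1\bigr)\cdots\bigl(\frac1\alpha-i+1\bigr)=i!\binom{1/\alpha}{i}$. Subtracting $4F''=4\bigl[\tfrac1\alpha(\tfrac1\alpha-1)f_\beta^{\,\frac1\alpha-2}(f_\beta')^2+\tfrac1\alpha f_\beta^{\,\frac1\alpha-1}f_\beta''\bigr]$ affects only the $i=1$ and $i=2$ coefficients, producing the terms $-4f_\beta''$ in $h_1$ and $-4(f_\beta')^2$ in $h_2$. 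Using $f_\beta(l(r))^{\frac1\alpha-i}=u_\beta(r)^{1-i\alpha}$ one then reads off exactly the claimed expressions for $h_1,\dots,h_4$. The only genuinely laborious point is the fourth derivative of $F$; no cancellation subtler than the telescoping of exponentials in the previous step occurs, so the computation, while lengthy, is entirely mechanical.
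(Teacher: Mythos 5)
Your computation is correct: the iterated identity $\Delta^2[F(l(r))]=\tfrac{1}{r^4}\bigl(F''''(l(r))-4F''(l(r))\bigr)$ checks out in dimension $4$, and the Fa\`a di Bruno expansion of $F=f_\beta^{1/\alpha}$ reproduces exactly the stated $h_i$ with the falling-factorial coefficients $i!\binom{1/\alpha}{i}$. This is precisely the route the paper intends (its proof is the single sentence that the claim follows from the radial representation of the Laplacian), so your proposal simply supplies the details the authors omit.
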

	\begin{proof}
		The proof is straightforward using the radial representation of the laplacian.
	\end{proof}

	\begin{lem}\label{Lemma2}
		There exists $\rho_1\in (0,\rho_0)$ such that for any $\rho \in (0,\rho_1)$ we can find  $A(\rho)<0$, $B(\rho)>0$, $\beta(\rho)>0$ such that
		\begin{equation}\label{parameters}
		\begin{cases}
		A(\rho)+B(\rho)\rho^2 = -u_{\beta(\rho)}(\rho),\\
		2B(\rho) \rho =-u_{\beta(\rho)}'(\rho),\\
		\beta(\rho)=-\alpha A(\rho).
		\end{cases}
		\end{equation}
		Moreover, there exist $C>0$ such that $\frac{1}{C}l(\rho)^\frac{1}{\alpha}<\beta(\rho)\le C l(\rho)^\frac{1}{\alpha}$ for any $\rho \in (0,\rho_1)$.
	\end{lem}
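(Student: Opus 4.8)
The plan is to collapse the $3\times3$ system \eqref{parameters} into a single scalar equation for $\beta$. The last two lines of \eqref{parameters} determine the remaining unknowns explicitly in terms of $\beta$ and $\rho$, namely $A(\rho)=-\beta(\rho)/\alpha$ and $B(\rho)=-u_{\beta(\rho)}'(\rho)/(2\rho)$. Since $l'(r)=-1/r$, one computes $\rho\,u_\beta'(\rho)=-\tfrac1\alpha f_\beta(l(\rho))^{\frac1\alpha-1}f_\beta'(l(\rho))$, so inserting these expressions into the first line of \eqref{parameters} shows the system to be equivalent to the scalar fixed-point equation
\begin{equation*}
\frac\beta\alpha=f_\beta(l(\rho))^{\frac1\alpha}+\frac1{2\alpha}\,f_\beta(l(\rho))^{\frac1\alpha-1}f_\beta'(l(\rho))=:G(\beta,\rho),
\end{equation*}
together with the single sign requirement $\beta>0$: this forces $A(\rho)<0$, and it also gives $B(\rho)>0$ because $f_\beta'(l(\rho))>0$ once $\rho$ is small (see the rescaling below), whence $u_\beta'(\rho)<0$. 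Thus it is enough to produce, for every small $\rho$, a positive solution $\beta(\rho)$ of $\beta=\alpha G(\beta,\rho)$ satisfying $\beta(\rho)\simeq l(\rho)^{1/\alpha}$.

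First I would rescale. Put $s:=l(\rho)$, so $s\to+\infty$ as $\rho\to0^+$, and $b:=\beta\,s^{-1/\alpha}$, so that the desired asymptotics is precisely that $b$ stays in a compact subset of $(0,+\infty)$. Using $\gamma=\tfrac{\alpha-1}\alpha$, hence $\tfrac1\alpha+\gamma=1$ and $\tfrac1\alpha-1=-\gamma$, a direct computation yields $f_\beta(s)=s(1+b)+\delta\log s$ and $f_\beta'(s)=1+b\gamma+\delta/s$; in particular $f_\beta'(s)\to1+b\gamma>0$, which justifies the sign claim above. Consequently, \emph{uniformly} for $b$ in any fixed compact subset of $(0,+\infty)$,
\begin{equation*}
\alpha\,G(\beta,\rho)=\alpha\,s^{\frac1\alpha}(1+b)^{\frac1\alpha}+O\!\left(s^{-\gamma}\log s\right)\qquad\text{as }\rho\to0^+,
\end{equation*}
where the remainder absorbs both the effect of the term $\delta\log s$ inside $f_\beta$ and the whole second summand of $G$, which is of size $s^{-\gamma}$. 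Dividing $\beta=\alpha G(\beta,\rho)$ by $s^{1/\alpha}$ and using once more $\gamma+\tfrac1\alpha=1$ transforms the equation into
\begin{equation*}
b-\alpha(1+b)^{\frac1\alpha}=E(b,\rho),\qquad |E(b,\rho)|\le C\,\frac{\log l(\rho)}{l(\rho)},
\end{equation*}
with $E$ continuous in $b$ and the bound uniform on compact subsets of $(0,+\infty)$.

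The autonomous equation $g(b)=0$, where $g(b):=b-\alpha(1+b)^{1/\alpha}$, has a unique positive root $b_\ast$: indeed $g(0)=-\alpha<0$, $g(b)\to+\infty$ as $b\to+\infty$ (because $\tfrac1\alpha<1$), and $g'(b)=1-(1+b)^{\frac1\alpha-1}>0$ for $b>0$, so $g$ is strictly increasing and the root is simple. Hence $g(b_\ast/2)<0<g(2b_\ast)$, and as soon as $\rho$ is small enough that $\sup_{[b_\ast/2,\,2b_\ast]}|E(\cdot,\rho)|<\min\{-g(b_\ast/2),\,g(2b_\ast)\}$, the continuous map $b\mapsto g(b)-E(b,\rho)$ changes sign on $[b_\ast/2,2b_\ast]$ and therefore vanishes at some $b(\rho)\in(b_\ast/2,2b_\ast)$; this choice of threshold fixes $\rho_1\in(0,\rho_0)$. (Alternatively one may invoke the implicit function theorem, the transversality being guaranteed by $g'(b_\ast)>0$.) Setting $\beta(\rho):=b(\rho)\,l(\rho)^{1/\alpha}$ and then $A(\rho):=-\beta(\rho)/\alpha<0$, $B(\rho):=-u_{\beta(\rho)}'(\rho)/(2\rho)>0$ solves \eqref{parameters} by construction, and $b(\rho)\in(b_\ast/2,2b_\ast)$ yields $\tfrac1C\,l(\rho)^{1/\alpha}<\beta(\rho)\le C\,l(\rho)^{1/\alpha}$ for any fixed $C\ge\max\{2b_\ast,\,2/b_\ast\}$, after shrinking $\rho_1$ if needed. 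The only genuinely delicate step in this scheme is securing the remainder estimate for $G$ \emph{uniformly} in $b$ near $b_\ast$, so that the rescaled equation really does converge to its autonomous limit; the rest is one-variable calculus and bookkeeping with the exponents $\gamma$ and $\tfrac1\alpha$.
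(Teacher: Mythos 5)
Your argument is correct: the elimination of $A$ and $B$ via the second and third equations, the resulting scalar fixed-point equation $\beta=\alpha G(\beta,\rho)$, the rescaling $b=\beta\,l(\rho)^{-1/\alpha}$, the limiting equation $b=\alpha(1+b)^{1/\alpha}$ and its unique positive root, the error bound $O(l(\rho)^{-1}\log l(\rho))$, and the sign checks for $A$ and $B$ all hold up. The paper reaches the same limiting algebraic equation but packages it differently: it keeps the problem as a $2\times 2$ system in the rescaled unknowns $x=A\,l(\rho)^{-1/\alpha}$ and $y=B\rho^2 l(\rho)^{1-1/\alpha}$, written as $F(x,y,t)=0$ with $t=l(\rho)^{-1}$, and applies the two-dimensional implicit function theorem at the point $(x_0,y_0,0)$, where $x_0<0$ solves $(-x_0)^\alpha=1-\alpha x_0$ — which, under $b=-\alpha x_0$, is exactly your equation $g(b)=0$. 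Your route buys a more elementary conclusion step (a one-variable intermediate value argument with explicit bracketing $[b_*/2,2b_*]$, which also hands you the two-sided bound on $\beta(\rho)$ for free), at the cost of having to verify the uniform remainder estimate for $G$ on a compact $b$-interval; the paper's IFT route avoids that uniformity discussion but requires checking that $\partial F/\partial(x,y)$ is invertible at $(x_0,y_0,0)$ (which uses $x_0\neq -1$) and additionally yields continuity of $A(\rho),B(\rho),\beta(\rho)$ in $\rho$, a piece of information that is not actually needed later since $\rho$ is eventually fixed. Both proofs are sound; yours is a legitimate and somewhat more self-contained alternative.
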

	\begin{proof}
		We define
		$$
		F(x,y,t)=\bigg( x +y t +(1-\alpha x-\delta t \log t)^\frac{1}{\alpha}, 2y - \frac{1}{\alpha}(1-\alpha x-\delta t \log t)^{\frac{1}{\alpha}-1}(1-(\alpha-1)x + \delta t)\bigg).
		$$
		Since $\alpha>1$, there exists a unique $x_0<0$ such that $(-x_0)^\alpha =1-\alpha x_0$ (actually, we also have $x_0<-1$). If $y_0 =-\frac{x_0+(-x_0)^{2-\alpha}}{2\alpha}$, then $F(x_0,y_0,0)=0.$  Since $x_0\neq -1$, one can easily verify that $\frac{\partial F}{\partial (x,y)}(x_0,y_0,0)$ is invertible. By the implicit function theorem, we can find $T_1>0$ such that for any $t\in (0,T_1)$ there exist $x(t)<0$ and $y(t)>0$ such that 
		$F(x(t),y(t),t)=0$. Let $\rho_1= e^{-\frac{1}{T_1}}$, then for any $\rho\in (0,\rho_1)$ the functions $A(\rho):= x(l(\rho)^{-1}) l(\rho)^\frac{1}{\alpha}$, $B(\rho):= \rho^{-2} y(l(\rho)^{-1})l(\rho)^{\frac{1}{\alpha}-1}$, $\beta(\rho)=-\alpha A(\rho)$ satisfy \eqref{parameters}. Moreover, by construction we have 
		$$
		\beta(\rho) = -\alpha x(l(\rho)^{-1}) l(\rho)^\frac{1}{\alpha} = -\alpha x_0 l(\rho)^\frac{1}{\alpha} (1+o(1))
		$$
		as $\rho \to 0$. 
	\end{proof}
	
	Next we consider the function $u_\beta$ with $\beta = \beta(\rho)$ and a sufficiently small $\rho$. 
	
	\begin{lem}
		For any small  $\rho\in (0,\rho_1)$ we have $\Delta^2 u_{\beta(\rho)} >0$ in $B_\rho(0)\setminus \{0\}$. 
	\end{lem}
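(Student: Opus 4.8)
The plan is to feed the expansion of $\Delta^2u_{\beta(\rho)}$ furnished by Lemma~\ref{Bilaplacian} into a term-by-term estimate and to show that the summand carrying $h_2$ is positive and dominates the other three. Throughout I would set $L:=l(r)=\log\frac1r$ and $L_\rho:=l(\rho)$, so that $r\in B_\rho(0)\setminus\{0\}$ forces $L\in(L_\rho,+\infty)$, with $L_\rho\to+\infty$ as $\rho\to0$. The two facts that make everything work are the arithmetic identity $\frac1\alpha+\gamma=1$ (equivalently $\gamma-1=-\frac1\alpha$) and the quantitative estimate $\beta(\rho)\simeq L_\rho^{1/\alpha}$ from Lemma~\ref{Lemma2}: combined, they give $\beta(\rho)L^\gamma\ge\beta(\rho)L_\rho^\gamma\simeq L_\rho^{1/\alpha+\gamma}=L_\rho\to+\infty$, uniformly in $L\ge L_\rho$. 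This uniform lower bound on $\beta L^\gamma$ is what lets one absorb every term carrying the coefficient $\delta$.

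First I would record the relevant uniform asymptotics of $f_{\beta(\rho)}$ and of its derivatives at $L\ge L_\rho$ (writing $\beta=\beta(\rho)$): one has $f_\beta(L)\simeq L$, hence $u_\beta^{1-i\alpha}=f_\beta(L)^{1/\alpha-i}\simeq L^{1/\alpha-i}$ for $i=1,\dots,4$; moreover $f_\beta'(L)$ stays bounded and $f_\beta'(L)\ge1-o(1)>0$ (because $\beta\gamma L^{\gamma-1}=\beta\gamma L^{-1/\alpha}$ is bounded by $\beta L_\rho^{-1/\alpha}\simeq1$), while $f_\beta''(L)=\beta\gamma(\gamma-1)L^{\gamma-2}(1+o(1))<0$ and more generally $|f_\beta^{(j)}(L)|\simeq\beta L^{\gamma-j}$ for $j=2,3,4$, the lower-order pieces carrying $\delta$ being negligible precisely because $\beta L^\gamma\gg1$.

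Next I would estimate the four terms $T_i:=i!\binom{1/\alpha}{i}u_\beta^{1-i\alpha}h_i$, using that $\frac1\alpha\in(\frac12,1)$ forces the sign pattern $\binom{1/\alpha}{1}>0$, $\binom{1/\alpha}{2}<0$, $\binom{1/\alpha}{3}>0$, $\binom{1/\alpha}{4}<0$. In $h_2=4f_\beta'''f_\beta'-4(f_\beta')^2+3(f_\beta'')^2$ the first summand is $O(\beta L^{\gamma-3})$ and the third is $O(\beta^2L^{2\gamma-4})$, both $\to0$ uniformly by the exponent identity and $\beta\simeq L_\rho^{1/\alpha}$, whereas $-4(f_\beta')^2\le-4+o(1)$; hence $h_2<0$ with $|h_2|\simeq1$, so $T_2>0$ and $T_2\simeq L^{1/\alpha-2}$. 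In $h_1=f_\beta''''-4f_\beta''$ the leading part is $-4f_\beta''=4\beta\gamma(1-\gamma)L^{\gamma-2}(1+o(1))>0$, the remaining pieces being $O(\beta L^{\gamma-4}+L^{-2}+L^{-4})\ll\beta L^{\gamma-2}$, so $h_1>0$ and hence $T_1>0$ as well. For the last two terms, $|h_3|=6(f_\beta')^2|f_\beta''|\simeq\beta L^{\gamma-2}$ gives $|T_3|\simeq L^{1/\alpha-3}\beta L^{\gamma-2}=\beta L^{-4}$ and thus $|T_3|/T_2\simeq\beta L^{-2-1/\alpha}\le\beta L_\rho^{-2-1/\alpha}\simeq L_\rho^{-2}\to0$; and $|h_4|=(f_\beta')^4\simeq1$ gives $|T_4|\simeq L^{1/\alpha-4}$, whence $|T_4|/T_2\simeq L^{-2}\le L_\rho^{-2}\to0$. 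Consequently, for $\rho$ small enough and every $r\in(0,\rho)$,
\[
r^4\,\Delta^2u_{\beta(\rho)}=T_1+T_2+T_3+T_4\ \ge\ T_2-|T_3|-|T_4|\ \ge\ \tfrac12\,T_2\ >\ 0,
\]
which proves $\Delta^2u_{\beta(\rho)}>0$ in $B_\rho(0)\setminus\{0\}$.

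The main obstacle is the uniformity: each of the remainder bounds above must hold simultaneously for all $L\in(L_\rho,+\infty)$ while the parameter $\beta=\beta(\rho)$ itself grows like $L_\rho^{1/\alpha}$, so $\beta$ cannot be treated as a fixed constant. This is exactly where the two-sided estimate on $\beta(\rho)$ from Lemma~\ref{Lemma2} is indispensable, together with the bookkeeping of the exponents ($\gamma-1=-\frac1\alpha$, $\frac1\alpha+\gamma=1$, $\frac1\alpha+\gamma-3=-2$, $\frac1\alpha+\gamma-5=-4$); once these are in place, monotonicity in $L$ of the various powers of $L$ reduces every uniform bound to its value at the endpoint $L=L_\rho$, which then tends to $0$ (or to $+\infty$, for $\beta L^\gamma$) as $\rho\to0$.
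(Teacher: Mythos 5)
Your proof is correct and follows essentially the same route as the paper: insert the expansion of Lemma \ref{Bilaplacian}, use the two-sided bound $\beta(\rho)\simeq l(\rho)^{1/\alpha}$ from Lemma \ref{Lemma2} together with the exponent identity $\tfrac1\alpha+\gamma=1$ to get uniform estimates on $f'_{\beta(\rho)},\dots,f''''_{\beta(\rho)}$, and conclude that the $i=2$ term, of size $u_{\beta(\rho)}^{1-2\alpha}$, is positive and dominates. The only (harmless) difference is that you show $h_1>0$ outright, whereas the paper merely bounds $h_1\geq -C/(l(\rho)l(r))$ and absorbs the corresponding term as an $O(l(\rho)^{-1})$ correction.
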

	\begin{proof} 
		For any fixed $\rho\in (0,\rho_1)$, let $h_i(r)$, $i=1,...,4$ be as in the Lemma \ref{Bilaplacian} with $\beta = \beta(\rho)$.  Note that, if $\rho$ is sufficiently small,  we have 
		\begin{equation}\label{f'below}
		f'_{\beta(\rho)}(l(r))= 1+ \beta(\rho)\gamma l(r)^{\gamma-1}  + \delta l(r)^{-1} \ge 1+\delta l(r)^{-1}\ge \frac{1}{2},
		\end{equation}
		and 
		\begin{equation}\label{f'above}
		f'_{\beta(\rho)}(l(r)) = 1+ \beta(\rho)\gamma l(r)^{\gamma-1}  + \delta l(r)^{-1} \le 1+ \gamma C \frac{l(r)^{\gamma-1}}{l(\rho)^{\gamma-1}}\le 1+\gamma C,
		\end{equation}
		for any $r\in (0,\rho)$. Similarly, we get 
		\begin{equation}\label{f''}
		f''_{\beta(\rho)}(l(r))= \beta (\rho)\gamma(\gamma-1) l(r)^{\gamma-2} - \frac{\delta}{l(r)^2} \le -\frac{\delta}{l(r)^2}.
		\end{equation}
		and 
		\begin{equation}\label{f^(i)}
		|f^{(i)}_{\beta(\rho)}(l(r))| = \big|\beta(\rho)i! \binom{\gamma}{i} l(r)^{\gamma-i}  +  \frac{(-1)^{i-1}}{(i-1)!} l(r)^{-i}\big| \,\leq\, Cl(\rho)^{1-\gamma} l(r)^{\gamma-i} .
		\end{equation}
		Since $\delta<0$, using \eqref{f''} and  \eqref{f^(i)} we get 
		\begin{equation}\label{h1_below}
		h_1(r)\ge  \frac{4\delta}{l(r)^2}\left(1+O(l(\rho)^{-1})\right)\ge  - \frac{C}{l(\rho)l(r)},
		\end{equation}
		for some $C>0$. Similarly, \eqref{f'below}, \eqref{f'above} and \eqref{f^(i)} yield
		\begin{equation}\label{h2}
		h_2(r) \le -1 + O(l(\rho)^{-2}).
		\end{equation}
		Finally, by \eqref{f'below}-\eqref{f^(i)} we get 
		\begin{equation}\label{h34}
		h_3(r)= O(l(\rho)^{-1})\quad \text{and} \quad h_4(r)=O(1).
		\end{equation}
		Furthermore we have   
		\begin{equation}\label{bounds_u}
		\frac{1}{2} l(r)^\frac{1}{\alpha} \le (l(r) + \delta\log l(r))^\frac{1}{\alpha} \le u_{\beta(\rho)}(r)\le ((1+ C)l(\rho) + \delta\log l(r))^\frac{1}{\alpha} \le C l(r)^\frac{1}{\alpha}.
		\end{equation}
		Then, using \eqref{h1_below} and \eqref{bounds_u} we get
		\[\begin{split}
		\binom{\frac{1}{\alpha}}{1} u_{\beta(\rho)}(r)^{1-\alpha} h_1(r) &=  \frac{1}{\alpha} u_{\beta(\rho)}(r)^{1-\alpha} h_1(r) \ge -\frac{C}{\alpha} \frac{u_{\beta(\rho)}(r)^{1-\alpha}}{l(r)l(\rho)} \\
		&\ge -\frac{C}{\alpha} \frac{u_{\beta(\rho)}(r)^{1-2\alpha}u_{\beta(\rho)}(r)^\alpha}{l(r)l(\rho)}
		\ge -C \frac{u_{\beta(\rho)}(r)^{1-2\alpha}}{l(\rho)}.
		\end{split}
		\]
		Thanks to \eqref{h2} we find
		\[\begin{split}
		2\binom{\frac{1}{\alpha}}{2} u_{\beta(\rho)}(r)^{1-2 \alpha} h_2(r) &= \frac{1-\alpha}{\alpha^2} u_{\beta(\rho)}(r)^{1-2\alpha} h_2(r)\ge \frac{1-\alpha}{\alpha^2} u_{\beta(\rho)}(r)^{1-2\alpha} (-1 + O(l(\rho)^{-2})).\\
		\end{split}
		\]
		Similarly, \eqref{h34} yields
		\[\begin{split}
		3!\binom{\frac{1}{\alpha}}{3} u_{\beta(\rho)}(r)^{1-3 \alpha} h_3(r) &= \frac{(1-\alpha)(1-2\alpha)}{\alpha^3}  u_{\beta(\rho)}(r)^{1-3\alpha} h_3(r) = u_{\beta(\rho)}(r)^{1-2\alpha} O(l(\rho)^{-2})
		\end{split}
		\]
		and 
		\[\begin{split}
		4!\binom{\frac{1}{\alpha}}{4} u_{\beta(\rho)}(r)^{1-4 \alpha} h_4(r) &= \frac{(1-\alpha)(1-2\alpha)(1-3\alpha)}{\alpha^4}  u_{\beta(\rho)}(r)^{1-4\alpha} h_4(r) = u_{\beta(\rho)}(r)^{1-2\alpha} O(l(\rho)^{-2}). 
		\end{split}
		\]
		In conclusion we have 
		$$
		\Delta^2 u_{\beta(\rho)} \ge \frac{ u_{\beta(\rho)}(r)^{1-2\alpha}}{r^4}  \left(\frac{\alpha-1}{\alpha^2} + O(l(r)^{-1}) \right)\ge 0
		$$
		for any $r\in (0,\rho)$ if $\rho$ is chosen sufficiently small. 
	\end{proof}
	
	Now, we fix $\rho $ as in the previous lemma and we let 
	$$
	w(r):= 4^\frac{1}{\alpha} \left(u_{\beta(\rho)}(r)+A(\rho) + B(\rho)r^2\right).
	$$
	Clearly, we have that $w\in C^\infty(\overline {B_\rho(0)} \setminus\{0\}) \cap W^{3,p}(B_\rho(0))$ for any $1<p<\frac{4}{3}$ and that $ \Delta^2 w = 4^\frac{1}{\alpha} \Delta^2 u_{\beta(\rho)}>0$. Using Lemma \ref{Bilaplacian} one can easily check that 
	\[
	\Delta^2 u_{\beta(\rho)} = \frac{4(\alpha-1)}{\alpha^2r^4} l(r)^{\frac{1}{\alpha}-2}(1+o(1))
	\]
	as $r\to 0$, and in particular $\Delta^2 w \in L^1(B_\rho(0))$. Moreover, for $r=\rho$ by Lemma \ref{Lemma2} we infer that $w(\rho)=w'(\rho)=0$. Hence, $w$ can be represented by Green's formula and thus, by the positivity of the Green function, we get that $w>0$ in $B_\rho(0)$. Therefore, $w$ is a distributional solution of 
	$$
	\begin{cases}
	\Delta^2 w = a(x) e^{w^{\alpha}} & \; \text{ in }B_\rho(0), \\
	w = w_n =  0 & \; \text{ on } \partial B_\rho(0),
	\end{cases}
	$$
	where $a:=  4^\frac{1}{\alpha} e^{-w^\alpha } \Delta^2 u_{\beta(\rho)} $. By construction, $a\in C^\infty(\overline{B_\rho(0)}\setminus\{0\})$. We now prove that $a \in L^\infty(B_\rho(0))$ and, more precisely, that $a$ extends to a positive continuous function on $\overline{B_\rho(0)}$. First, note that by \eqref{bounds_u} and Lemma \ref{Lemma2} we have
	\[
	\begin{split}
	w^\alpha (r)= 4(u_{\beta(\rho)}(r)+ A +O(r^2))^\alpha  &= 4u_{\beta(\rho)}(r)^\alpha \bigg(1+ \frac{A}{u_{\beta(\rho)}(r)} + O(r^2l(r)^{-\frac{1}{\alpha}}) \bigg)^\alpha \\
	& = 4 u_{\beta(\rho)}^\alpha(r) + 4\alpha A u_{\beta(\rho)}^{\alpha-1}(r) + o(1)\\
	& = 4l(r)+4\beta(\rho)l(r)^\gamma + 4\delta\log l(r) -4 \beta(\rho) u_{\beta(\rho)}^{\alpha-1} (r)+o(1) \\
	&=  4l(r)+4\beta(\rho)l(r)^\gamma +4\delta\log l(r)-4\beta(\rho) l(r)^{\frac{\alpha-1}{\alpha}} + o(1),
	\end{split}
	\] 
	as $r\to 0$. Recalling now that $\gamma = \frac{\alpha-1}{\alpha}$, we get 
	\begin{equation}\label{w_in0}
	w^{\alpha} = 4l(r) + 4\delta\log l(r) +o(1)
	\end{equation}
	as $r\to 0$. Our claim is then proved as
	\[
	\begin{split}
	a(x) = 4^\frac{1}{\alpha} \Delta^2 u_{\beta(\rho)} e^{-w^{\alpha}}& = 4^{\frac{1}{\alpha}+1}\frac{\alpha-1}{\alpha^2 r^4} l(r)^{\frac{1}{\alpha}-2} r^4 l(r)^{-4\delta} (1+o(1))\\
	& = 4^{\frac{1}{\alpha}+1}\frac{\alpha-1}{\alpha^2} +o(1),
	\end{split}
	\]
	where we used that $4\delta=\frac{1}{\alpha}-2$. However, by \eqref{w_in0} one easily sees that $w\not\in L^\infty(B_\rho(0))$.

\section{An open problem}\label{SectionOP}

\indent We completely proved the uniform a-priori bound for solutions of problem \eqref{DIRf} only if the domain is a ball. However, as investigated in Section \ref{SectionDomains}, the blow-up technique may be applied independently of our domain, provided the energy control $\int_\Omega h(x,u)dx\leq\Lambda$ holds. It would be thus very interesting to find an alternative method to prove such estimate in a general domain. In particular, if the domain is still positivity preserving for $(-\Delta)^m$, we note that it would be sufficient to replace the moving-planes argument of Lemma \ref{MovingPlanes}. We believe that this should be possible in the context of small smooth deformations of the ball, which are the only known examples of positivity preserving domains in dimension $N\geq3$ (see \cite{GR}).

We note also that in the second-order context such $L^1$ bound has been recently proved in \cite{Batt} for simply connected planar domains. However, the techniques therein are peculiar to the two-dimensional setting, so they are not straightforwardly generalisable to the higher-order setting.

\section*{Appendix A: Green function estimates for Navier boundary conditions}

We are here interested in proving a counterpart of Lemma \ref{Greenspropetry} in the case of Navier boundary conditions. Firstly, decoupling \eqref{NAVf} as a system of second-order equations, one easily finds an explicit formula for the Green function $G_{NAV}$ and its derivatives,
\begin{equation}\label{GreenNav_i_def}
\nabla^i_xG_{NAV}(x,z)=\intOmega\nabla^i_x G_{-\Delta}(x,y) G_{-\Delta}(y,z)dy,
\end{equation}
in terms of $G_{-\Delta}$, the Green function for the Laplace operator with Dirichlet boundary conditions. In the case $i=0$, an estimate of the form \eqref{Greenspropetry_0} may be found in \cite{GS_nav}, see also \cite[Proposition 4.13]{GGS}. Here, inspired by some arguments therein and taking into account the respective estimates for $G_{-\Delta}$ (see \cite{Widman,Sweers}), we give a proof of the estimates for the derivatives of $G_{NAV}$. We remark that such result may also follow by \cite{Kraso} as a particular case of very general theory on kernels of elliptic operators; however, here we give a easier proof which does not require strong regularity assumptions on the boundary.
\begin{propA}
	Let $\Omega\subset\R^4$ be a bounded domain of class $C^{1,1}$ and let $G_{NAV}(x,y)$ be the Green function in $\Omega$ of the biharmonic operator subjected to Navier boundary conditions. There exists $C>0$ such that for all $x,y\in\Omega$, $x\neq y$, we have that
	\begin{equation*}
	|G_{NAV}(x,y)|\leq C\log\bigg(1+\dfrac{d_\Omega(x)d_\Omega(y)}{|x-y|^2}\bigg),
	\end{equation*}
	\begin{equation}\label{GreenNav_i}
	|\nabla_x^iG_{NAV}(x,y)|\leq \frac{C}{|x-y|^i}.
	\end{equation}
	with the convention that $\nabla^2f:=\Delta f$ and $\nabla^3f:=\nabla(\Delta f)$.
\end{propA}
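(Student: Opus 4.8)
The plan is to use the decoupling identity \eqref{GreenNav_i_def}, which expresses $G_{NAV}$ through two copies of the Dirichlet Green operator $G_{-\Delta}$ of $-\Delta$, and to feed into it the classical pointwise estimates for $G_{-\Delta}$ on $C^{1,1}$ domains. Recall from \cite{Widman,Sweers} that in $\R^4$ one has
\begin{equation*}
G_{-\Delta}(x,y)\simeq\frac{1}{|x-y|^2}\Big(1\wedge\frac{d_\Omega(x)\,d_\Omega(y)}{|x-y|^2}\Big)\qquad\text{and}\qquad|\nabla_xG_{-\Delta}(x,y)|\leq\frac{C}{|x-y|^3},
\end{equation*}
the $C^{1,1}$ regularity being exactly what makes the gradient bound valid up to $\dOmega$.

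First I would dispose of the cases $i=2$ and $i=3$, where the convention $\nabla^2:=\Delta$, $\nabla^3:=\nabla\Delta$ makes the argument immediate. If $u(x):=\intOmega G_{NAV}(x,z)g(z)\,dz$, then $u$ solves $\Delta^2u=g$ with Navier conditions, so $v:=\Delta u$ satisfies $-\Delta v=-g$ in $\Omega$ and $v=0$ on $\dOmega$, i.e. $v(x)=-\intOmega G_{-\Delta}(x,z)g(z)\,dz$. As $g$ is arbitrary, this gives the identity $\Delta_xG_{NAV}(x,z)=-G_{-\Delta}(x,z)$ for $x\neq z$ (equivalently, insert $\Delta_xG_{-\Delta}(x,y)=-\delta_y$ into \eqref{GreenNav_i_def}). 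Hence $|\nabla_x^2G_{NAV}(x,z)|=|G_{-\Delta}(x,z)|\leq C|x-z|^{-2}$ and $|\nabla_x^3G_{NAV}(x,z)|=|\nabla_xG_{-\Delta}(x,z)|\leq C|x-z|^{-3}$, which are the claimed bounds.

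Next, for $i=1$ the integral in \eqref{GreenNav_i_def} cannot be collapsed to a delta, so I would estimate it directly; differentiation under the integral sign is legitimate since $y\mapsto|x-y|^{-3}$ is locally integrable in $\R^4$. Using the two displayed bounds,
\begin{equation*}
|\nabla_xG_{NAV}(x,z)|\leq C\intOmega\frac{dy}{|x-y|^3\,|y-z|^2}.
\end{equation*}
Writing $\rho:=|x-z|$, I would split $\Omega$ into $\{|x-y|\leq\rho/2\}$, $\{|y-z|\leq\rho/2\}$, $\{\rho/2<|x-y|\leq4\rho\}$ and $\{|x-y|>4\rho\}$ (all intersected with $\Omega$): on each of the first three pieces one of the two factors is comparable to a power of $\rho$ while the other integrates against $\int_{B_s(0)}|w|^{-a}\,dw\simeq s^{4-a}$ for $a<4$, giving $\leq C\rho^{-1}$; on the last piece $|y-z|\simeq|x-y|$, so the integrand is bounded by $C|x-y|^{-5}$ and its integral over $\{|x-y|>4\rho\}$ is again $\leq C\rho^{-1}$. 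This yields $|\nabla_xG_{NAV}(x,z)|\leq C|x-z|^{-1}$. The remaining estimate $i=0$ follows by the same type of splitting, now carrying along the boundary factors $1\wedge d_\Omega(x)d_\Omega(y)|x-y|^{-2}$ from the two-sided bound of $G_{-\Delta}$; the weaker form $|G_{NAV}(x,y)|\leq C\log(2+|x-y|^{-1})$ is already proved in \cite{GS_nav} and \cite[Proposition 4.13]{GGS}, and the refinement with $d_\Omega(x)d_\Omega(y)$ comes out along the same lines.

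The routine but slightly delicate point, which I expect to be the main obstacle, is the convolution estimate for $i=1$ (and, in the sharp form, for $i=0$): the decomposition of $\Omega$ must be organized so that \emph{every} contribution — including the one far from both $x$ and $z$, harmless because $\Omega$ is bounded, and the mild singularities at $y=x$ and $y=z$ — is controlled by $C|x-z|^{-i}$ with a uniform constant. A secondary point is to check that the $C^{1,1}$ hypothesis on $\dOmega$ really suffices to invoke the pointwise gradient estimate for $G_{-\Delta}$ up to the boundary, and to justify the distributional identity $\Delta_xG_{NAV}=-G_{-\Delta}$; since we only claim the non-sharp bounds $C/|x-y|^i$, the degeneracy of the Green estimates near $\dOmega$ plays no role.
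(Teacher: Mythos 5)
Your proposal is correct and follows essentially the same route as the paper: both rest on the composition formula \eqref{GreenNav_i_def}, the classical pointwise bounds for $G_{-\Delta}$ in $\R^4$, and a splitting of the $y$-integral into regions near $x$, near $z$, and away from both, with the $i=0$ case deferred to \cite{GS_nav} and \cite[Proposition 4.13]{GGS} exactly as in the paper. Your execution is in fact slightly cleaner in two spots: you make the $i=2,3$ cases explicit via $\Delta_xG_{NAV}=-G_{-\Delta}$ (the paper only asserts they "easily follow"), and for $i=1$ you may legitimately discard the boundary weight factors, since $\int_{\R^4}|x-y|^{-3}|y-z|^{-2}\,dy\simeq|x-z|^{-1}$ already (the exponents sum to $5>4$), which removes the need for the paper's case distinction between $|x-z|^2\leq d_\Omega(z)$ and $|x-z|^2>d_\Omega(z)$ in the outer region.
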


\begin{proof}
	We start proving \eqref{GreenNav_i} for $i=1$. Indeed, the estimates for $i=2,3$ easily follow by means of the decomposition into coupled systems and the estimates for $G_{-\Delta}$. In the sequel, we strictly follow \cite{GS_nav}. Fix $x,z\in\Omega$ and define $\mathcal{O}_x:=B_{\frac23|x-z|}(x)\cap\Omega$, similarly $\mathcal{O}_z:=B_{\frac23|x-z|}(z)\cap\Omega$ and let then $\mathcal{R}:=\Omega\setminus(\mathcal{O}_x\cup\mathcal{O}_z)$.
	Applying the pointwise estimates for $G_{-\Delta}$ in dimension $4$ (see \cite[\S\,3.2]{Sweers}) and \eqref{GreenNav_i_def} we get
	\begin{equation*}
	|\nabla^i_xG_{NAV}(x,z)|\preceq\intOmega \frac{ \big(1\wedge\tfrac{d(y)}{|x-y|}\big)\big(1\wedge\tfrac{d(y)d(z)}{|y-z|^2}\big)}{|x-y|^3|y-z|^2}dy=:\intOmega Q(x,y,z)dy.
	\end{equation*}
	It is easy to show that on $\Ox$ we have $|y-z|\sim|x-z|$ and on $\Oz$ there holds $|x-y|\sim|x-z|$. Hence,
	 \begin{equation}\label{Ox}
	 \begin{split}
		 \int_{\Ox} Q(x,y,z)dy &\preceq\int_{\Ox}|x-y|^{-3}|y-z|^{-2}dy\preceq|x-z|^{-2}\int_{\Ox}|x-y|^{-3}dy\\
		 &\preceq|x-z|^{-2}\int_0^{\frac23|x-z|}d\rho\preceq\frac{1}{|x-z|}.
	 \end{split}
	 \end{equation}
	 and an analogous estimate for $Q(x,\cdot,z)$ on $\Oz$ holds.\\
	 On $\OR$ there holds $|y-z|\sim|x-y|$. In fact, on one hand $|y-z|\geq\frac23|x-y|$, so $|x-y|\leq|x-z|+|z-y|\leq\frac52|y-z|$, and, on the other, $|y-x|\geq\frac23|x-z|$, implying $|y-z|\leq|x-y|$. Furthermore, the following relation holds (see \cite[Lemma 3.2]{GS_nav2} as well as \cite[Lemma 4.5]{GGS}):
	 \begin{equation*}
	 \bigg(1\wedge\dfrac{d(y)d(z)}{|y-z|^2}\bigg)\sim\bigg(1\wedge\dfrac{d(y)}{|y-z|}\bigg)\bigg(1\wedge\dfrac{d(z)}{|y-z|}\bigg)\leq 1\wedge\dfrac{d(z)}{|y-z|}.
	 \end{equation*}
	 As a result, we get
	 \begin{equation}\label{OR_1}
	 \begin{split}
	 \int_{\OR} Q(x,y,z)dy &\preceq\int_{\OR}|x-y|^{-5}\bigg(1\wedge\dfrac{d(y)}{|x-y|}\bigg)\bigg(1\wedge\dfrac{d(z)}{|y-z|}\bigg)\\
	 &\preceq\int_{\Ox^c}|x-y|^{-5}\bigg(1\wedge\dfrac{1}{|x-y|}\bigg)\bigg(1\wedge\dfrac{d(z)}{|x-y|}\bigg).
	 \end{split}
	 \end{equation}
	 To estimate \eqref{OR_1} we distinguish two cases depending on the reciprocal distance of $x$ and $z$ compared with their distance from the boundary. We denote by $D$ a sufficiently large radius so that $\OR\subset B_D(x)\setminus\Ox$.\\
	 \textbf{Case $|x-z|^2\leq d(z)$}. If so, one has $\frac23|x-z|\leq\sqrt{d(z)}$ and thus we continue \eqref{OR_1} as follows:
	 \begin{equation}\label{OR_2}
	 \begin{split}
	 \int_{\OR} Q(x,y,z)dy&\preceq\int_{\frac23|x-z|}^D\rho^{-5}\bigg(1\wedge\dfrac{1}{\rho}\bigg)\bigg(1\wedge\dfrac{d(z)}{\rho}\bigg)\rho^3d\rho\\
	 &\preceq\int_{\frac23|x-z|}^{\sqrt{d(z)}}\rho^{-2}d\rho+d(z)\int_{\sqrt{d(z)}}^D\rho^{-4}d\rho\\
	 &\preceq\frac 1{|x-z|}+\dfrac1{d(z)}\preceq\frac 1{|x-z|}.
	 \end{split}
	 \end{equation}
	 \textbf{Case $|x-z|^2> d(z)$}. Then,
	 \begin{equation}\label{OR_3}
	 \int_{\OR} Q(x,y,z)dy\preceq d(z)\int_{\frac23|x-z|}^D\rho^{-4}d\rho\preceq \frac{d(z)}{|x-z|^3}<\frac 1{|x-z|}.
	 \end{equation}
	 The result is obtained combining estimates \eqref{Ox}, the analogue in $\Oz$, \eqref{OR_2} and \eqref{OR_3}.
	\end{proof}


\begin{thebibliography}{ABCDE}
	\bibitem[Ad]{Adams} Adams D. 
	{\em A sharp inequality of J. Moser for higher order derivatives}, Ann. of Math. (2) 128 (1988), no. 2, 385-398.

	
	\bibitem[ARS]{ARS} Adimurthi, Robert F., Struwe M.
	{\em Concentration phenomena for Liouville's equation in dimension four.} J. Eur. Math. Soc. (JEMS) 8 (2006), no. 2, 171-180. 
	
	\bibitem[Ba]{Batt} Battaglia L. {\em Uniform bounds for solutions to elliptic problems on simply connected planar domains.} arXiv:1809.05684
	
	\bibitem[BGW]{BGW} Berchio E., Gazzola F., Weth T.
	{\em Radial symmetry of positive solutions to nonlinear polyharmonic Dirichlet problems.} J. Reine Angew. Math. 620 (2008), 165-183.
	
	\bibitem[BM]{BM} Brezis H., Merle F.
	{\em Uniform estimates and blow-up behavior for solutions of $-\Delta u=V(x)e^u$ in two dimensions.}	Comm. Partial Differential Equations 16 (1991), no. 8-9, 1223-1253.
	
	
	\bibitem[BT]{BT} Brezis H., Turner R.E.L.
	{\em On a class of superlinear elliptic problems.} Comm. Partial Differential Equations 2 (1977), no. 6, 601-614. 
	
	\bibitem[CL]{ChenLi} Chen W.X., Li C.
	{\em A priori estimates for solutions to nonlinear elliptic equations.}	Arch. Rational Mech. Anal. 122 (1993), no. 2, 145-157. 
	
	\bibitem[CS]{CS} Cl\'{e}ment P., Sweers G.
	{\em Uniform anti-maximum principle for polyharmonic boundary value problems.} Proc. Amer. Math. Soc. 129 (2001), no. 2, 467-474. 
	
	\bibitem[DS]{DAS} Dall'Acqua A., Sweers G.
	{\em Estimates for Green function and Poisson kernels of higher-order Dirichlet boundary value problems.} J. Differential Equations 205 (2004), no. 2, 466-487.
	
	\bibitem[dF1]{dF} de Figueiredo D.G.
	{\em Positive Solutions of Semilinear Ellpitic Equations.} Differential equations (S\~{a}o Paulo, 1981), pp. 34-87, Lecture Notes in Math., 957, Springer, 1982.
	
	\bibitem[dF2]{dF_monot} de Figueiredo D.G.
	{\em Monotonicity and symmetry of solutions of elliptic systems in general domains.} NoDEA Nonlinear Differential Equations Appl. 1 (1994), no. 2, 119-123.
	
	\bibitem[dFLN]{dFLN} de Figueiredo D.G., Lions P.-L., Nussbaum R.D.
	{\em A priori estimates and existence of positive solutions of semilinear elliptic equations.}	J. Math. Pures Appl. (9) 61 (1982), no. 1, 41-63. 
	
	\bibitem[dFdOR]{dFdOR} de Figueiredo D., do \'{O} J.M., Ruf B.
	{\em Semilinear elliptic systems with exponential nonlinearities in two dimensions.} Adv. Nonlinear Stud. 6 (2006), no. 2, 199-213.
	
	\bibitem[dFdOR2]{dFdOR2} de Figueiredo D., do \'{O} J.M., Ruf B.
	{\em Non-variational elliptic systems in dimension two: a priori bounds and existence of positive solutions.} J. Fixed Point Theory Appl. 4 (2008), no. 1, 77-96.
	
	\bibitem[DST]{DST} Dur\'{a}n R. G., Sanmartino M., Toschi M.
	{\em On the existence of bounded solutions for a nonlinear elliptic system.} Ann. Mat. Pura Appl. (4) 191 (2012), no. 4, 771-782.
	
	\bibitem[FGW]{FGW} Ferrero A., Gazzola F., Weth T.
	{\em Positivity, symmetry and uniqueness for minimizers of second-order Sobolev inequalities.}	Ann. Mat. Pura Appl. (4) 186 (2007), no. 4, 565-578. 
	
	\bibitem[GGS]{GGS} Gazzola F., Grunau H.-C., Sweers G.
	{\em Polyharmonic boundary value problems.} Springer Lecture Notes in Mathematics n. 1991, 2010. xviii+423 pp.
	
	\bibitem[GiSp]{GS} Gidas B., Spruck J.
	{\em A priori bounds for positive solutions of nonlinear elliptic equations}, Comm. Partial Differential Equations 6 (1981), no. 8, 883-901.
	
	\bibitem[GR1]{GR} Grunau H.-C., Robert F.
	{\em Positivity and almost positivity of biharmonic Green's functions under Dirichlet boundary conditions.} Arch. Ration. Mech. Anal. 195 (2010), no. 3, 865-898.
	
	\bibitem[GR2]{GRholes} Grunau H.-C., Robert F.
	{\em Uniform estimates for polyharmonic Green functions in domains with small holes.} Recent trends in nonlinear partial differential equations II: Stationary problems, Contemp. Math. 595 (2013), 263-272.
	
	\bibitem[GS1]{GS_nav2} Grunau H.-C., Sweers G.
	{\em Positivity for equations involving polyharmonic operators with Dirichlet boundary conditions.} Math. Ann. 307 (1997), no. 4, 589-626. 
	
	\bibitem[GS2]{GS_pp} Grunau H.-C., Sweers G.
	{\em Positivity properties of elliptic boundary value problems of higher order.} Proceedings of the Second World Congress of Nonlinear Analysts, Part 8 (Athens, 1996). Nonlinear Anal. 30 (1997), no. 8, 5251-5258.
	
	\bibitem[GS3]{GS_nav} Grunau H.-C., Sweers G.
	{\em Sharp estimates for iterated Green functions.}	Proc. Roy. Soc. Edinburgh Sect. A 132 (2002), no. 1, 91-120.
	
	\bibitem[JY]{JY} Jevnikar A., Yang W.
	{\em Analytic aspects of the Tzitz\'{e}ica equation: blow-up analysis and existence results.}  Calc. Var. Partial Differential Equations 56 (2017), no. 2, Art. 43, 38 pp.
	
	\bibitem[Kr]{Kraso} Krasovski\u{\i} J.P.
	{\em Investigation of potentials associated with boundary problems for elliptic equations.} Izv. Akad. Nauk SSSR Ser. Mat. 31 (1967) 587-640 (in Russian). English transl., Math. USSR Izv. 1 (1967) 569-622.
	
	\bibitem[La]{Lakkis} Lakkis, O.
	{\em Existence of solutions for a class of semilinear polyharmonic equations with critical exponential growth.}
	Adv. Differential Equations 4 (1999), no. 6, 877-906. 
	
	\bibitem[LL]{LamLu} Lam N., Lu G.
	{\em Existence of nontrivial solutions to polyharmonic equations with subcritical and critical exponential growth.} Discrete Contin. Dyn. Syst. 32 (2012), no. 6, 2187-2205. 
	
	\bibitem[LRU]{LRU} Lorca S., Ruf B., Ubilla P.
	{\em A priori bounds for superlinear problems involving the N-Laplacian.} J. Differential Equations 246 (2009), no. 5, 2039-2054.
	
	\bibitem[L]{Lin} Lin C.-S.
	{\em A classification of solutions of a conformally invariant fourth order equation in $R^n$.} Comment. Math. Helv. 73 (1998), no. 2, 206-231.
	
	\bibitem[LW]{LinWei} Lin C.-S., Wei J.-C.
	{\em Locating the peaks of solutions via the maximum principle. II. A local version of the method of moving planes.} Comm. Pure Appl. Math. 56 (2003), no. 6, 784-809.	

	\bibitem[Ma1]{Mart} Martinazzi L.
	{\em Classification of solutions to the higher order Liouville's equation on $\R^{2m}$}. Math. Z. 263 (2009), no. 2, 307-329.
	
	\bibitem[Ma2]{Mart_cc} Martinazzi L.
	{\em Concentration-compactness phenomena in the higher order Liouville's equation}.	J. Funct. Anal. 256 (2009), no. 11, 3743-3771. 
	
	\bibitem[MP]{MP} Martinazzi L., Petrache M.
	{\em Asymptotics and quantization for a mean-field equation of higher order.} Comm. Partial Differential Equations 35 (2010), no. 3, 443-464.
	
	\bibitem[Mi]{Mit} Mitidieri E.
	{\em A Rellich type identity and applications.} Comm. Partial Differential Equations 18 (1993), no. 1-2.
	
	\bibitem[Os]{Osw} Oswald P.
	{\em On a priori estimates for positive solutions of a semilinear biharmonic equation in a ball.} 
	Comment. Math. Univ. Carolin. 26 (1985), no. 3, 565-577.
	
	\bibitem[Pa]{Pass} Passalacqua T.
	{\em Some applications of functional inequalities to semilinear elliptic equations} Ph.D. Thesis, Universit\`{a} degli Studi di Milano (2015).
	
	\bibitem[Po]{Ponce} Ponce A.C.
	{\em Elliptic PDEs, Measures and Capacities. From the Poisson equation to Nonlinear Thomas-Fermi problems}. EMS Tracts in Mathematics. European Mathematical Society (EMS), Z\"{u}rich, 2016. 463 pp.
	
	\bibitem[RW]{RW} Reichel W., Weth T.
	{\em A priori bounds and a Liouville theorem on a half-space for higher-order elliptic Dirichlet problems.}  Math. Z. 261 (2009), no. 4, 805-827.
	
	\bibitem[RoWe]{RobWei} Robert F., Wei J.-C.
	{\em Asymptotic behavior of a fourth order mean field equation with Dirichlet boundary condition.} Indiana Univ. Math. J. 57 (2008), no. 5, 2039-2060.
	
	\bibitem[Rom]{MIO_N} Romani, G.
	{\em A-priori bounds for a quasilinear problem in critical dimension}, arXiv:1802.05777.
	
	\bibitem[Sor]{Soranzo} Soranzo R.
	{\em A priori estimates and existence of positive solutions of a superlinear polyharmonic equation.} Dynam. Systems Appl. 3 (1994), no. 4, 465-487.
	
	\bibitem[Sou]{Soup} Souplet, P.
	{\em Optimal regularity conditions for elliptic problems via $\Lp_\delta$-spaces.} Duke Math. J. 127 (2005), no. 1, 175-192. 
	
	\bibitem[Sw]{Sweers} Sweers G.
	{\em Positivity for a strongly coupled elliptic system by Green function estimates.} J. Geom. Anal. 4 (1994), no. 1, 121-142.
	
	\bibitem[Wi]{Widman} Widman K.-O.
	{\em Inequalities for the Green function and boundary continuity of the gradient of solutions of elliptic differential equations} Math. Scand. 21, 1967, 17-37.

\end{thebibliography}
\end{document}